\newcommand\barrow{\textstyle\mathop{\rightarrow}_{}^{\hspace{-8pt}\bullet}}
\newcommand\arrowb{\textstyle\mathop{\rightarrow}_{\hspace{-8pt}\bullet}^{}}
\newcommand\carrow{\textstyle\mathop{\rightarrow}_{}^{\hspace{-8pt}\circ}}
\newcommand\arrowc{\textstyle\mathop{\rightarrow}_{\hspace{-8pt}\circ}^{}}
\newcommand\barrowc{\textstyle\mathop{\rightarrow}_{\hspace{-8pt}\circ}^{\hspace{-8pt}\bullet}}
\newcommand\carrowc{\textstyle\mathop{\rightarrow}_{\hspace{-8pt}\circ}^{\hspace{-8pt}\circ}}
\newcommand\carrowb{\textstyle\mathop{\rightarrow}_{\hspace{-8pt}\bullet}^{\hspace{-8pt}\circ}}
\newtheorem{thm}{Theorem}[section]
\newaliascnt{prp}{thm}
\newtheorem{prp}[prp]{Proposition}
\newaliascnt{cor}{thm}
\newtheorem{cor}[cor]{Corollary}
\newaliascnt{lem}{thm}
\theoremstyle{definition}
\newaliascnt{dfn}{thm}
\newtheorem{dfn}[dfn]{Definition}
\newaliascnt{xpl}{thm}
\newtheorem{xpl}[xpl]{Example}
\numberwithin{equation}{section}
\author{Tristan Bice}
\address{Institute of Mathematics of the Polish Academy of Sciences\\
Warsaw\\
Poland}
\email{Tristan.Bice@gmail.com}
\thanks{This research has been supported by IMPAN (Poland).}
\keywords{domain, distance, hemimetric, quasimetric, order, topology, complete}
\subjclass[2010]{06A06, 18A35, 54E50, 54E55}
\title{Distance Domains: Completeness}
\begin{document}

\renewcommand{\bf}{\bfseries}
\renewcommand{\sc}{\scshape}
\vspace{0.5in}

\begin{abstract}
We explore extensions of domain theoretic concepts, replacing transitive relations with general non-symmetric distances.  These lead to a generalization of Smyth completeness which we characterize in various ways analogous to our previous Yoneda completeness characterizations.
\end{abstract}

\maketitle

\section*{\bf Motivation}

A number of works have extended domain theory \textendash\, see \cite{GierzHofmannKeimelLawsonMisloveScott2003} \textendash\, from posets to more metric-like structures.  However, both the classical theory and these generalizations tend to focus on just one aspect of the dual nature of domains.  Our primary goal is explore the other aspect.

More precisely, the standard approach to domain theory is to start with a partial order $\leq$ and then define its way-below relation $\ll$, a transitive but generally non-reflexive relation.  An alternative approach is to start with a transitive relation $\ll$ and then define its lower order $\leq$.  Using maxima rather than suprema, one also obtains dual notions of completeness and continuity for $\ll$.  This is the approach we generalize, working with a general non-symmetric distance $\mathbf{d}$ and its lower hemimetric $\underline{\mathbf{d}}$.

Also, previous works have developed quantitative domain theory in a highly category or fuzzy theoretic way \textendash\, see e.g. \cite{HofmannWaszkiewicz2011} and \cite{LiRao2013}.  Another goal of our paper is to provide a more classic approach through topology, metric and order theory, building on \cite{Goubault2013}.  This leads to certain natural generalizations and should also be more accessible to analysts.

In particular, we have two examples in mind from non-commutative topology.  First, consider the hereditary C*-subalgebras $\mathcal{H}(A)$ of a C*-algebra $A$, ordered by inclusion $\subseteq$.  When $A$ is commutative, these correspond to the open subsets of a locally compact Hausdorff topological space, a well-known example of a classical domain.  However, $\mathcal{H}(A)$ may fail to be a domain in general, even for basic non-commutative C*-algebras like $C([0,1],M_2)(=$continuous functions from the unit interval to two by two complex matrices$)$.  The key observation here is that $\mathcal{H}(A)$ does, however, always form a distance domain when we replace the inclusion ordering $\subseteq$ with the Hausdorff distance $\underline{\mathbf{d}}$ on the positive unit balls $B^1_+$,
\[\underline{\mathbf{d}}(B,C)=\sup_{b\in B^1_+}\inf_{c\in C^1_+}\|b-bc\|.\]
Here the way-below distance $\mathbf{d}$ comes from the reverse Hausdorff distance
\[\mathbf{d}(B,C)=\inf_{c\in C^1_+}\sup_{b\in B^1_+}\|b-bc\|.\]
Incidentally, $(b,c)\mapsto\|b-bc\|$ is itself a natural example of a non-hemimetric distance on $A^1_+$ \textendash\, see \cite[Proposition 2.3]{BiceVignati2018}.

There can also be merit in quantifying classical domains, e.g. consider the lower semicontinuous [0,1]-valued functions $LSC(X,[0,1])$ on some compact Hausdorff $X$ with the pointwise ordering $\leq$.  This is another well-known example of a classical domain \textendash\, see \cite[Example I-1.22]{GierzHofmannKeimelLawsonMisloveScott2003}.  But when we replace $\leq$ with 
\[\underline{\mathbf{d}}(f,g)=\sup_{x\in X}(f(x)-g(x))_+,\]
we get an even nicer structure.  Specifically $LSC(X,[0,1])$ becomes an \emph{algebraic} domain, in an appropriate quantitative sense, where the finite/compact elements \textendash\, see \cite[Definition 7.4.56]{Goubault2013} \textendash\, are precisely the continuous functions $C(X,[0,1])$ (by a slight generalization of Dini's theorem).  Moreover, this extends to the lower semicontinuous elements of $A^{**1}_+$ for a much larger class of ordered Banach spaces $A$ \textendash\, see \cite{Bice2016c}.

Apart from the inherent interest in generalization, we feel examples like this justify the study of distance domains.  So from now on we put functional analysis to one side to develop a general domain theory for non-symmetric distances.

\section*{\bf Outline}

While category theory is not our focus, we do consider one very elementary category $\mathbf{GRel}$ of generalized relations.  Indeed, throughout we make use of various interpolation assumptions which are concisely described by composition $\circ$ in $\mathbf{GRel}$. In \autoref{TheCat}, we describe the basic properties of $\mathbf{GRel}$ and set out much of the notation used throughout.  Note our functions take values in $[0,\infty]$, rather than the more general quantales often considered elsewhere.  This is primarily to reduce the notational burden, which is already quite heavy due to the various topologies, relations and operations we need to consider.  In any case, $[0,\infty]$ valued functions are perfectly suited to the analytic examples we have in mind.

As mentioned above, one of our primary goals is to generalize previous work on hemimetrics to distances, functions merely satisfying the triangle inequality.  This generalization is crucial because we want to develop a dual theory of distance domains starting from distance analogs of the way-below relation.  In \autoref{Distances} we discuss these distances $\mathbf{d}$ and their associated upper and lower hemimetrics $\overline{\mathbf{d}}$ and $\underline{\mathbf{d}}$.

Next, in \autoref{Uniform} we breifly introduce the uniform preorder $\precapprox$ and equivalence relation $\approx$ on generalized relations.  This generalizes the usual uniform equivalence of metrics and is needed to describe weak interpolation assumptions required for the best results (e.g. see \autoref{hemi<d}).

In \autoref{B}, we introduce balls and their associated topologies.  In particular, we show how balls characterize upper and lower hemimetrics and how the preorders $\leq^{\overline{\mathbf{d}}}$ and $\leq^{\underline{\mathbf{d}}}$ defined from $\mathbf{d}$ coincide with the specialization preorders of ball topologies.

As we deal with non-hemimetric distances, it is natural to consider a certain strict version $<^\mathbf{d}$ of $\leq^\mathbf{d}$, which we discuss in \autoref{TSO}.  This will be particularly important in our future work when we exhibit equivalences between distance domains and classical domains of formal balls.  As a preliminary to this, here we investigate the relationship between $\underline{<^\mathbf{d}}$ and $\leq^{\underline{\mathbf{d}}}$ under certain interpolation assumptions.

In \autoref{N}, we make some elementary observations on nets and their limits.  This leads to \autoref{CN}, where we discuss two natural generalizations of Cauchy nets.  Note here, as elsewhere, basic properties of hemimetrics can often be extended to distances by replacing $\mathbf{d}$ with $\overline{\mathbf{d}}$ and $\underline{\mathbf{d}}$ where appropriate.

We also aim to develop the theory in a more topological way.  The key here is to consider topologies generated by open holes as well as balls.  In \autoref{Topology} we characterize convergence in combinations of ball and hole topologies.

Yet another one of our goals is to explore the connection between topological and relational extensions of metric and order theoretic concepts.
\begin{figure}[H]
\caption{Metric vs Order Analogs}\label{Dualities}
\vspace{5pt}\begin{tabular}{|c|c|}
\hline Topological & Relational\\ \hline
Nets & Subsets\\
$\mathbf{d}$-Cauchy & $\mathbf{d}$-directed\\
$\mathbf{d}^\circ_\circ$-limit & $\mathbf{d}$-supremum\\
$\mathbf{d}^\bullet_\circ$-limit & $\mathbf{d}$-maximum\\ \hline
\end{tabular}
\end{figure}
\noindent
As with hole topologies, we feel the relational notions have not received the attention they deserve.  Even apart from their intrinsic interest, these relational notions can serve as a useful intermediary between classical order theoretic concepts and their topological generalizations.  So in \autoref{DirectedSubsets} we define $\mathbf{d}$-directed subsets and explore their relation to $\mathbf{d}$-Cauchy nets.

Suprema are usually considered the poset analog of limits.  However maxima, in an appropriate sense, can be better suited to non-reflexive transitive relations.  In \autoref{UpperBounds} we extend these concepts to distances $\mathbf{d}$ and examine their connection to suprema and maxima relative to $\leq^\mathbf{d}$ and $<^\mathbf{d}$.

In \autoref{Completeness}, we define topological and relational notions of completeness and explain how they generalize standard notions of Yoneda, Smyth, metric and directed completeness.  We then show how to turn $\mathbf{d}$-Cauchy nets into $\mathbf{d}$-directed subsets under several interpolation conditions.  These allow $\mathbf{d}^\bullet_\circ$-completeness(=Smyth completeness for hemimetric $\mathbf{d}$) to be derived from $\mathbf{d}$-$\max$-completeness in \autoref{Sc}, complementing the Yoneda completeness characterizations in \cite{Bice2017}.

In our future work we will discuss generalizations of continuity and the resulting generalizations of domains, in particular showing how to complete (generalized) predomains to domains via the (reverse) Hausdorff distance and the formal ball construction.

\section[blah]{Generalized Relations}\label{TheCat}

The traditional category theoretic approach to quasimetric spaces is to take each quasimetric as its own category, with the elements of the space as objects and the values of the quasimetric as morphisms, as in \cite{Lawvere2002}.  Alternatively, quasimetric spaces are sometimes considered as the objects of a category with Lipschitz maps as morphisms, as in \cite[Defintion 6.2.13]{Goubault2013}.  However, the constructions we consider are best described in a category with quasimetrics, and even more general binary functions, as the morphisms instead.  This is like the category of modules considered in \cite[\S 2.3]{HofmannWaszkiewicz2012}, except that our objects are just sets, without any distinguished hemimetric structure.

Specifically, we consider any $\mathbf{d}\in[0,\infty]^{X\times Y}(=$ functions from $X\times Y$ to $[0,\infty]$) as a \emph{generalized relation} from $X$ to $Y$.  We extend the standard infix notation for classical relations to generalized relations and define
\[x\mathbf{d}y=\mathbf{d}(x,y).\]
Just like the category $\mathbf{Rel}$ of classical relations, generalized relations form the morphisms of a category $\mathbf{GRel}$ when composition $\mathbf{d}\circ\mathbf{e}\in[0,\infty]^{X\times Y}$ of $\mathbf{d}\in[0,\infty]^{X\times Z}$ and $\mathbf{e}\in[0,\infty]^{Z\times Y}$ is defined by
\[x(\mathbf{d}\circ\mathbf{e})y=\inf_{z\in Z}(x\mathbf{d}z+z\mathbf{e}y).\]
In fact, $\mathbf{Rel}$ becomes a wide subcategory of $\mathbf{GRel}$ when we identify each relation $\sqsubset\ \subseteq X\times Y$ with its characteristic function (as we do from now \nolinebreak on):
\[\sqsubset(x,y)=\begin{cases}0&\text{if }x\sqsubset y\\ \infty&\text{otherwise}.\end{cases}\]

For any $\mathbf{d}\in[0,\infty]^{X\times Y}$, $\sqsubset\ \subseteq[0,\infty]^{[0,\infty]}$ and $r\in[0,\infty]$ we define
\[x\sqsubset^\mathbf{d}_ry\qquad\Leftrightarrow\qquad x\mathbf{d}y\sqsubset r.\]
In particular, we let $\leq^\mathbf{d}\ =\ \leq^\mathbf{d}_0$ so
\[x\leq^\mathbf{d}y\qquad\Leftrightarrow\qquad x\mathbf{d}y=0.\]
Equivalently, $\leq^\mathbf{d}$ is the relation identified with $\infty\mathbf{d}$, where $\infty0=0$ and $\infty r=\infty$, for $r>0$.  Note $\mathbf{d}\mapsto\ \leq^\mathbf{d}$ is a left inverse of the inclusion from $\mathbf{Rel}$ to $\mathbf{GRel}$, which is also functorial in that
\[\leq^\mathbf{d}\circ\leq^\mathbf{e}\quad\subseteq\quad\leq^{\mathbf{d}\circ\mathbf{e}}.\]

Various properties of $\mathbf{Rel}$ also extend to $\mathbf{GRel}$.  For example, as in \cite{Tsalenko2001}, $\mathbf{GRel}$ is a category with involution $\mathbf{d}^\mathrm{op}$ defined by
\[x\mathbf{d}^\mathrm{op}y=y\mathbf{d}x.\]
Also, $\mathbf{GRel}$ is a 2-category, namely a $2$-poset, with the pointwise order
\[\mathbf{d}\leq\mathbf{e}\quad\Leftrightarrow\quad\forall x\in X\,\forall y\in Y\ x\mathbf{d}y\leq x\mathbf{e}y,\]
which is compatible with both $\circ$ and $^\mathrm{op}$.  Each hom-set $[0,\infty]^{X\times Y}$ is also a complete lattice with minimum $\mathbf{0}$ and maximum $\boldsymbol{\infty}$ where, for $x\in X$, $y\in Y$ and $r\in[0,\infty]$,
\[x\mathbf{r}y=r.\]
In particular, we have `intersections' $\mathbf{d}\vee\mathbf{e}$ and symmetrizations
\[\mathbf{d}^\vee=\mathbf{d}\vee\mathbf{d}^\mathrm{op},\]
when $X=Y$, in which case we define $\equiv^\mathbf{d}\ =\ (\leq^\mathbf{d})^\vee\ =\ \leq^{\mathbf{d}^\vee}$, i.e.
\[x\equiv^\mathbf{d}y\qquad\Leftrightarrow\qquad x\mathbf{d}y=0=y\mathbf{d}x.\]
In fact, the only thing stopping $\mathbf{GRel}$ from being an allegory, in the sense of \cite{FreydScedrov1990}, is the modularity requirement.

However, as in division allegories, we do have Kan extensions/lifts.  Namely, for $\mathbf{d}\in[0,\infty]^{X\times Z}$ and $\mathbf{e}\in[0,\infty]^{Y\times Z}$, define $\mathbf{d}/\mathbf{e}\in[0,\infty]^{X\times Y}$ \nolinebreak by
\begin{align*}
x(\mathbf{d}/\mathbf{e})y&=\sup_{z\in Z}(x\mathbf{d}z-y\mathbf{e}z)_+,
\end{align*}
where $r_+=r\vee0$, for $r\in[0,\infty]$, and we take $\infty-\infty=0$.  This guarantees
\begin{equation}\label{a<b+c}
a\leq b+c\qquad\Leftrightarrow\qquad a-b\leq c,
\end{equation}
for all $a,b,c\in[0,\infty]$.  It also means that, for all $c\in[0,\infty)$,
\begin{equation}\label{a+(-b+c)}
a+(-b+c)\leq(a-b)+c.
\end{equation}
Also, for $\mathbf{d}\in[0,\infty]^{Z\times Y}$ and $\mathbf{e}\in[0,\infty]^{Z\times X}$, define $\mathbf{e}\backslash\mathbf{d}\in[0,\infty]^{X\times Y}$ by
\begin{align*}
x(\mathbf{e}\backslash\mathbf{d})y&=\sup_{z\in Z}(z\mathbf{d}y-z\mathbf{e}x)_+.
\end{align*}

\begin{prp}\label{Kan}
For $\mathbf{d}\in[0,\infty]^{X\times Z}$, $\mathbf{e}\in[0,\infty]^{Z\times Y}$ and $\mathbf{f}\in[0,\infty]^{X\times Y}$,
\[\mathbf{f}/\mathbf{e}\leq\mathbf{d}\qquad\Leftrightarrow\qquad\mathbf{f}\leq\mathbf{d}\circ\mathbf{e}\qquad\Leftrightarrow\qquad\mathbf{d}\backslash\mathbf{f}\leq\mathbf{e}.\]
\end{prp}

\begin{proof}
Simply note that, for all $x\in X$, $y\in Y$ and $z\in Z$,
\[x\mathbf{f}y-z\mathbf{e}y\leq x\mathbf{d}z\quad\Leftrightarrow\quad x\mathbf{f}y\leq x\mathbf{d}z+z\mathbf{e}y\quad\Leftrightarrow\quad x\mathbf{f}y-x\mathbf{d}z\leq z\mathbf{e}y.\qedhere\]
\end{proof}

\section{\bf Distances}\label{Distances}

We call $\mathbf{d}\in[0,\infty]^{X\times X}$ a \emph{distance}\footnote{\label{dfoot}Functions merely satisfying the triangle inequality do not appear to have been named before.  We feel `distance' is appropriate, as this is already used informally to refer to various functions which at least satisfy the triangle inequality.  But if we were to follow the tradition of adding prefixes to `metric' for weaker notions, `demimetric' or something similar might be appropriate.} if it satisfies the triangle inequality
\[\mathbf{d}\leq\mathbf{d}\circ\mathbf{d}.\tag{$\triangle$}\label{tri}\]
Equivalently, \eqref{tri} is saying that, for all $r,s\in(0,\infty)$ and $x,y,z\in X$,
\[x<^\mathbf{d}_rz<^\mathbf{d}_sy\qquad\Rightarrow\qquad x<^\mathbf{d}_{r+s}y.\]
In particular, $\sqsubset\ \subseteq X\times X$ is a distance iff it is transitive in the usual sense.  As $\mathbf{d}\mapsto\ \leq^\mathbf{d}$ is functorial, this means $\leq^\mathbf{d}$ is transitive whenever $\mathbf{d}$ is a distance.  As in \cite[Definition 6.1.1]{Goubault2013}, we call a distance $\mathbf{d}$ a
\begin{enumerate}
\item \emph{hemimetric} if $\leq^\mathbf{d}$ is a preorder.
\item \emph{quasimetric} if $\leq^\mathbf{d}$ is a partial order.
\end{enumerate}
(Recall that a preorder is a reflexive ($=\ \subseteq\ \leq$) transitive relation and a partial order is an antisymmetric ($\leq\cap\leq^\mathrm{op}\ \subseteq\ =$) preorder).

Non-hemimetric distances have rarely been considered until now.  However, the extra generality is vital if we want to consider distance analogs of non-reflexive transitive relations, like the way-below relation from domain theory.  But there are two closely related hemimetrics associated to any generalized relation, which will be crucial to our later work.

To avoid repetition, we now make the following standing assumption.
\[\textbf{We are given sets $X$ and $Y$ and functions }\mathbf{d},\mathbf{e}\in[0,\infty]^{X\times Y}.\]

\begin{dfn}
\begin{align}
\label{doverdef}\overline{\mathbf{d}}&=\mathbf{d}/\mathbf{d}\in[0,\infty]^{X\times X}&&\text{i.e.}&x\overline{\mathbf{d}}z&=\sup_{y\in Y}(x\mathbf{d}y-z\mathbf{d}y)_+.\\
\label{dunderdef}\underline{\mathbf{d}}&=\mathbf{d}\backslash\mathbf{d}\in[0,\infty]^{Y\times Y}&&\text{i.e.}&z\underline{\mathbf{d}}y&=\sup_{x\in X}(x\mathbf{d}y-x\mathbf{d}z)_+.
\end{align}
We call $\overline{\mathbf{d}}$ and $\underline{\mathbf{d}}$ the \emph{upper} and \emph{lower hemimetric} of $\mathbf{d}$ respectively.
\end{dfn}

This terminology is justified by the following.

\begin{prp}\label{hemiprop}
Both $\overline{\mathbf{d}}$ and $\underline{\mathbf{d}}$ are hemimetrics and $\mathbf{d}=\overline{\mathbf{d}}\circ\mathbf{d}=\mathbf{d}\circ\underline{\mathbf{d}}$.
\end{prp}

\begin{proof} $\mathbf{d}\leq(\mathbin{=}\circ\mathbf{d})$ implies $\overline{\mathbf{d}}=\mathbf{d}/\mathbf{d}\leq\ =$ so $\leq^{\overline{\mathbf{d}}}$ is reflexive.  As $\mathbf{d}/\mathbf{d}\leq\overline{\mathbf{d}}$,
\begin{align*}
\mathbf{d}&\leq\overline{\mathbf{d}}\circ\mathbf{d}\leq(\mathbin{=}\circ\mathbf{d})=\mathbf{d}\quad\text{and}\\
\mathbf{d}&\leq\mathbf{\overline{d}}\circ\mathbf{d}\leq\mathbf{\overline{d}}\circ\mathbf{\overline{d}}\circ\mathbf{d}\quad\text{so}\\
\mathbf{\overline{d}}&=\mathbf{d}/\mathbf{d}\leq\mathbf{\overline{d}}\circ\mathbf{\overline{d}},\quad\text{i.e. $\overline{\mathbf{d}}$ is a distance}.
\end{align*}
Thus $\overline{\mathbf{d}}$ is a hemimetric with $\mathbf{d}=\overline{\mathbf{d}}\circ\mathbf{d}$.  As $\overline{\mathbf{d}^\mathrm{op}}=\underline{\mathbf{d}}^\mathrm{op}$, $\underline{\mathbf{d}}^\mathrm{op}$ and hence $\underline{\mathbf{d}}$ is a hemimetric with $\mathbf{d}^\mathrm{op}=\overline{\mathbf{d}^\mathrm{op}}\circ\mathbf{d}^\mathrm{op}=\underline{\mathbf{d}}^\mathrm{op}\circ\mathbf{d}^\mathrm{op}$ and hence $\mathbf{d}=\mathbf{d}\circ\underline{\mathbf{d}}$.
\end{proof}

\begin{prp}
If $X=Y$ (i.e. $\mathbf{d}\in[0,\infty]^{X\times X}$) then\footnote{The $\Leftarrow$ in \eqref{hemi} is a form of the Yoneda lemma \textendash\, see \cite[Exercise 7.5.26]{Goubault2013}.}
\vspace{-2pt}
\begin{align}
\label{dis}\overline{\mathbf{d}}\leq\mathbf{d}\qquad\Leftrightarrow\qquad\underline{\mathbf{d}}\leq\mathbf{d}\qquad\Leftrightarrow&\qquad\mathbf{d}\text{ is a distance}.\\
\label{ref}\overline{\mathbf{d}}\geq\mathbf{d}\qquad\Leftrightarrow\qquad\underline{\mathbf{d}}\geq\mathbf{d}\qquad\Leftrightarrow&\qquad\leq^\mathbf{d}\text{ is reflexive}.\\
\label{hemi}\overline{\mathbf{d}}=\mathbf{d}\qquad\Leftrightarrow\qquad\underline{\mathbf{d}}=\mathbf{d}\qquad\Leftrightarrow&\qquad\mathbf{d}\text{ is a hemimetric}.
\end{align}
\end{prp}

\begin{proof} We consider $\mathbf{\overline{d}}$, and the $\underline{\mathbf{d}}$ statements then follow from $\overline{\mathbf{d}^\mathrm{op}}=\underline{\mathbf{d}}^\mathrm{op}$.
\begin{itemize}
\item[\eqref{dis}]  $\mathbf{d}\leq\mathbf{d}\circ\mathbf{d}\ \Leftrightarrow\ \mathbf{d}/\mathbf{d}\leq\mathbf{d}$.

\item[\eqref{ref}]  If $\mathbf{d}\leq\mathbf{\overline{d}}$ then $\mathbf{d}\leq\ =$.  If $\mathbf{d}\leq\ =$ then $\mathbf{d}=(\mathbf{d}/\mathbin{=})\leq\mathbf{d}/\mathbf{d}=\mathbf{\overline{d}}$.

\item[\eqref{hemi}]  Immediate from \eqref{dis} and \eqref{ref}.
\qedhere
\end{itemize}
\end{proof}

\begin{xpl}\label{dqxpl}
Consider $\mathbf{f},\mathbf{q}\in[0,1]^{[0,1]\times[0,1]}$ given by
\begin{align*}
x\mathbf{f}y&=x(1-y).\\
x\mathbf{q}y&=(x-y)_+.
\end{align*}
Here $\mathbf{q}$ is the restriction of the usual quasimetric on $[0,\infty]$ (note \eqref{tri} for $\mathbf{q}$ follows from the subadditivity of $_+$) and $\mathbf{f}$ is also a distance as
\[x(1-y)=x(1-z+z)(1-y)\leq x(1-z)+z(1-y).\]
As $(x-y)_+=\sup\limits_{z\in[0,1]}(x(1-z)-y(1-z))_+=\sup\limits_{z\in[0,1]}(z(1-y)-z(1-x))_+$,
\[\mathbf{q}=\overline{\mathbf{f}}=\underline{\mathbf{f}}.\]
\end{xpl}

Before moving on, we make an observation about restrictions.  First, identify $Z\subseteq Y$ with the characteristic function on $Y\times Y$ of $=$ on $Z$, i.e.
\[Z(x,y)=\begin{cases}0&\text{if }x=y\in Z\\ \infty&\text{otherwise},\end{cases}\]
so $\mathbf{d}\circ Z\circ\overline{\mathbf{d}}$ then denotes composition restricted to $Z$, i.e.
\[x(\mathbf{d}\circ Z\circ\overline{\mathbf{d}})y=\inf_{z\in Z}(x\mathbf{d}z+z\overline{\mathbf{d}}y).\]

\begin{prp}\label{reflexrestrict}
If $\mathbf{d}\circ Z\circ\underline{\mathbf{d}}\leq\mathbf{d}$ then $\overline{\mathbf{d}}=\overline{\mathbf{d}|_{X\times Z}}$.
\end{prp}

\begin{proof}
For any $w,x\in X$, we see that
\[w\overline{\mathbf{d}|_{X\times Z}}x=\sup_{z\in Z}(w\mathbf{d}z-x\mathbf{d}z)\leq\sup_{y\in Y}(w\mathbf{d}y-x\mathbf{d}y)=w\overline{\mathbf{d}}x,\]
so $\overline{\mathbf{d}|_{X\times Z}}\leq\overline{\mathbf{d}}$.  Conversely, for any $w,x\in X$,
\begin{align*}
w\overline{\mathbf{d}}x&=\sup_{y\in Y}(w\mathbf{d}y-x\mathbf{d}y)_+\\
&\leq\sup_{y\in Y}(w\mathbf{d}y-x(\mathbf{d}\circ Z\circ\underline{\mathbf{d}})y)_+\\
&=\sup_{y\in Y}(w\mathbf{d}y-\inf_{z\in Z}(x\mathbf{d}z+z\underline{\mathbf{d}}y))_+\\
&=\sup_{y\in Y,z\in Z}(w\mathbf{d}y-x\mathbf{d}z-z\underline{\mathbf{d}}y)_+\\
&\leq\sup_{z\in Z}(w\mathbf{d}z-x\mathbf{d}z)_+\\
&=w\overline{\mathbf{d}|_{X\times Z}}x,
\end{align*}
where $w\mathbf{d}y\leq w\mathbf{d}z+z\underline{\mathbf{d}}y$ follows from $\mathbf{d}=\mathbf{d}\circ\underline{\mathbf{d}}$, by \autoref{hemiprop}.
\end{proof}

\section{The Uniform Preorder}\label{Uniform}

As mentioned above, we usually view $\mathbf{GRel}$ as a $2$-poset with respect to the pointwise ordering on morphisms.  However, there is also a weaker $2$-proset structure based on the notion of uniform equivalence for metrics.  Specifically, we define the \emph{uniform preorder} $\precapprox$ by
\[\mathbf{d}\precapprox\mathbf{e}\qquad\Leftrightarrow\qquad\forall Z\subseteq X\times Y\ (\inf_{(x,y)\in Z}x\mathbf{e}y=0\quad\Rightarrow\quad\inf_{(x,y)\in Z}x\mathbf{d}y=0).\]
Note that that $\precapprox$ depends only on the values of $\mathbf{d}$ and $\mathbf{e}$ close to $0$.  More precisely, we show below that $\mathbf{d}\precapprox\mathbf{e}$ is equivalent to
\[\forall\epsilon>0\ \exists\delta>0\ \forall x\in X\ \forall y\in Y\ (x\mathbf{e}y<\delta\ \Rightarrow\ x\mathbf{d}y<\epsilon).\]
In particular, $\approx$ defined by
\[\mathbf{d}\approx\mathbf{e}\qquad\Leftrightarrow\qquad\mathbf{d}\precapprox\mathbf{e}\precapprox\mathbf{d}\]
does indeed extend the usual uniform equivalence relation on metrics.  Indeed, $\precapprox$ plays a similarly fundamental role in applications (e.g. see \cite{BiceVignati2018}).

\begin{prp}\label{uniequiv}
\[\mathbf{d}\precapprox\mathbf{e}\qquad\Leftrightarrow\qquad\forall\epsilon>0\ \exists\delta>0\ (<^\mathbf{e}_\delta\ \subseteq\ <^\mathbf{d}_\epsilon).\]
\end{prp}

\begin{proof} Assume that, for every $\epsilon>0$, we have some $\delta>0$ such that $x\mathbf{e}y<\delta$ implies $x\mathbf{d}y<\epsilon$.  For any $Z\subseteq X\times Y$ with $\inf_{(x,y)\in Z}x\mathbf{e}y=0$, we have $(x,y)\in Z$ with $x\mathbf{e}y<\delta$ so $x\mathbf{d}y<\epsilon$ and hence $\inf_{(x,y)\in Z}x\mathbf{d}y<\epsilon$.  Thus $\inf_{(x,y)\in Z}x\mathbf{d}y=0$, as $\epsilon>0$ was arbitrary, i.e. $\mathbf{d}\precapprox\mathbf{e}$.

Conversely, assume we have some $\epsilon>0$ such that, for all $\delta>0$, there exists some $x\in X$ and $y\in Y$ with $x\mathbf{e}y<\delta$ but $x\mathbf{d}y\geq\epsilon$.  In particular, we have $(x_n,y_n)$ with $x_n\mathbf{e}y_n<1/n$ but $x_n\mathbf{d}y_n\geq\epsilon$.  Thus for
\[Z=\{(x_n,y_n):n\in\mathbb{N}\},\]
we have $\inf_{(x,y)\in Z}x\mathbf{e}y=0$ but $\inf_{(x,y)\in Z}x\mathbf{d}y\geq\epsilon>0$, i.e. $\mathbf{d}\not\precapprox\mathbf{e}$.
\end{proof}

Note for (the characteristic function of) any relation $\sqsubset$ and $r\in(0,\infty)$,
\[<^\sqsubset_r\ =\ \sqsubset.\]
Thus by \autoref{uniequiv}, $\precapprox$ reduces to inclusion $\subseteq$ on $\mathbf{Rel}$ so $\precapprox$ is also a valid extension of the $2$-poset structure from $\mathbf{Rel}$ to $\mathbf{GRel}$.

Also note $\precapprox$ can be expressed in terms of $\tfrac{\mathbf{d}}{\mathbf{e}}\in[0,\infty]^{[0,\infty]}$ defined by
\[\tfrac{\mathbf{d}}{\mathbf{e}}(r)=\sup_{x\mathbf{e}y\leq r}x\mathbf{d}y,\]
i.e. $\tfrac{\mathbf{d}}{\mathbf{e}}$ is the smallest monotone function satisfying
\[\tfrac{\mathbf{d}}{\mathbf{e}}(x\mathbf{e}y)\geq x\mathbf{d}y.\]
Specifically, from \autoref{uniequiv} it follows that
\begin{equation}\label{d/e}
\mathbf{d}\precapprox\mathbf{e}\qquad\Leftrightarrow\qquad\lim_{r\rightarrow0}\tfrac{\mathbf{d}}{\mathbf{e}}(r)=0.
\end{equation}

\section{Balls}\label{B}

Often it will also be convenient to consider the unary functions defined from binary functions by fixing one coordinate.  Specifically, for $x\in X$ and $y\in Y$, define $x\mathbf{d}\in[0,\infty]^Y$ and $\mathbf{d}y\in[0,\infty]^X$ by
\[x\mathbf{d}(y)=x\mathbf{d}y=\mathbf{d}y(x).\]
Again we identify subsets with characteristic functions so, for $\sqsubset\ \subseteq X\times Y$,
\begin{align*}
x\sqsubset\quad&=\quad\{y\in Y:x\sqsubset y\}.\\
\sqsubset y\quad&=\quad\{x\in X:x\sqsubset y\}.
\end{align*}
In particular, we define the open upper and lower $\mathbf{d}$-balls with centre $c$ in $X$ or $Y$ and radius $r$ by
\begin{align}
\label{highballs}c^\bullet_r\quad=\quad c\mathrel{<^\mathbf{d}_r}\quad&=\quad\{y\in Y:c\mathbf{d}y<r\}.\\
\label{lowballs}c_\bullet^r\quad=\hspace{13pt}\mathrel{<^\mathbf{d}_r}c\quad&=\quad\{x\in X:x\mathbf{d}c<r\}.
\end{align}
These characterize $\overline{\mathbf{d}}$ and $\underline{\mathbf{d}}$ as follows (taking $\inf\emptyset=\infty$).

\begin{prp}
\begin{align*}
x\overline{\mathbf{d}}z&=\inf\{\epsilon>0:\forall r\in(0,\infty)\ z^\bullet_r\subseteq x^\bullet_{r+\epsilon}\}.\\
z\underline{\mathbf{d}}y&=\inf\{\epsilon>0:\forall r\in(0,\infty)\ z^r_\bullet\subseteq y^{r+\epsilon}_\bullet\}.
\end{align*}
\end{prp}

\begin{proof}
If $x\overline{\mathbf{d}}z<\epsilon$ then, for any $r\in(0,\infty)$ and $w\in z^\bullet_r$, \autoref{hemiprop} yields $x\mathbf{d}w\leq x\overline{\mathbf{d}}z+z\mathbf{d}w<\epsilon+r$ so $w\in x^\bullet_{r+\epsilon}$, i.e. $z^\bullet_r\subseteq x^\bullet_{r+\epsilon}$.  Conversely, say $\epsilon>0$ and $z^\bullet_r\subseteq x^\bullet_{r+\epsilon}$, for all $r\in(0,\infty)$, and take $w\in X$.  If $x\mathbf{d}w=\infty$ then $(x\mathbf{d}w-z\mathbf{d}w)_+=0<\epsilon$.  Otherwise, for all $r\in(z\mathbf{d}w,\infty)$ we have $w\in z^\bullet_r\subseteq x^\bullet_{r+\epsilon}$ and hence $(x\mathbf{d}w-z\mathbf{d}w)_+<r+\epsilon-z\mathbf{d}w$.  As $r>z\mathbf{d}w$ and $w\in X$ were arbitrary, $x\overline{\mathbf{d}}z\leq\epsilon$.  The $\underline{\mathbf{d}}$ statement follows by duality.
\end{proof}

In particular, for any $\sqsubset\ \subseteq X\times Y$,
\begin{align}
\label{sqsubover}x\mathrel{\overline{\hspace{-.7pt}\sqsubset\hspace{-.7pt}}}z\qquad&\Leftrightarrow\qquad(z\sqsubset)\subseteq(x\sqsubset).\\
\label{sqsubunder}z\mathrel{\underline{\hspace{-.7pt}\sqsubset\hspace{-.7pt}}}y\qquad&\Leftrightarrow\qquad(\sqsubset z)\subseteq(\sqsubset y).
\end{align}
In \cite{Erne1991} before Lemma 3.1, these are called the `upper quasiorder' and `lower quasiorder' of $\sqsubset$ (we say preorder instead of quasiorder).  For example, the upper and lower preorder defined from the strict ordering $<$ on $[0,\infty]$ both coincide with the usual ordering on $[0,\infty]$, which we continue to denote by $\leq$ as usual.  More generally, if $X$ is a domain with way-below relation $\ll$ then $\underline{\ll}$ gives back the original ordering on $X$.  From this dual point of view, the lower preorder defined from a transitive relation is just as important as the way-below relation defined from a partial order.  Our thesis is that the same is true for non-symmetric distances as well.

Let $\mathbf{d}^\bullet$ denote the topology on $Y$ generated (as arbitrary unions of finite intersections) by open upper $\mathbf{d}$-balls with centres in $X$, i.e.
\[\mathbf{d}^\bullet\text{ is the topology on $Y$ with subbasis }(x^\bullet_r)_{x\in X,r\in(0,\infty)}.\]
As $x^\bullet_\infty=\bigcup_{r\in(0,\infty)}x^\bullet_r$ and $x^\bullet_0=\emptyset$ are both $\mathbf{d}^\bullet$-open anyway, we could actually take $r\in[0,\infty]$.  Likewise, we let $\mathbf{d}_\bullet$ denote the topology on $X$ generated by open lower $\mathbf{d}$-balls with centres in $Y$, i.e.
\[\mathbf{d}_\bullet\text{ is the topology on $X$ with subbasis }(y_\bullet^r)_{y\in Y,r\in(0,\infty)}.\]
We discuss these further in \autoref{Topology}.  For the moment we just note that $\leq^{\underline{\mathbf{d}}}$ and $\leq^{\overline{\mathbf{d}}}$ are the specialization preorders coming from the $\mathbf{d}^\bullet$ and $\mathbf{d}_\bullet$ topologies.

\begin{prp}\label{specialization}
\begin{align}
\label{specpre}z\leq^{\underline{\mathbf{d}}}y\qquad&\Leftrightarrow\qquad z\text{ is in the $\mathbf{d}^\bullet$-closure of }\{y\}.\\
\label{specpre2}x\leq^{\overline{\mathbf{d}}}z\qquad&\Leftrightarrow\qquad x\text{ is in the $\mathbf{d}_\bullet$-closure of }\{z\}.
\end{align}
\end{prp}

\begin{proof}
Note $z\underline{\mathbf{d}}y=0$ means $x\mathbf{d}y\leq x\mathbf{d}z$, for all $x\in X$, which is equivalent to saying every open upper $\mathbf{d}$-ball containing $z$ must also contain $y$.  Thus the same is true of intersections of such balls and hence unions of such intersections, i.e. all $\mathbf{d}^\bullet$-open sets.  This proves \eqref{specpre} and \eqref{specpre2} again follows by duality.
\end{proof}

\section{The Strict Order}\label{TSO}

Here we examine a strict version $<^\mathbf{d}$ of $\leq^\mathbf{d}$ satisfying an analog of $\mathbf{d}=\overline{\mathbf{d}}\circ\mathbf{d}=\mathbf{d}\circ\underline{\mathbf{d}}$ from \autoref{hemiprop}.  First, consider the following.

\begin{prp}\label{strictmotivation}
For any $r,s\in(0,\infty)$ with $r<s$,
\[x<^\mathbf{d}_ry\quad\Rightarrow\quad(x\leq^\mathbf{d}_r)\text{ is a $\underline{\mathbf{d}}^\bullet$-neighbourhood of }y\quad\Rightarrow\quad x<^\mathbf{d}_sy.\]
\end{prp}

\begin{proof}
If $x<^\mathbf{d}_ry$ then $\delta=r-x\mathbf{d}y>0$ and, for any $z\in(y<^{\underline{\mathbf{d}}}_\delta)$, $\mathbf{d}=\mathbf{d}\circ\underline{\mathbf{d}}$ from \autoref{hemiprop} yields
\[x\mathbf{d}z\leq x\mathbf{d}y+y\underline{\mathbf{d}}z<x\mathbf{d}y+\delta=r.\]
Thus $y\in(y<^{\underline{\mathbf{d}}}_\delta)\subseteq(x\leq^\mathbf{d}_r)$ so $(x\leq^\mathbf{d}_r)$ is a $\underline{\mathbf{d}}^\bullet$-neighbourhood of $y$.

On the other hand, if $(x\leq^\mathbf{d}_r)$ is a $\underline{\mathbf{d}}^\bullet$-neighbourhood of $y$ then, in particular, $x\leq^\mathbf{d}_ry$ so $x\mathbf{d}y\leq r<s$, i.e. $x<^\mathbf{d}_sy$.
\end{proof}

\autoref{strictmotivation} motivates the following definition of $<^\mathbf{d}$.
\begin{equation}\label{Twaybelow}
x<^\mathbf{d}y\qquad\Leftrightarrow\qquad(x\leq^\mathbf{d})\text{ is a $\underline{\mathbf{d}}^\bullet$-neighbourhood of }y.
\end{equation}
As $\underline{\mathbf{d}}$ is a hemimetric, $x<^\mathbf{d}y$ is equivalent to saying there is some open upper $\underline{\mathbf{d}}$-ball with centre $y$ which is entirely $\mathbf{d}$-above $x$, i.e.
\begin{align}
\label{<^d}x<^\mathbf{d}y\qquad&\Leftrightarrow\qquad\exists\epsilon>0\ (y<^{\underline{\mathbf{d}}}_\epsilon)\subseteq(x\leq^\mathbf{d}).\\
\nonumber&\Leftrightarrow\qquad\exists\epsilon>0\ \forall z\in X\ (y<^{\underline{\mathbf{d}}}_\epsilon z\ \Rightarrow\ x\leq^\mathbf{d}z).
\end{align}

When $\mathbf{d}$ itself is a hemimetric, \autoref{hemiprop} yields $\mathbf{d}=\underline{\mathbf{d}}$ so \eqref{specpre} and \eqref{Twaybelow} show that $<^\mathbf{d}$ is the $\mathbf{d}^\bullet$-topological way-below relation $\prec\!\prec$ familiar from Ern\'{e}'s c-spaces \textendash\, see \cite[\S2.5]{Keimel2016}.  In fact, if one considers the motivating example from \cite{Keimel2016}, namely $(C_0(X)_+,\prec\!\prec)$ where $f\prec\!\prec g$ means $f\leq(g-\epsilon)_+$, for some $\epsilon>0$, then again we see that $\prec\!\prec$ is just $<^\mathbf{d}$ for the hemimetric $f\mathbf{d}g=\sup_{x\in X}(f(x)-g(x))_+$.

\begin{prp}\label{<d}
\[\leq^\mathbf{d}\quad\supseteq\quad<^\mathbf{d}\quad=\quad\leq^{\overline{\mathbf{d}}}\circ<^\mathbf{d}\quad=\quad<^\mathbf{d}\circ\leq^{\underline{\mathbf{d}}}\quad\supseteq\quad\leq^\mathbf{d}\circ<^{\underline{\mathbf{d}}}.\]
\end{prp}

\begin{proof}
Note $y<^{\underline{\mathbf{d}}}_\epsilon y$, for all $\epsilon>0$.  So whenever $x<^\mathbf{d}y$, we have $\epsilon>0$ with $y\in(y<^{\underline{\mathbf{d}}}_\epsilon)\subseteq(x\leq^\mathbf{d})$, i.e. $x\leq^\mathbf{d}y$ and hence $\leq^\mathbf{d}\ \supseteq\ <^\mathbf{d}$.

If $x\leq^{\overline{\mathbf{d}}}y<^\mathbf{d}z$ then, for some $\epsilon>0$, $(z<^{\underline{\mathbf{d}}}_\epsilon)\subseteq(y\leq^\mathbf{d})\subseteq(x\leq^\mathbf{d})$, as $\mathbf{d}=\overline{\mathbf{d}}\circ\mathbf{d}$, by \autoref{hemiprop}, so $y\mathbf{d}w=0$ implies $x\mathbf{d}w\leq x\overline{\mathbf{d}}y+y\mathbf{d}w=0$.  Thus $x<^\mathbf{d}z$ and hence $\leq^{\overline{\mathbf{d}}}\circ<^\mathbf{d}\ \subseteq\ <^\mathbf{d}$.

If $x<^\mathbf{d}y\leq^{\underline{\mathbf{d}}}z$ then, for some $\epsilon>0$, $(z<^{\underline{\mathbf{d}}}_\epsilon)\subseteq(y<^{\underline{\mathbf{d}}}_\epsilon)\subseteq(x\leq^\mathbf{d})$, as $\underline{\mathbf{d}}$ is a distance, by \autoref{hemiprop}, so $z\underline{\mathbf{d}}w<\epsilon$ implies $y\underline{\mathbf{d}}w\leq y\underline{\mathbf{d}}z+z\underline{\mathbf{d}}w<\epsilon$.  Thus $x<^\mathbf{d}z$ and hence $<^\mathbf{d}\circ\leq^{\underline{\mathbf{d}}}\ \subseteq\ <^\mathbf{d}$.

The reverse inclusions are immediate from the fact $\leq^{\overline{\mathbf{d}}}$ and $\leq^{\underline{\mathbf{d}}}$ are reflexive, by \autoref{hemiprop}.

If $x\leq^\mathbf{d}y<^{\underline{\mathbf{d}}}z$ then, for some $\epsilon>0$, $(z<^{\underline{\mathbf{d}}}_\epsilon)\subseteq(y\leq^{\underline{\mathbf{d}}})\subseteq(x\leq^\mathbf{d})$, as $\mathbf{d}=\mathbf{d}\circ\underline{\mathbf{d}}$, by \autoref{hemiprop}, so $y\underline{\mathbf{d}}w=0$ implies $x\mathbf{d}w\leq x\mathbf{d}y+y\underline{\mathbf{d}}w=0$.  Thus $x<^\mathbf{d}z$ and hence $\leq^\mathbf{d}\circ<^{\underline{\mathbf{d}}}\ \subseteq\ <^\mathbf{d}$.
\end{proof}

\begin{cor}\label{<ddist}
If $\mathbf{d}\in[0,\infty]^{X\times X}$ is a distance, $<^\mathbf{d}$ is transitive and
\begin{align}
\label{<<=}<^\mathbf{d}\circ\leq^\mathbf{d}\ &\subseteq\ <^\mathbf{d}.\\
\label{<=<}\leq^\mathbf{d}\circ<^\mathbf{d}\ &\subseteq\ <^\mathbf{d}.
\end{align}
\end{cor}

\begin{proof}
As $\mathbf{d}$ is a distance, $\overline{\mathbf{d}},\underline{\mathbf{d}}\leq\mathbf{d}$, by \autoref{hemiprop}, so $\leq^{\overline{\mathbf{d}}},\leq^{\underline{\mathbf{d}}}\ \supseteq\ \leq^\mathbf{d}$ and $<^{\underline{\mathbf{d}}}\ \subseteq\ <^\mathbf{d}$.  Thus, by \autoref{<d},
\begin{align*}
<^\mathbf{d}\circ<^\mathbf{d}\ \subseteq\ <^\mathbf{d}\circ\leq^\mathbf{d}\ &\subseteq\ <^\mathbf{d}\circ\leq^{\underline{\mathbf{d}}}\ =\ <^\mathbf{d}.\\
\subseteq\ \leq^\mathbf{d}\circ<^\mathbf{d}\ &\subseteq\ \leq^{\overline{\mathbf{d}}}\circ<^\mathbf{d}\ =\ <^\mathbf{d}.\\
&\subseteq\ \leq^\mathbf{d}\circ<^{\underline{\mathbf{d}}}\ \subseteq\ <^\mathbf{d}.\qedhere
\end{align*}
\end{proof}

By \autoref{<d}, $<^\mathbf{d}\ =\ <^\mathbf{d}\circ\leq^{\underline{\mathbf{d}}}$ and hence, by \autoref{Kan},
\begin{equation}\label{lequnder}
\leq^{\underline{\mathbf{d}}}\ \ \subseteq\ \underline{<^\mathbf{d}}.
\end{equation}
We can improve this to an equality under a certain interpolation condition.

\begin{prp}\label{under<d}
If $\ \overline{\mathbf{d}}\ \circ<^\mathbf{d}\ \leq\mathbf{d}\ $ then $\ \leq^{\underline{\mathbf{d}}}\ =\underline{<^\mathbf{d}}$.
\end{prp}

\begin{proof}
To prove $\underline{<^\mathbf{d}}\subseteq\ \leq^{\underline{\mathbf{d}}}$, say $z\not\leq^{\underline{\mathbf{d}}}y$, i.e. $z\underline{\mathbf{d}}y>0$, so we have $x\in X$ with $x\mathbf{d}y-x\mathbf{d}z>0$, i.e. $x\mathbf{d}z<x\mathbf{d}y$.  As $\overline{\mathbf{d}}\ \circ<^\mathbf{d}\ \leq\mathbf{d}$, we have $w\in X$ with $x\overline{\mathbf{d}}w<x\mathbf{d}y$ and $w<^\mathbf{d}z$.  Thus $0<x\mathbf{d}y-x\overline{\mathbf{d}}w\leq w\mathbf{d}y$, as $\mathbf{d}=\mathbf{d}\circ\underline{\mathbf{d}}$ by \autoref{hemiprop}, i.e. $w\nleq^\mathbf{d}y$ and hence $w\not<^\mathbf{d}y$, so $z\not\!\!\underline{<^\mathbf{d}}\,y$.
\end{proof}

The similar condition $\mathbf{d}\circ\mathbin{<^{\underline{\mathbf{d}}}}\,\leq\,\mathbf{d}$ can be derived from another interpolation condition involving $\mathbf{d}\mathcal{P}$.  Specifically, for $\mathbf{d}\in[0,\infty]^{X\times Y}$, define $\mathbf{d}\mathcal{P}$ on $X\times\mathcal{P}(Y)$, where $\mathcal{P}(Y)=\{Z:Z\subseteq Y\}$, by
\[x(\mathbf{d}\mathcal{P})Z=\sup_{z\in Z}x\mathbf{d}z.\]
In particular, note $x\leq^{\mathbf{d}\mathcal{P}}Z$ means $x\leq^\mathbf{d}z$, for all $z\in Z$.  Also consider the following condition on closed upper balls $\overline{x}^\bullet_r=\{y\in Y:x\mathbf{d}y\leq r\}$ with finite radius $r<\infty$.
\[\label{minima}\tag{$*$}\text{Every finite radius closed upper $\mathbf{d}$-ball has a $\leq^{\underline{\mathbf{d}}}$-minimum.}\]

\begin{prp}\label{minimaProp}
\[\eqref{minima}\quad\Rightarrow\quad\mathbf{d}\circ\mathbin{\leq^{\underline{\mathbf{d}}\mathcal{P}}}\,\leq\,\mathbf{d}\mathcal{P}\quad\Rightarrow\quad\mathbf{d}\circ\mathbin{<^{\underline{\mathbf{d}}}}\,\leq\,\mathbf{d}.\]
\end{prp}

\begin{proof}
Take any $x\in X$ and $Z\subseteq Y$ and let $r=x(\mathbf{d}\mathcal{P})Z=\sup_{z\in Z}x\mathbf{d}z$.  If $r=\infty$ then we immediately have $x(\mathbf{d}\circ\mathbin{\leq^{\underline{\mathbf{d}}\mathcal{P}}})Y\leq r$.  Otherwise, we have a $\leq^{\underline{\mathbf{d}}}$\hspace{1pt}-minimum $y$ of $\overline{x}^\bullet_r$.  Thus $x\mathbf{d}y\leq r$ and $y\leq^{\underline{\mathbf{d}}}z$, for all $z\in Z$, i.e. $y\leq^{\underline{\mathbf{d}}\mathcal{P}}Z$.  So $x(\mathbf{d}\circ\mathbin{\leq^{\underline{\mathbf{d}}\mathcal{P}}})Z\leq r=x(\mathbf{d}\mathcal{P})Z$, proving the first $\Rightarrow$.

For the second $\Rightarrow$, assume $\mathbf{d}\circ\mathbin{\leq^{\underline{\mathbf{d}}\mathcal{P}}}\,\leq\,\mathbf{d}\mathcal{P}$ and say $x\mathbf{d}y<r$.  Take $\delta$ with $0<\delta<r-x\mathbf{d}y$ so that, as $\mathbf{d}=\mathbf{d}\circ\underline{\mathbf{d}}$, by \autoref{hemiprop},
\[x(\mathbf{d}\mathcal{P})(y<^{\underline{\mathbf{d}}}_\delta)\leq x\mathbf{d}y+y(\underline{\mathbf{d}}\mathcal{P})(y<^{\underline{\mathbf{d}}}_\delta)\leq x\mathbf{d}y+\delta<r.\]
  So we have $z\in Y$ with $x\mathbf{d}z<r$ and $z\leq^{\underline{\mathbf{d}}\mathcal{P}}(y<^{\underline{\mathbf{d}}}_\delta)$ and hence $z<^{\underline{\mathbf{d}}}y$.  Thus $x(\mathbf{d}\ \circ<^{\underline{\mathbf{d}}})y<r$.  As $r>x\mathbf{d}y$ was arbitrary, $\mathbf{d}\circ\mathbin{<^{\underline{\mathbf{d}}}}\,\leq\,\mathbf{d}$.
\end{proof}

When $\mathbf{d}$ is a hemimetric, we can even weaken $\leq$ to $\precapprox$.

\begin{prp}\label{hemi<d}
If $\mathbf{d}$ is a hemimetric then
\[\mathbf{d}\circ\mathbin{\leq^{\mathbf{d}\mathcal{P}}}\,\precapprox\,\mathbf{d}\mathcal{P}\quad\Rightarrow\quad\mathbf{d}\circ\mathbin{<^\mathbf{d}}\leq\,\mathbf{d}.\]
\end{prp}

\begin{proof}
Assume $\mathbf{d}\circ\mathbin{\leq^{\mathbf{d}\mathcal{P}}}\precapprox\,\mathbf{d}\mathcal{P}$ and say $x\mathbf{d}y<r$.  By \autoref{uniequiv}, we have some $\delta>0$ such that $y(\mathbf{d}\mathcal{P})Z\leq\delta$ implies $y(\mathbf{d}\circ\mathbin{\leq^{\mathbf{d}\mathcal{P}}})Z<r-x\mathbf{d}y$.  In particular, we can take $Z=y^\bullet_\delta$ and then we have $z\in X$ with $y\mathbf{d}z<r-x\mathbf{d}y$ and $z\leq^{\mathbf{d}\mathcal{P}}y^\bullet_\delta$.  Thus $x\mathbf{d}z\leq x\mathbf{d}y+y\mathbf{d}z<r$ and $(y<^{\underline{\mathbf{d}}}_\delta)=(y<^\mathbf{d}_\delta)\subseteq(z\leq^\mathbf{d})$, i.e. $z<^\mathbf{d}y$.  As $r>x\mathbf{d}y$ was arbitrary, $\mathbf{d}\circ\mathbin{<^\mathbf{d}}\leq\,\mathbf{d}$.
\end{proof}

\begin{cor}\label{<dunder}
If $\ \overline{\mathbf{d}}\ \circ\leq^\mathbf{d}\ \leq\,\mathbf{d}\ $ and $\ \underline{\mathbf{d}}\circ\mathbin{\leq^{\underline{\mathbf{d}}\mathcal{P}}}\,\precapprox\,\underline{\mathbf{d}}\mathcal{P}\ $ then $\ \leq^{\underline{\mathbf{d}}}\ =\underline{<^\mathbf{d}}$.
\end{cor}

\begin{proof}  By \autoref{<d}, \autoref{hemiprop} and \autoref{hemi<d} (for $\underline{\mathbf{d}}$),
\[(\overline{\mathbf{d}}\ \circ<^\mathbf{d})\leq(\overline{\mathbf{d}}\ \circ\leq^\mathbf{d}\circ<^{\underline{\mathbf{d}}})\leq(\mathbf{d}\ \circ<^{\underline{\mathbf{d}}})=(\mathbf{d}\circ\underline{\mathbf{d}}\ \circ<^{\underline{\mathbf{d}}})\leq(\mathbf{d}\circ\underline{\mathbf{d}})=\mathbf{d}.\]
Thus $\leq^{\underline{\mathbf{d}}}\ =\underline{<^\mathbf{d}}$, by \autoref{under<d}.
\end{proof}

For example, \eqref{minima} and hence $\leq^\mathbf{d}\ =\underline{<^\mathbf{d}}$ holds in $C_0(X)_+$, where again $f\mathbf{d}g=\sup_{x\in X}(f(x)-g(x))_+$.  Indeed, for any $f\in C_0(X)_+$ and $r\in[0,\infty]$, we see that $(f-r)_+$ is the $\leq^\mathbf{d}$-minimum of the closed upper $\mathbf{d}$-ball $\overline{f}_r^\bullet$ with centre $f$ and radius $r$.

But if we consider the opposite hemimetric on $C_0(X)$ given by $f\mathbf{e}g=\sup_{x\in X}(g(x)-f(x))_+$ and $X$ is not compact then $<^\mathbf{e}$ is vacuous, owing the fact any $f,g\in C_0(X)_+$ must vanish at infinity.  This means $\underline{<^\mathbf{e}}$ is trivial, i.e. $f\underline{<^\mathbf{e}}\,g$ for arbitrary $f,g\in C_0(X)_+$.  On the other hand, here $\leq^{\underline{\mathbf{e}}}\ =\ \leq^\mathbf{e}$ is just the opposite of the pointwise ordering on $C_0(X)_+$.  In particular, $\leq^{\underline{\mathbf{e}}}$ is not trivial, so the inclusion in \eqref{lequnder} is strict.

Also $\overline{\mathbf{d}}\ \circ<^\mathbf{d}\ \leq\mathbf{d}$ and hence $\leq^{\underline{\mathbf{d}}}\ =\underline{<^\mathbf{d}}$ holds in spaces formal balls, which will be crucial in our future work when we look at generalized (pre)domains.

\section{\bf Nets}\label{N}

We consider nets in a slightly more general sense than usual.  Specifically, as we deal with non-hemimetric distances, we must also deal with non-reflexive nets (to allow for $\mathbf{d}$-Cauchy nets even when $\leq^\mathbf{d}$ is not reflexive).  So by a net we mean a non-empty set indexed by a directed set $\Lambda$, i.e. we have (possibly non-reflexive) transitive $\mathbin{\prec}\subseteq\Lambda\times\Lambda$ satisfying
\[\forall\gamma,\delta\ \exists\lambda\ (\gamma,\delta\prec\lambda).\]

As usual, we define limits by
\begin{equation}\label{convergencedef}
x_\lambda\rightarrow x\quad\Leftrightarrow\quad\forall\text{ open }O\ni x\ \exists\gamma\in\Lambda\ (x_\lambda)_{\lambda\succ\gamma}\subseteq O
\end{equation}
In fact, these are also the limits with respect to the preorder $\preceq$ given by
\[x\preceq y\qquad\Leftrightarrow\qquad(\prec x)\subseteq(\prec y)\]
as in \eqref{sqsubunder}.  Also note that is suffices to verify \eqref{convergencedef} for all open $O$ in a subbasis $\mathcal{S}$ for the topology.  Indeed, as nets are indexed by directed sets, if \eqref{convergencedef} holds for all $O\in\mathcal{S}$ then \eqref{convergencedef} holds for all finite intersections of elements of $\mathcal{S}$ and hence for all unions of finite intersections of elements of $\mathcal{S}$, i.e. all open sets.  In particular, for any topologies $\mathcal{T}$ and $\mathcal{U}$, convergence in their supremum $\mathcal{T}\vee\mathcal{U}(=$ the topology with subbasis $\mathcal{T}\cup\mathcal{U}$) is the same as convergence in both $\mathcal{T}$ and $\mathcal{U}$, i.e.
\begin{equation}\label{joinconverge}
x_\lambda\xrightarrow{\mathcal{T}\vee\mathcal{U}}x\qquad\Leftrightarrow\qquad x_\lambda\xrightarrow{\mathcal{T}}x\quad\text{and}\quad x_\lambda\xrightarrow{\mathcal{U}}x.
\end{equation}

Limits in $[-\infty,\infty]$ are considered with respect to the usual interval topology and limits inferior and superior are defined as usual by
\begin{align*}
\liminf_\lambda r_\lambda&=\lim_\gamma\inf_{\gamma\prec\lambda}r_\lambda.\\
\limsup_\lambda r_\lambda&=\lim_\gamma\sup_{\gamma\prec\lambda}r_\lambda.
\end{align*}
Note limits inferior/superior are below/above infima/suprema, i.e.
\begin{equation}\label{liminflimsup}
\inf_\lambda r_\lambda\leq\liminf_\lambda r_\lambda\leq\limsup_\lambda r_\lambda\leq\sup_\lambda r_\lambda.
\end{equation}
Also, $(r_\lambda)$ converges in $[-\infty,\infty]$ iff
\begin{equation}\label{convergence}
\limsup_\lambda r_\lambda\leq\liminf_\lambda r_\lambda,
\end{equation}
in which case $\lim_\lambda r_\lambda=\limsup_\lambda r_\lambda=\liminf_\lambda r_\lambda$.

We also use a number of standard facts like
\begin{align}
\label{liminfinf}\liminf_\lambda(r_\lambda+s_\lambda)&\geq\liminf_\lambda r_\lambda+\liminf_\lambda s_\lambda.\\
\label{liminfsup}\liminf_\lambda(r_\lambda+s_\lambda)&\leq\liminf_\lambda r_\lambda+\limsup_\lambda s_\lambda\leq\limsup_\lambda(r_\lambda+s_\lambda).\\
\label{limsupsup}&\quad\limsup_\lambda r_\lambda+\limsup_\lambda s_\lambda\geq\limsup_\lambda(r_\lambda+s_\lambda).
\end{align}
Note these are only valid when we do not end up with $\infty-\infty$ in the middle, which is not a problem on $[-t,\infty]$, for any $t\in[0,\infty)$.  Also,
\[\liminf_\lambda(-r_\lambda)=-\limsup_\lambda r_\lambda.\]
Indeed, in the finite case this follows from \eqref{liminfsup} by taking $s_\lambda=-r_\lambda$, while the infinite case can be verified directly.  Also, as $r\mapsto r_+$ is continuous and (non-strictly) increasing on $[-\infty,\infty]$, we have
\[\liminf_\lambda(r_{\lambda+})=(\liminf_\lambda r_\lambda)_+\qquad\text{and}\qquad\limsup_\lambda(r_{\lambda+})=(\limsup_\lambda r_\lambda)_+.\]
For example, combining these facts yields
\begin{equation}\label{s-r}
\limsup_\lambda((s-r_\lambda)_+)=(\limsup_\lambda(s-r_\lambda))_+=(s-\liminf_\lambda r_\lambda)_+,
\end{equation}
as long as $s$ or $\liminf_\lambda r_\lambda$ is finite.

Let us adopt the convention that when nets are written on the left of $\mathbf{d}$ we take the limit superior, while on the right we take the limit inferior:
\begin{align*}
(x_\lambda)\mathbf{d}x&=\limsup_\lambda x_\lambda\mathbf{d}x.\\
x\mathbf{d}(x_\lambda)&=\liminf_\gamma x\mathbf{d}x_\lambda.
\end{align*}
We also extend this notation to unary functions, defining
\begin{align*}
(x_\lambda)\mathbf{d}&=\limsup_\lambda x_\lambda\mathbf{d}.\\
\mathbf{d}(x_\lambda)&=\liminf_\gamma\mathbf{d}x_\lambda.
\end{align*}
(The limits here are pointwise, i.e. in the product topology of $[0,\infty]^X$).

To avoid repetition, from now on we assume $X=Y$, i.e.
\[\textbf{We are given a set $X$ and functions }\mathbf{d},\mathbf{e}\in[0,\infty]^{X\times X}.\]

\begin{prp}\label{trinet}
For any $(z_\lambda)\subseteq X$,
\[x(\mathbf{d}\circ\mathbf{e})y\leq x\mathbf{d}(z_\lambda)+(z_\lambda)\mathbf{e}y.\]
\end{prp}

\begin{proof}
By \eqref{liminfsup},
\[\inf_{z\in Z}(x\mathbf{d}z+z\mathbf{e}y)\leq\liminf_\lambda(x\mathbf{d}z_\lambda+z_\lambda\mathbf{e}y)\leq\liminf_\lambda(x\mathbf{d}z_\lambda)+\limsup_\lambda(z_\lambda\mathbf{e}y).\]
\end{proof}

\section{\bf Cauchy Nets}\label{CN}

\begin{dfn}\label{CNdfn}
For any net $(x_\lambda)\subseteq X$, define
\begin{align}
\label{pre-Cauchy}\lim_\gamma\limsup_\delta x_\gamma\mathbf{d}x_\delta=0\quad&\Leftrightarrow\quad(x_\lambda)\text{ is \emph{$\mathbf{d}$-pre-Cauchy}}.\\
\label{Cauchy}\lim_\gamma\sup_{\gamma\prec\delta} x_\gamma\mathbf{d}x_\delta=0\quad&\Leftrightarrow\quad(x_\lambda)\text{ is \emph{$\mathbf{d}$-Cauchy}}.
\end{align}
\end{dfn}

Equivalently, $(x_\lambda)$ is $\mathbf{d}$-Cauchy if and only if
\[\lim_{\gamma\prec\delta}x_\gamma\mathbf{d}x_\delta=0,\]
when we consider $\prec$ itself as a directed subset of $\Lambda\times\Lambda$ with respect to the product ordering $\prec\times\prec$.  These nets are `increasing modulo $\epsilon$', in a certain sense.  More precisely, they can be characterized by $<^\mathbf{d}_\epsilon$:
\begin{align*}
\forall\epsilon>0\ \exists\gamma_0\ \forall\gamma\succ\gamma_0\ \exists\delta_0\ \forall\delta\succ\delta_0\ (x_\gamma<^\mathbf{d}_\epsilon x_\delta)\quad&\Leftrightarrow\quad(x_\lambda)\text{ is \emph{$\mathbf{d}$-pre-Cauchy}}.\\
\forall\epsilon>0\ \exists\gamma_0\ \forall\gamma\succ\gamma_0\ \hspace{18pt}\forall\delta\succ\gamma\hspace{6pt}(x_\gamma<^\mathbf{d}_\epsilon x_\delta)\quad&\Leftrightarrow\quad(x_\lambda)\text{ is \emph{$\mathbf{d}$-Cauchy}}.
\end{align*}

In particular, if $\sqsubset$ is a transitive relation then the $\sqsubset$-Cauchy nets are precisely the increasing nets, at least beyond a certain point $\gamma_0$.  On the other hand, the $\sqsubset$-pre-Cauchy nets are the `directed nets' from \cite[Definition O-1.2]{GierzHofmannKeimelLawsonMisloveScott2003}.  In the literature on hemimetrics, $\mathbf{d}$-Cauchy nets are more often considered than $\mathbf{d}$-pre-Cauchy nets (a notable exception is \cite{Wagner1997}, where sequences that we would call pre-Cauchy/Cauchy are called Cauchy/strongly Cauchy respectively).  However, most results on $\mathbf{d}$-Cauchy nets can be generalized to $\mathbf{d}$-pre-Cauchy nets without difficulty, as we demonstrate, and these are sometimes more convenient to work with (e.g. it suffices to consider $\mathbf{d}$-pre-Cauchy nets indexed by posets, while with $\mathbf{d}$-Cauchy nets we must consider more general transitive relations).

On the other hand, from a metric space point of view, both \eqref{pre-Cauchy} and \eqref{Cauchy} extend the usual notion of a Cauchy net.  

\begin{prp}\label{symCauchy}
If $\mathbf{d}$ is a symmetric distance, i.e. $\mathbf{d}=\mathbf{d}^\mathrm{op}\leq\mathbf{d}\circ\mathbf{d}$,
\[(x_\lambda)\text{ is $\mathbf{d}$-Cauchy}\qquad\Leftrightarrow\qquad(x_\lambda)\text{ is $\mathbf{d}$-pre-Cauchy}.\]
\end{prp}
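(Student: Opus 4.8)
The plan is to prove the two implications separately, noting that only one of them actually uses the symmetry and triangle hypotheses. The implication $\mathbf{d}$-Cauchy $\Rightarrow$ $\mathbf{d}$-pre-Cauchy is immediate and needs no hypotheses at all: for each $\gamma$ one has $\limsup_\delta x_\gamma\mathbf{d}x_\delta=\inf_\beta\sup_{\beta\prec\delta}x_\gamma\mathbf{d}x_\delta\leq\sup_{\gamma\prec\delta}x_\gamma\mathbf{d}x_\delta$, by taking $\beta=\gamma$ in the infimum, so $\lim_\gamma\sup_{\gamma\prec\delta}x_\gamma\mathbf{d}x_\delta=0$ forces $\lim_\gamma\limsup_\delta x_\gamma\mathbf{d}x_\delta=0$. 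Equivalently, in the $\Phi^\mathbf{d}$-characterizations one simply takes $\beta=\gamma$. The substantive content is the converse.

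For $\mathbf{d}$-pre-Cauchy $\Rightarrow$ $\mathbf{d}$-Cauchy, I would fix $\epsilon>0$ and aim to produce $\alpha$ with $x_\gamma\mathbf{d}x_\delta<\epsilon$ whenever $\gamma\succ\alpha$ and $\delta\succ\gamma$. By pre-Cauchyness, $\lim_\gamma\limsup_\delta x_\gamma\mathbf{d}x_\delta=0$, so I can choose $\alpha$ with $\limsup_\eta x_\mu\mathbf{d}x_\eta<\epsilon/2$ for every $\mu\succ\alpha$; unwinding the $\limsup$, each such $\mu$ comes with a threshold $\beta_\mu$ for which $x_\mu\mathbf{d}x_\eta<\epsilon/2$ whenever $\eta\succ\beta_\mu$. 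Now given $\gamma\succ\alpha$ and $\delta\succ\gamma$ (so also $\delta\succ\alpha$ by transitivity of $\prec$), I use directedness of $\Lambda$ to pick a single $\eta$ with $\eta\succ\beta_\gamma$ and $\eta\succ\beta_\delta$. Then $x_\gamma\mathbf{d}x_\eta<\epsilon/2$ and $x_\delta\mathbf{d}x_\eta<\epsilon/2$; symmetry turns the latter into $x_\eta\mathbf{d}x_\delta=x_\delta\mathbf{d}x_\eta<\epsilon/2$, and the triangle inequality $\mathbf{d}\leq\mathbf{d}\circ\mathbf{d}$ gives $x_\gamma\mathbf{d}x_\delta\leq x_\gamma\mathbf{d}x_\eta+x_\eta\mathbf{d}x_\delta<\epsilon$. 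Hence $\sup_{\gamma\prec\delta}x_\gamma\mathbf{d}x_\delta\leq\epsilon$ for all $\gamma\succ\alpha$, and as $\epsilon$ was arbitrary, $\lim_\gamma\sup_{\gamma\prec\delta}x_\gamma\mathbf{d}x_\delta=0$, i.e. $(x_\lambda)$ is $\mathbf{d}$-Cauchy.

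The crux, and the only place the hypotheses enter, is bridging the two indices through a third point $x_\eta$. Pre-Cauchyness controls the forward terms $x_\gamma\mathbf{d}x_\eta$ and $x_\delta\mathbf{d}x_\eta$ for $\eta$ large, but the triangle inequality forces me to bound the backward term $x_\eta\mathbf{d}x_\delta$, about which pre-Cauchyness says nothing directly; this is exactly where symmetry is indispensable. The remaining delicacy is that the threshold $\beta_\mu$ depends on $\mu$, so a priori the controls for $\gamma$ and for $\delta$ kick in at different stages of the net, and it is directedness of $\Lambda$ that lets me absorb both into one $\eta$. I expect no further obstacles, and the same routing argument should survive under the weaker hypotheses $\mathbf{d}\approx\mathbf{d}^{\mathrm{op}}\precapprox\mathbf{d}\circ\mathbf{d}$ of the footnote, since $\precapprox$ and $\approx$ are designed precisely so that these $\epsilon$-threshold manipulations go through up to uniform comparison.
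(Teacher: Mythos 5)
Your proof is correct and takes essentially the same approach as the paper: the forward direction is the same triviality, and the converse is the same one-step symmetry-plus-triangle interpolation through a third point of the net. The only difference is cosmetic: the paper routes through a single fixed \emph{early} index, choosing $\alpha,\beta$ so that $x_\alpha\mathbf{d}x_\gamma<\epsilon$ for all $\gamma\succ\beta$ and then bounding $x_\gamma\mathbf{d}x_\delta\leq x_\alpha\mathbf{d}x_\gamma+x_\alpha\mathbf{d}x_\delta$ (symmetry reversing the first leg), whereas you route through a common \emph{late} index $x_\eta$ (symmetry reversing the second leg), at the small extra cost of the per-index thresholds $\beta_\mu$ and the appeal to directedness of $\Lambda$, both of which the paper's choice of anchor avoids.
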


\begin{proof}
The $\Rightarrow$ part is immediate.  Conversely, if $(x_\lambda)\subseteq X$ is $\mathbf{d}$-pre-Cauchy then, for every $\epsilon>0$, we have $\alpha,\beta$ such that, for all $\gamma\succ\beta$, $x_\alpha\mathbf{d}x_\gamma<\epsilon$.  Thus, for all $\delta\succ\gamma$,  $\mathbf{d}=\mathbf{d}^\mathrm{op}\leq\mathbf{d}\circ\mathbf{d}$ yields $x_\gamma\mathbf{d}x_\delta\leq x_\alpha\mathbf{d}x_\gamma+x_\alpha\mathbf{d}x_\delta<2\epsilon$, i.e. $(x_\lambda)$ is $\mathbf{d}$-Cauchy.
\end{proof}

Here are a few basic but important facts about pre-Cauchy nets.  Note a version of \eqref{dunder} below appears in \cite[Theorem 2.26]{Wagner1997}.

\begin{thm}\label{Clim}\
\begin{enumerate}
\item\label{preCauchysub} If $(x_\lambda)\subseteq X$ is $\mathbf{d}$-pre-Cauchy then $(x_\lambda)$ has a $\mathbf{d}$-Cauchy subnet.

\item\label{dunder}  If $(x_\lambda)\subseteq X$ is $\overline{\mathbf{d}}$-pre-Cauchy then $x_\lambda\mathbf{d}$ converges (pointwise).

\item\label{dover}  If $(x_\lambda)\subseteq X$ is $\underline{\mathbf{d}}$-pre-Cauchy then $\mathbf{d}x_\lambda$ converges (pointwise) and
\begin{equation}\label{Climeq}
(x_\lambda)\underline{\mathbf{d}}y=\sup_{x\in X}(x\mathbf{d}y-x\mathbf{d}(x_\lambda))_+.
\end{equation}

\item\label{dCdist}  If $(x_\lambda)\subseteq X$ is $\mathbf{d}$-pre-Cauchy and $\mathbf{d}$ is a distance then
\begin{equation}\label{epreC}
\mathbf{d}(x_\lambda)=\overline{\mathbf{d}}(x_\lambda)\qquad\text{and}\qquad(x_\lambda)\mathbf{d}=(x_\lambda)\underline{\mathbf{d}}.
\end{equation}
\end{enumerate}
\end{thm}

\begin{proof}\
\begin{itemize}
\item[\eqref{preCauchysub}]  If $\Lambda$ is finite then it has a maximum $\gamma$, which means the single element net $x_\gamma$ is a $\mathbf{d}$-Cauchy subnet.  Otherwise, let $|F|$ denote the cardinality of $F$ and consider the finite subsets of $\Lambda$
\[\mathcal{F}(\Lambda)=\{F\subseteq\Lambda:|F|<\infty\}\]
directed by $\subsetneqq$.  We define a map $f:\mathcal{F}(\Lambda)\setminus\{\emptyset\}\rightarrow\Lambda$ recursively as follows.  Let $f(\{\lambda\})=\lambda$, for all $\lambda\in\Lambda$.  Given $F\in\mathcal{F}(\Lambda)\setminus\{\emptyset\}$, take $f(F)\in\Lambda$ such that, for all $E\subsetneqq F$, $f(E)\prec f(F)$ and
\[x_{f(E)}\mathbf{d}x_{f(F)}\leq\limsup_\lambda x_{f(E)}\mathbf{d}x_\lambda+2^{-|E|}.\]
In particular, $\lambda\prec f(F)$ whenever $\lambda\in F\neq\{\lambda\}$.  This means that $\{f(F):F\in\mathcal{F}(\Lambda)\setminus\{\emptyset\}\}$ is cofinal in $\Lambda$ and hence $(x_{f(F)})$ is a subnet of $(x_\lambda)$, which yields the second $\leq$ in
\begin{align*}
\limsup_{E\in\mathcal{F}(\Lambda)\setminus\{\emptyset\}}\sup_{E\subsetneqq F}x_{f(E)}\mathbf{d}x_{f(F)}&\leq\limsup_{E\in\mathcal{F}(\Lambda)\setminus\{\emptyset\}}\limsup_\lambda(x_{f(E)}\mathbf{d}x_\lambda+2^{-|E|})\\
&=\limsup_{E\in\mathcal{F}(\Lambda)\setminus\{\emptyset\}}\limsup_\lambda x_{f(E)}\mathbf{d}x_\lambda\\
&\leq\limsup_\gamma\limsup_\lambda x_\gamma\mathbf{d}x_\lambda.\\
&=0
\end{align*}
Thus $(x_{f(F)})$ is a $\mathbf{d}$-Cauchy subnet of $(x_\lambda)$.\\

\item[\eqref{dunder}]  If $(x_\lambda)$ is $\overline{\mathbf{d}}$-pre-Cauchy then, for all $y\in X$,
\begin{align*}
&\quad\limsup_\lambda x_\lambda\mathbf{d}y\\
&\leq\limsup_\lambda\inf_{z\in X}(x_\lambda\overline{\mathbf{d}}z+z\mathbf{d}y)\quad\text{as $\mathbf{d}=\overline{\mathbf{d}}\circ\mathbf{d}$, by \autoref{hemiprop},}\\
&\leq\limsup_\lambda\inf_\gamma(x_\lambda\overline{\mathbf{d}}x_\gamma+x_\gamma\mathbf{d}y)\\
&\leq\limsup_\lambda\liminf_\gamma(x_\lambda\overline{\mathbf{d}}x_\gamma+x_\gamma\mathbf{d}y)\quad\text{by \eqref{liminflimsup}}\\
&\leq\limsup_\lambda(\limsup_\gamma x_\lambda\overline{\mathbf{d}}x_\gamma+\liminf_\gamma x_\gamma\mathbf{d}y)\quad\text{by \eqref{liminfsup}}\\
&=\limsup_\lambda\limsup_\gamma x_\lambda\overline{\mathbf{d}}x_\gamma+\liminf_\gamma x_\gamma\mathbf{d}y\\
&=\liminf_\gamma x_\gamma\mathbf{d}y\quad\text{as $(x_\lambda)$ is $\overline{\mathbf{d}}$-pre-Cauchy}.
\end{align*}
Thus $x_\lambda\mathbf{d}y$ converges, by \eqref{convergence}.\\

\item[\eqref{dover}] If $(x_\lambda)$ is $\underline{\mathbf{d}}$-pre-Cauchy then, for all $y\in X$,
\begin{align*}
&\quad\limsup_\lambda y\mathbf{d}x_\lambda\\
&\leq\inf_{z\in X}\limsup_\lambda(y\mathbf{d}z+z\underline{\mathbf{d}}x_\lambda)\quad\text{as $\mathbf{d}=\mathbf{d}\circ\underline{\mathbf{d}}$, by \autoref{hemiprop},}\\
&\leq\inf_\gamma\limsup_\lambda(y\mathbf{d}x_\gamma+x_\gamma\underline{\mathbf{d}}x_\lambda)\\
&\leq\liminf_\gamma\limsup_\lambda(y\mathbf{d}x_\gamma+x_\gamma\underline{\mathbf{d}}x_\lambda)\quad\text{by \eqref{liminflimsup}}\\
&=\liminf_\gamma(y\mathbf{d}x_\gamma+\limsup_\lambda x_\gamma\underline{\mathbf{d}}x_\lambda)\\
&\leq\liminf_\gamma y\mathbf{d}x_\gamma+\limsup_\gamma\limsup_\lambda x_\gamma\underline{\mathbf{d}}x_\lambda\quad\text{by \eqref{liminfsup}}\\
&=\liminf_\gamma y\mathbf{d}x_\gamma\quad\text{as $(x_\lambda)$ is $\underline{\mathbf{d}}$-pre-Cauchy}.
\end{align*}
Thus $y\mathbf{d}x_\lambda$ converges, by \eqref{convergence}.\\

\item[\eqref{Climeq}]  First note that
\begin{align*}
\sup_{z\in X}(z\mathbf{d}y-z\mathbf{d}(x_\lambda))_+&=\sup_{z\in X}(z\mathbf{d}y-\liminf_\lambda z\mathbf{d}x_\lambda)_+\\
&\leq\sup_{z\in X}\limsup_\lambda(z\mathbf{d}y-z\mathbf{d}x_\lambda)_+\quad\text{by \eqref{s-r}}\\
\intertext{(if $\liminf_\lambda z\mathbf{d}x_\lambda<\infty$, otherwise $(z\mathbf{d}y-\liminf_\lambda z\mathbf{d}x_\lambda)_+=0$)}
&\leq\sup_{z\in X}\limsup_\lambda x_\lambda\underline{\mathbf{d}}y\quad\text{by \eqref{dunderdef}}\\
&=\limsup_\lambda x_\lambda\underline{\mathbf{d}}y\\
&=(x_\lambda)\underline{\mathbf{d}}y.
\end{align*}
For the converse, take $\epsilon\in(0,\infty)$ and replace the $\underline{\mathbf{d}}$-pre-Cauchy net $(x_\lambda)$ with a subnet if necessary so that, for all $\gamma$,
\[\limsup_\lambda x_\gamma\underline{\mathbf{d}}x_\lambda<\epsilon.\]
Note this suffices to prove the result for the original net as we already know that $x\mathbf{d}x_\lambda$ converges, by \eqref{dover}, and $x_\lambda\underline{\mathbf{d}}y$ converges, by \eqref{dunder} (note applying $\overline{\phantom{\mathbf{d}}}$ to the hemimetric $\underline{\mathbf{d}}$ leaves it unchanged)

We first claim that, for all $z\in Z$,
\begin{equation}\label{infsupclaim}
\limsup_\gamma(z\mathbf{d}y-z\mathbf{d}x_\gamma)_+=(z\mathbf{d}y-\liminf_\gamma z\mathbf{d}x_\gamma)_+.
\end{equation}
If $\liminf_\gamma z\mathbf{d}x_\gamma<\infty$ then this follows from \eqref{s-r}.  If $\liminf_\gamma z\mathbf{d}x_\gamma=\infty$ then, using the fact that $\mathbf{d}=\mathbf{d}\circ\underline{\mathbf{d}}$ by \autoref{hemiprop},
\[\infty=\liminf_\lambda z\mathbf{d}x_\lambda\leq z\mathbf{d}x_\gamma+\liminf_\lambda x_\gamma\underline{\mathbf{d}}x_\lambda<z\mathbf{d}x_\gamma+\epsilon,\]
for all $\gamma$.  Thus $\infty=z\mathbf{d}x_\gamma$, for all $\gamma$, so
\[\limsup_\gamma(z\mathbf{d}y-z\mathbf{d}x_\gamma)_+=0=(z\mathbf{d}y-\liminf_\gamma z\mathbf{d}x_\gamma)_+,\]
again proving the claim.

Now consider
\[(x_\lambda)\underline{\mathbf{d}}y=\limsup_\lambda\sup_{z\in X}(z\mathbf{d}y-z\mathbf{d}x_\lambda)_+.\]
As $z\mathbf{d}x_\gamma\leq z\mathbf{d}x_\lambda+x_\lambda\underline{\mathbf{d}}x_\gamma$, by $\mathbf{d}=\mathbf{d}\circ\underline{\mathbf{d}}$ from \autoref{hemiprop}, it follows that $-z\mathbf{d}x_\lambda\leq-z\mathbf{d}x_\gamma+x_\lambda\underline{\mathbf{d}}x_\gamma$, by \eqref{a<b+c}, and hence
\begin{align*}
(x_\lambda)\underline{\mathbf{d}}y&\leq\limsup_\lambda\sup_{z\in X}\inf_\gamma(z\mathbf{d}y+(-z\mathbf{d}x_\gamma+x_\lambda\underline{\mathbf{d}}x_\gamma))_+\\
&\leq\limsup_\lambda\sup_{z\in X}\limsup_\gamma(z\mathbf{d}y+(-z\mathbf{d}x_\gamma+x_\lambda\underline{\mathbf{d}}x_\gamma))_+\quad\text{by \eqref{liminflimsup}}\\
&\leq\limsup_\lambda\sup_{z\in X}\limsup_\gamma((z\mathbf{d}y-z\mathbf{d}x_\gamma)+x_\lambda\underline{\mathbf{d}}x_\gamma)_+\quad\text{as }\limsup_\gamma x_\lambda\underline{\mathbf{d}}x_\gamma<\epsilon\\
\intertext{(and $a+(-b+c)\leq(a-b)+c$ whenever $c\in[0,\infty)$ \textendash\, see \eqref{a+(-b+c)})}
&\leq\limsup_\lambda\sup_{z\in X}\limsup_\gamma((z\mathbf{d}y-z\mathbf{d}x_\gamma)_++x_\lambda\underline{\mathbf{d}}x_\gamma)\quad\text{as }(a+b)_+\leq a_++b_+\\
&\leq\limsup_\lambda\sup_{z\in X}(\limsup_\gamma(z\mathbf{d}y-z\mathbf{d}x_\gamma)_++\limsup_\gamma x_\lambda\underline{\mathbf{d}}x_\gamma)\quad\text{by \eqref{limsupsup}}\\
&=\sup_{z\in X}\limsup_\gamma(z\mathbf{d}y-z\mathbf{d}x_\gamma)_++\limsup_\lambda\limsup_\gamma x_\lambda\underline{\mathbf{d}}x_\gamma\\
&=\sup_{z\in X}\limsup_\gamma(z\mathbf{d}y-z\mathbf{d}x_\gamma)_+\quad\text{as $(x_\lambda)$ is $\underline{\mathbf{d}}$-pre-Cauchy}\\
&=\sup_{z\in X}(z\mathbf{d}y-\liminf_\gamma z\mathbf{d}x_\gamma)_+\quad\text{by \eqref{infsupclaim}}\\
&=\sup_{z\in X}(z\mathbf{d}y-z\mathbf{d}(x_\lambda))_+.
\end{align*}

\item[\eqref{dCdist}]  By \eqref{dis}, $\overline{\mathbf{d}}\leq\mathbf{d}$ so we have $\overline{\mathbf{d}}(x_\lambda)\leq\mathbf{d}(x_\lambda)$.  Conversely,
\begin{align*}
y\mathbf{d}(x_\lambda)&=\liminf_\lambda y\mathbf{d}x_\lambda\\
&\leq\inf_\gamma\liminf_\lambda(y\overline{\mathbf{d}}x_\gamma+x_\gamma\mathbf{d}x_\lambda)\quad\text{as $\mathbf{d}=\overline{\mathbf{d}}\circ\mathbf{d}$ by \autoref{hemiprop}}\\
&\leq\liminf_\gamma\liminf_\lambda(y\overline{\mathbf{d}}x_\gamma+x_\gamma\mathbf{d}x_\lambda)\quad\text{by \eqref{liminflimsup}}\\
&=\liminf_\gamma(y\overline{\mathbf{d}}x_\gamma+\liminf_\lambda x_\gamma\mathbf{d}x_\lambda)\\
&\leq\liminf_\gamma y\overline{\mathbf{d}}x_\gamma+\limsup_\gamma\limsup_\lambda x_\gamma\mathbf{d}x_\lambda\quad\text{by \eqref{liminfsup}}\\
&=\liminf_\gamma y\overline{\mathbf{d}}x_\gamma\quad\text{as $(x_\lambda)$ is $\mathbf{d}$-pre-Cauchy}\\
&=y\overline{\mathbf{d}}(x_\lambda).
\end{align*}
Again by \eqref{dis}, $\underline{\mathbf{d}}\leq\mathbf{d}$ so $(x_\lambda)\underline{\mathbf{d}}\leq(x_\lambda)\mathbf{d}$, while conversely,
\begin{align*}
(x_\lambda)\mathbf{d}y&=\limsup_\gamma x_\gamma\mathbf{d}y\\
&\leq\limsup_\gamma\inf_\lambda(x_\gamma\mathbf{d}x_\lambda+x_\lambda\underline{\mathbf{d}}y)\quad\text{as $\mathbf{d}=\mathbf{d}\circ\underline{\mathbf{d}}$ by \autoref{hemiprop}}\\
&\leq\limsup_\gamma\limsup_\lambda(x_\gamma\mathbf{d}x_\lambda+x_\lambda\underline{\mathbf{d}}y)\quad\text{by \eqref{liminflimsup}}\\
&\leq\limsup_\gamma(\limsup_\lambda x_\gamma\mathbf{d}x_\lambda+\limsup_\lambda x_\lambda\underline{\mathbf{d}}y)\quad\text{by \eqref{limsupsup}}\\
&=\limsup_\gamma\limsup_\lambda x_\gamma\mathbf{d}x_\lambda+\limsup_\lambda x_\lambda\underline{\mathbf{d}}y\\
&=\limsup_\lambda x_\lambda\underline{\mathbf{d}}y\quad\text{as $(x_\lambda)$ is $\mathbf{d}$-pre-Cauchy}\\
&=(x_\lambda)\underline{\mathbf{d}}y.\qedhere
\end{align*}
\end{itemize}
\end{proof}

\section{\bf Holes}\label{Topology}
Define the open upper/lower holes with centre $c\in X$ and radius $r$ by
\begin{align*}
c^\circ_r\quad=\hspace{13pt}\mathrel{>^\mathbf{d}_r}c\quad&=\quad\{x\in X:x\mathbf{d}c>r\}.\\
c_\circ^r\quad=\quad c\mathrel{>^\mathbf{d}_r}\quad&=\quad\{x\in X:c\mathbf{d}x>r\}.
\end{align*}
Note these are defined just like open balls in \eqref{highballs} and \eqref{lowballs} but with $<$ reversed.  Let $\mathbf{d}^\circ$, $\mathbf{d}_\circ$, $\mathbf{d}^\circ_\circ$, $\mathbf{d}^\bullet_\bullet$, $\mathbf{d}^\bullet_\circ$ and $\mathbf{d}^\circ_\bullet$ denote the topologies generated by the corresponding balls and holes, i.e. by arbitrary unions of finite intersections, e.g. $\mathbf{d}_\circ$ is the topology with subbasis $(x^r_\circ)_{x\in X,r\in(0,\infty)}$ and $\mathbf{d}^\bullet_\circ=\mathbf{d}^\bullet\vee\mathbf{d}_\circ$ etc..  As with balls, we could even take $r\in[0,\infty]$, as $x_\circ^0=\bigcup_{r\in(0,\infty)}x_\circ^r$ and $\emptyset=x_\circ^\infty$.  Beware that in general these subbases are not bases \textendash\, for hemimetric $\mathbf{d}$, the balls form a basis for the ball topologies, by \cite[Lemma 6.1.5]{Goubault2013}, but even this can fail for more general distances.

Up until now, most of the literature has focused on ball topologies.  However, as mentioned in \cite[Exercise 6.2.11]{Goubault2013}, hole topologies generalize the upper topology from order theory.  This allows for simple generalizations of certain order theoretic concepts.  Also, the double hole topology $\mathbf{d}^\circ_\circ$ coincides with various kinds of weak topologies, although this too does not appear to be widely recognized.  For example, the double hole topology is the usual product topology on products of bounded intervals, the weak operator topology on projections on a Hilbert space and the Wijsman topology on subsets of $X$ (see \cite[Examples 5 and 6 and \S5.3]{Bice2015v4}).

We denote convergence in $\mathbf{d}^\bullet$, $\mathbf{d}_\circ$, $\mathbf{d}^\bullet_\circ$, etc. by $\barrow$, $\arrowc$, $\barrowc$, etc..

\begin{prp}\label{convchars}
For any net $(x_\lambda)\subseteq X$,
\begin{align}
\label{abdef}x_\lambda\arrowb x\quad&\Leftrightarrow\quad(x_\lambda)\mathbf{d}\leq x\mathbf{d}.\\
\label{acdef}x_\lambda\arrowc x\quad&\Leftrightarrow\quad\mathbf{d}(x_\lambda)\geq\mathbf{d}x.
\end{align}
\end{prp}

\begin{proof}\
\begin{itemize}
\item[\eqref{abdef}]  Recall that for convergence it suffices to consider subbasic open sets, in this case the balls $y^r_\bullet$, for $y\in X$ and $r\in(0,\infty)$.  So $x_\lambda\arrowb x$ means that, for all $y\in X$ and $r\in(0,\infty)$, if $x\in y^r_\bullet$ then $(x_\lambda)_{\lambda\succ\gamma}\subseteq y^r_\bullet$, for some $\gamma$.  Thus if $x\mathbf{d}y<r$ then $\limsup_\lambda x_\lambda\mathbf{d}y\leq r$.  As $r$ and $y$ were arbitrary, this means $\limsup_\lambda x_\lambda\mathbf{d}y\leq x\mathbf{d}y$ and hence $(x_\lambda)\mathbf{d}\leq x\mathbf{d}$.  Conversely, if $(x_\lambda)\mathbf{d}\leq x\mathbf{d}$, i.e. $\limsup_\lambda x_\lambda\mathbf{d}y\leq x\mathbf{d}y$, for all $y\in X$, then $x\mathbf{d}y<r$ implies that $\limsup_\lambda x_\lambda\mathbf{d}y<r$, for all $r\in(0,\infty)$, and hence $x_\lambda\arrowb x$.

\item[\eqref{acdef}]  Likewise $x_\lambda\arrowc x$ means that, for all $y\in X$ and $r\in(0,\infty)$, if $x\in y^r_\circ$ then $(x_\lambda)_{\lambda\succ\gamma}\subseteq y^r_\circ$, for some $\gamma$.  Thus if $y\mathbf{d}x>r$ then $\liminf_\lambda y\mathbf{d}x_\lambda\geq r$.  As $r$ and $y$ were arbitrary, this means $\liminf_\lambda y\mathbf{d}x_\lambda\geq y\mathbf{d}x$ and hence $\mathbf{d}(x_\lambda)\geq\mathbf{d}x$.  Conversely, if $\mathbf{d}(x_\lambda)\geq\mathbf{d}x$, i.e. $\liminf_\lambda y\mathbf{d}x_\lambda\geq y\mathbf{d}x$, for all $y\in X$, then $y\mathbf{d}x>r$ implies that $\liminf_\lambda y\mathbf{d}x_\lambda>r$, for all $r\in(0,\infty)$, and hence $x_\lambda\arrowc x$.\qedhere
\end{itemize}
\end{proof}

Likewise,
\begin{align}
\label{badef}x_\lambda\barrow x\qquad\Leftrightarrow\qquad(x_\lambda)\mathbf{d}^\mathrm{op}\leq x\mathbf{d}^\mathrm{op}\qquad\Leftrightarrow\qquad&\limsup_\lambda\mathbf{d}x_\lambda\leq\mathbf{d}x.\\
\label{cadef}x_\lambda\carrow x\qquad\Leftrightarrow\qquad\mathbf{d}^\mathrm{op}(x_\lambda)\geq\mathbf{d}^\mathrm{op}x\qquad\Leftrightarrow\qquad&\liminf_\lambda x_\lambda\mathbf{d}\geq x\mathbf{d}.
\end{align}
As $x_\lambda\barrowc x$ if and only if $x_\lambda\barrow x$ and $x_\lambda\arrowc x$, by \eqref{joinconverge}, and $r_\lambda\rightarrow r$ if and only if $\limsup_\lambda r_\lambda\leq r\leq\liminf_\lambda r_\lambda$, and likewise for $x_\lambda\carrowb x$, we have
\begin{align}
\label{baclim}x_\lambda\barrowc x\qquad&\Leftrightarrow\qquad\lim_\lambda\mathbf{d}x_\lambda=\mathbf{d}x.\\
\label{cablim}x_\lambda\carrowb x\qquad&\Leftrightarrow\qquad\lim_\lambda x_\lambda\mathbf{d}=x\mathbf{d}.
\end{align}

In general, these convergence notions depend on all $\mathbf{d}$ values, not just the small ones.  In particular, without extra assumptions, they can not be characterized by statements like $x_\lambda\mathbf{d}x\rightarrow0$ familiar from metric space theory.  However, there are still some general relationships of this sort.

\begin{prp}\label{convchar}
\begin{align}
\label{ac}x_\lambda\underline{\mathbf{d}}x\rightarrow0\quad&\Rightarrow\quad x_\lambda\arrowc x.\\
\label{ab}x_\lambda\overline{\mathbf{d}}x\rightarrow0\quad&\Rightarrow\quad x_\lambda\arrowb x.\\
\label{ab<=}x_\lambda\mathbf{d}x\rightarrow0\quad&\Leftarrow\quad x_\lambda\arrowb x\leq^\mathbf{d}x.\\
\label{abvee}x_\lambda\underline{\mathbf{d}}^\vee x\rightarrow0\quad&\Rightarrow\quad x_\lambda\barrowc x.
\end{align}
\end{prp}

\begin{proof}  Recall from \autoref{hemiprop} that $\mathbf{d}=\overline{\mathbf{d}}\circ\mathbf{d}=\mathbf{d}\circ\underline{\mathbf{d}}$.
\begin{itemize}
\item[\eqref{ac}]  If $x_\lambda\mathbf{\underline{d}}x\rightarrow0$ then $c\mathbf{d}x\leq\liminf_\lambda(c\mathbf{d}x_\lambda+x_\lambda\mathbf{\underline{d}}x)=c\mathbf{d}(x_\lambda)$.

\item[\eqref{ab}]  If $x_\lambda\mathbf{\overline{d}}x\rightarrow0$ then, as $\mathbf{d}=\overline{\mathbf{d}}\circ\mathbf{d}$ yields $x_\lambda\mathbf{\overline{d}}x+x\mathbf{d}c\geq x_\lambda\mathbf{d}c$, \eqref{a<b+c} yields $x\mathbf{d}c\geq\limsup_\lambda(x_\lambda\mathbf{d}c-x_\lambda\mathbf{\overline{d}}x)=(x_\lambda)\mathbf{d}c$.

\item[\eqref{ab<=}]  If $x_\lambda\arrowb x\leq^\mathbf{d}x$ then $\limsup_\lambda x_\lambda\mathbf{d}x=(x_\lambda)\mathbf{d}x\leq x\mathbf{d}x=0$.

\item[\eqref{abvee}]  If $x_\lambda\underline{\mathbf{d}}^\vee x\rightarrow0$ then $x_\lambda\underline{\mathbf{d}}x\rightarrow0$ so $x_\lambda\arrowc x$, by \eqref{ac}, but also $x_\lambda\overline{\mathbf{d}^\mathrm{op}}x=x_\lambda\underline{\mathbf{d}}^\mathrm{op}x\rightarrow0$ so $x_\lambda\barrow x$, by \eqref{ab}.
\qedhere
\end{itemize}
\end{proof}

In \cite{Goubault2013} Definition 7.1.15, any $x$ with $(x_\lambda)\mathbf{d}=x\mathbf{d}$ is called a \emph{$\mathbf{d}$-limit} of $(x_\lambda)$ (these are called \emph{forward limits} in \cite{Bonsangue1998} before Proposition 3.3 and just \emph{limits} in \cite{KunziSchellekens2002} Definition 11).  In general, $\mathbf{d}$-limits are not true limits in any topological sense, as they are not preserved by taking subnets.  For example, if we consider $x\mathbf{d}y=(x-y)_+$ on $\{0,1\}$ and take the sequence $(x_n)$ defined by $x_{2n}=0$ and $x_{2n+1}=1$, for all $n$, then $(x_n)\mathbf{d}=1\mathbf{d}$ while $(x_{2n})\mathbf{d}=0\mathbf{d}$.  But for $\mathbf{\overline{d}}$-pre-Cauchy nets, $\mathbf{d}$-limits are $\mathbf{d}^\circ_\bullet$-limits.

\begin{prp}\label{dlimits}
If $(x_\lambda)$ is $\mathbf{\overline{d}}$-pre-Cauchy with subnet $(y_\gamma)$ then
\begin{align}
\label{cab}x_\lambda\carrowb x\quad&\Leftrightarrow\quad(x_\lambda)\mathbf{d}=x\mathbf{d}\quad\Leftrightarrow\quad y_\gamma\carrowb x.\\
\intertext{If $(x_\lambda)$ is $\mathbf{\underline{d}}$-pre-Cauchy with subnet $(y_\gamma)$ then}
\label{bac}x_\lambda\barrowc x\quad&\Leftrightarrow\quad\mathbf{d}(x_\lambda)=\mathbf{d}x\quad\Leftrightarrow\quad y_\gamma\barrowc x.\\
\label{bac2}x_\lambda\barrowc x\quad&\Rightarrow\quad(x_\lambda)\underline{\mathbf{d}}=x\underline{\mathbf{d}}.\\
\label{barrowc}x_\lambda\barrowc x\quad&\Leftarrow\quad (x_\lambda)\underline{\mathbf{d}}=x\underline{\mathbf{d}}\quad\text{and}\quad x_\lambda\barrowc y,\text{ for some }y\in X.\\
\intertext{If $(x_\lambda)$ is $\mathbf{d}$-pre-Cauchy and $\mathbf{d}$ is a distance then}
\label{arrowc}x_\lambda\arrowc x\quad&\Leftrightarrow\quad x_\lambda\mathbf{d}x\rightarrow0.\\
\label{carrowc}x_\lambda\carrowc x\quad&\Leftrightarrow\quad x_\lambda\carrowb x\leq^\mathbf{d}x.
\end{align}
\end{prp}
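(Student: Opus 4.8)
The plan is to treat the six statements in turn, using throughout the fact, established in \autoref{Topology}, that convergence in a topology generated by a union of two families of balls and/or holes is just convergence in each of the two separately. Thus, recalling \eqref{abdef} and \eqref{acdef}, $x_\lambda\carrowb x$ means $\liminf_\lambda x_\lambda\mathbf{d}\geq x\mathbf{d}$ (this is $\carrow$) together with $(x_\lambda)\mathbf{d}\leq x\mathbf{d}$ (this is $\arrowb$), and dually $x_\lambda\barrowc x$ means $\limsup_\lambda\mathbf{d}x_\lambda\leq\mathbf{d}x$ (this is $\barrow$) together with $\mathbf{d}(x_\lambda)\geq\mathbf{d}x$ (this is $\arrowc$). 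For \eqref{cab} the point is that \autoref{Clim}\eqref{dunder} makes $x_\lambda\mathbf{d}$ converge pointwise, so $(x_\lambda)\mathbf{d}=\lim_\lambda x_\lambda\mathbf{d}$ and the two halves of $\carrowb$ collapse to the single equation $(x_\lambda)\mathbf{d}=x\mathbf{d}$, which is the first equivalence. A subnet $(y_\gamma)$ is again $\overline{\mathbf{d}}$-pre-Cauchy (routine) and, being a subnet of a pointwise convergent net, has $(y_\gamma)\mathbf{d}=(x_\lambda)\mathbf{d}$; applying the first equivalence to $(y_\gamma)$ then yields the second. Statement \eqref{bac} is proved identically with \autoref{Clim}\eqref{dover} (pointwise convergence of $\mathbf{d}x_\lambda$) in place of \eqref{dunder}.

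Statement \eqref{bac2} is then immediate: by \eqref{bac}, $x_\lambda\barrowc x$ says $\mathbf{d}(x_\lambda)=\mathbf{d}x$, and substituting into \eqref{Climeq} gives $(x_\lambda)\underline{\mathbf{d}}y=X(\mathbf{d}y-\mathbf{d}(x_\lambda))_+=X(\mathbf{d}y-\mathbf{d}x)_+=x\underline{\mathbf{d}}y$. For \eqref{barrowc} I would apply \eqref{bac2} to the hypothesis $x_\lambda\barrowc y$ to get $(x_\lambda)\underline{\mathbf{d}}=y\underline{\mathbf{d}}$, which with the hypothesis $(x_\lambda)\underline{\mathbf{d}}=x\underline{\mathbf{d}}$ forces $x\underline{\mathbf{d}}=y\underline{\mathbf{d}}$. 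The key observation is that $\underline{\mathbf{d}}$ determines $\mathbf{d}x$ up to equality: since $\underline{\mathbf{d}}$ is a hemimetric (\autoref{hemiprop}) we have $z\underline{\mathbf{d}}z=0$, while $a\underline{\mathbf{d}}b=0$ unwinds to $\mathbf{d}b\leq\mathbf{d}a$, so evaluating $x\underline{\mathbf{d}}=y\underline{\mathbf{d}}$ at $y$ and at $x$ gives $\mathbf{d}y\leq\mathbf{d}x$ and $\mathbf{d}x\leq\mathbf{d}y$. Hence $\mathbf{d}x=\mathbf{d}y$, so that $x_\lambda\barrowc y$ already reads $\mathbf{d}(x_\lambda)=\mathbf{d}x$, and \eqref{bac} returns $x_\lambda\barrowc x$.

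Statement \eqref{arrowc} is where the real work lies, and its forward direction is the step I expect to be the main obstacle. The easy direction $\Leftarrow$ uses only the triangle inequality: from $y\mathbf{d}x\leq y\mathbf{d}x_\lambda+x_\lambda\mathbf{d}x$ and $x_\lambda\mathbf{d}x\to0$ we read off $y\mathbf{d}x\leq\liminf_\lambda y\mathbf{d}x_\lambda$ for every $y$, i.e. $\mathbf{d}x\leq\mathbf{d}(x_\lambda)$, which is $x_\lambda\arrowc x$. For $\Rightarrow$, note first that $\mathbf{d}$ being a distance makes $(x_\lambda)$ also $\underline{\mathbf{d}}$-pre-Cauchy by \eqref{dis}, so $\mathbf{d}x_\lambda$ converges pointwise by \autoref{Clim}\eqref{dover}; thus $\arrowc$ gives $y\mathbf{d}x\leq\lim_\lambda y\mathbf{d}x_\lambda$ for every $y$. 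The crux is to substitute the moving point $y=x_\gamma$, yielding $x_\gamma\mathbf{d}x\leq\limsup_\delta x_\gamma\mathbf{d}x_\delta$, and then to take $\limsup_\gamma$: the right-hand side becomes $(x_\lambda)(x_\lambda)\mathbf{d}^\mathrm{op}$, which vanishes by $\mathbf{d}$-pre-Cauchyness, forcing $x_\lambda\mathbf{d}x\to0$.

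Finally \eqref{carrowc} reduces cleanly to \eqref{arrowc}. Since $x_\lambda\carrowc x$ means $\carrow$ and $\arrowc$ hold jointly, while $x_\lambda\carrowb x$ means $\carrow$ and $\arrowb$ hold jointly, the factor $\carrow$ is common to both sides, so it suffices to show, in the presence of $\carrow$, that $\arrowc$ is equivalent to $\arrowb$ together with $x\leq^\mathbf{d}x$. For $\Rightarrow$, \eqref{arrowc} turns $\arrowc$ into $x_\lambda\mathbf{d}x\to0$; evaluating $\carrow$ at the point $x$ then gives $x\mathbf{d}x=0$, i.e. $x\leq^\mathbf{d}x$, and the triangle inequality $x_\lambda\mathbf{d}y\leq x_\lambda\mathbf{d}x+x\mathbf{d}y$ promotes $x_\lambda\mathbf{d}x\to0$ to $(x_\lambda)\mathbf{d}\leq x\mathbf{d}$, i.e. $\arrowb$. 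For $\Leftarrow$, evaluating $\arrowb$ at $x$ together with $x\leq^\mathbf{d}x$ gives $\limsup_\lambda x_\lambda\mathbf{d}x\leq x\mathbf{d}x=0$, hence $x_\lambda\mathbf{d}x\to0$, which \eqref{arrowc} converts back into $\arrowc$.
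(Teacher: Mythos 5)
Your proof is correct, and on most of the proposition it follows the paper's own route: \eqref{cab} and \eqref{bac} via the pointwise convergence supplied by \autoref{Clim}, \eqref{bac2} by substituting $\mathbf{d}(x_\lambda)=\mathbf{d}x$ into \eqref{Climeq}, and \eqref{carrowc} by the same reduction to \eqref{arrowc} together with the triangle inequality and \eqref{ab<=}. You genuinely diverge at two points, both to good effect. For the forward direction of \eqref{arrowc}, the paper first replaces $(x_\lambda)\mathbf{d}$ by $(x_\lambda)\underline{\mathbf{d}}$ via \eqref{epreC} and then evaluates \eqref{Climeq} at $y=x$, getting $(x_\lambda)\underline{\mathbf{d}}x=X(\mathbf{d}x-\mathbf{d}(x_\lambda))_+\leq X(\mathbf{d}x-\mathbf{d}x)_+=0$; your diagonal argument \textendash\ substituting the moving point $y=x_\gamma$ into $y\mathbf{d}x\leq\limsup_\delta y\mathbf{d}x_\delta$ and taking $\limsup_\gamma$ so that the right-hand side becomes $(x_\lambda)(x_\lambda)\mathbf{d}^\mathrm{op}=0$ \textendash\ is more elementary, bypassing \eqref{epreC} and \eqref{Climeq} entirely, and it even shows this implication needs only $\mathbf{d}$-pre-Cauchyness and not the triangle inequality (the distance hypothesis enters only in the $\Leftarrow$ direction), whereas the paper's route merely reuses machinery it needs anyway for \eqref{bac2}. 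In \eqref{barrowc} the paper argues asymmetrically, deriving $x_\lambda\arrowc x$ from $(x_\lambda)\underline{\mathbf{d}}x=x\underline{\mathbf{d}}x=0$ and \eqref{ac}, and $x_\lambda\barrow x$ from $x\leq^{\underline{\mathbf{d}}}y$ together with $\mathbf{d}\leq\mathbf{d}\circ\underline{\mathbf{d}}$, which yields only $\mathbf{d}y\leq\mathbf{d}x$; your symmetric evaluation of $x\underline{\mathbf{d}}=y\underline{\mathbf{d}}$ at both points (using reflexivity of the hemimetric $\underline{\mathbf{d}}$ and the correct unwinding of $a\underline{\mathbf{d}}b=0$ as $\mathbf{d}b\leq\mathbf{d}a$) gives the stronger conclusion $\mathbf{d}x=\mathbf{d}y$ and then lets the characterization \eqref{bac} do all the work in both directions, which is slightly cleaner. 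One cosmetic remark: in \eqref{cab} your separate verification that the subnet is again $\overline{\mathbf{d}}$-pre-Cauchy, while true and routine, can be skipped, since pointwise convergence of $x_\lambda\mathbf{d}$ already forces $(y_\gamma)\mathbf{d}=(x_\lambda)\mathbf{d}$ and collapses the two halves of $\carrowb$ for the subnet as well, which is how the paper phrases it.
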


\begin{proof}\
\begin{itemize}
\item[\eqref{cab}]  If $x_\lambda\carrowb x$, i.e. $\lim_\lambda x_\lambda\mathbf{d}=x\mathbf{d}$ (see \eqref{cablim}) then certainly $\limsup_\lambda x_\lambda\mathbf{d}=x\mathbf{d}$, i.e. $(x_\lambda)\mathbf{d}=x\mathbf{d}$.  Conversely, if $\limsup_\lambda x_\lambda\mathbf{d}=x\mathbf{d}$ then $\lim_\lambda x_\lambda\mathbf{d}=x\mathbf{d}$, as $x_\lambda\mathbf{d}$ converges, by \autoref{Clim} \eqref{dunder}.  Likewise, as $x_\lambda\mathbf{d}$ converges, $\lim_\lambda x_\lambda\mathbf{d}=\lim y_\gamma\mathbf{d}$, for any subnet $(y_\gamma)$, so $\lim_\lambda x_\lambda\mathbf{d}=x\mathbf{d}$ if and only if $\lim_\gamma y_\gamma\mathbf{d}=x\mathbf{d}$.

\item[\eqref{bac}]  Apply \autoref{Clim} \eqref{dover} as above.

\item[\eqref{bac2}]  By \eqref{Climeq} and \eqref{bac},
\[(x_\lambda)\mathbf{\underline{d}}y=\sup_{z\in X}(z\mathbf{d}y-z\mathbf{d}(x_\lambda))_+=\sup_{z\in X}(z\mathbf{d}y-z\mathbf{d}x)_+=x\underline{\mathbf{d}}y.\]

\item[\eqref{barrowc}]  As $(x_\lambda)\underline{\mathbf{d}}x=x\underline{\mathbf{d}}x=0$, \eqref{ac} yields $x_\lambda\arrowc x$.  On the other hand, $x\underline{\mathbf{d}}y=(x_\lambda)\underline{\mathbf{d}}y=y\underline{\mathbf{d}}y=0$, where the second equality follows \eqref{bac2} and the $x_\lambda\barrowc y$ assumption.  As $\mathbf{d}\leq\mathbf{d}\circ\underline{\mathbf{d}}$, it follows that $\mathbf{d}y\leq\mathbf{d}x+x\underline{\mathbf{d}}y=\mathbf{d}x$.  Then $(x_\lambda)\mathbf{d}^\mathrm{op}=\mathbf{d}(x_\lambda)=\mathbf{d}y\leq\mathbf{d}x=x\mathbf{d}^\mathrm{op}$, i.e. $x_\lambda\barrow x$, where the first and second equalities follow from \eqref{baclim}.

\item[\eqref{arrowc}]  By \eqref{epreC}, $(x_\lambda)\mathbf{d}=(x_\lambda)\underline{\mathbf{d}}$.  Thus it suffices to prove
\[x_\lambda\arrowc x\quad\Leftrightarrow\quad x_\lambda\underline{\mathbf{d}}x\rightarrow0,\]
for $\underline{\mathbf{d}}$-pre-Cauchy $(x_\lambda)$.  The $\Leftarrow$ part is \eqref{ac}.  Conversely, \eqref{Climeq} yields
\[(x_\lambda)\mathbf{\underline{d}}x=\sup_{z\in X}(z\mathbf{d}x-z\mathbf{d}(x_\lambda))_+\leq\sup_{z\in X}(z\mathbf{d}x-z\mathbf{d}x)_+=0,\]
where the inequality follows from $x_\lambda\arrowc x$ and \eqref{acdef}.

\item[\eqref{carrowc}]  If $x_\lambda\carrowc x$ then $x_\lambda\mathbf{d}x\rightarrow0$, by \eqref{arrowc}, so $x\mathbf{d}x\leq(x_\lambda)\mathbf{d}x=0$, i.e. $x\leq^\mathbf{d}x$.  Also $x_\lambda\overline{\mathbf{d}}x\leq x_\lambda\mathbf{d}x\rightarrow0$, as $\overline{\mathbf{d}}\leq\mathbf{d}$ by \eqref{dis}, so $x_\lambda\arrowb x$, by \eqref{ab}.  This proves $\Rightarrow$, while \eqref{ab<=} and \eqref{arrowc} prove $\Leftarrow$.\qedhere
\end{itemize}
\end{proof}

For hemimetric $\mathbf{d}$, \eqref{ab} and \eqref{ab<=} show that $\mathbf{d}_\bullet$-convergence is equivalent to the statement $x_\lambda\mathbf{d}x\rightarrow0$ familiar from metric space theory.  But for general distance $\mathbf{d}$, it is rather $\mathbf{d}_\circ$-convergence that is characterized by $x_\lambda\mathbf{d}x\rightarrow0$, at least for $\mathbf{d}$-pre-Cauchy nets, by \eqref{arrowc}.

Also note \eqref{bac2} and \eqref{barrowc} describe a close relationship between $\mathbf{d}^\bullet_\circ$-limits and $\underline{\mathbf{d}}^\circ_\circ$-limits of $\underline{\mathbf{d}}$-pre-Cauchy $(x_\lambda)$ (as \eqref{cab} and \eqref{carrowc} show $x=\underline{\mathbf{d}}^\circ_\circ$-$\lim x_\lambda$ iff $(x_\lambda)\underline{\mathbf{d}}=x\underline{\mathbf{d}}$).  Namely, every $\mathbf{d}^\bullet_\circ$-limit of a $\underline{\mathbf{d}}$-pre-Cauchy net $(x_\lambda)$ is a $\underline{\mathbf{d}}^\circ_\circ$-limit, by \eqref{bac2}, while conversely the mere existence of a $\mathbf{d}^\bullet_\circ$-limit guarantees that any $\underline{\mathbf{d}}^\circ_\circ$-limit is a $\mathbf{d}^\bullet_\circ$-limit, by \eqref{barrowc}.

For a simple example of a $\mathbf{d}$-Cauchy net where $x_\lambda\carrowb x\nleq^\mathbf{d}x$ and hence $x_\lambda\not\hspace{-4pt}\carrowc x$, take any $x_\lambda\rightarrow0<x$ in $[0,\infty]$, taking $y\mathbf{d}z=z$ for $\mathbf{d}$.

\section{\bf Directed Subsets}\label{DirectedSubsets}

Directed subsets play a fundamental role in domain theory.  These correspond to increasing nets which are generalized by the (pre-)Cauchy-nets above, and this is usually considered the only path to quantitative domain theory.  However, an equally valid but subtly different theory can be obtained from a more direct generalization of directed subsets.

\begin{dfn}\label{ddirdef}
We call $Y\subseteq X$ \emph{$\mathbf{d}$-directed} if, for all finite $F\subseteq Y$,
\[\inf_{y\in Y}\sup_{x\in F}x\mathbf{d}y=0.\]
\end{dfn}
Equivalently, $Y$ is $\mathbf{d}$-directed if and only if
\[\forall\epsilon>0\ \forall F\in\mathcal{F}(Y)\ \exists y\in Y\ \forall x\in F\ (x<^\mathbf{d}_\epsilon y),\]
where $\mathcal{F}(Y)$ again denotes the finite subsets of $Y$.  In particular, for any transitive relation $\sqsubset$, $Y$ is $\sqsubset$-directed iff every finite subset of $Y$ has an upper bound w.r.t. $\sqsubset$, i.e. iff $Y$ is directed in the usual sense.

It will also be convenient to consider the following weaker notion obtained by restricting to singleton $F$.

\begin{dfn}\label{dfindef}
We call $Y\subseteq X$ \emph{$\mathbf{d}$-final} if, for all $x\in Y$,
\[\inf_{y\in Y}x\mathbf{d}y=0.\]
\end{dfn}
Equivalently, $Y$ is $\mathbf{d}$-final if and only if
\[\forall\epsilon>0\ \forall x\in Y\ \exists y\in Y\ (x<^\mathbf{d}_\epsilon y).\]
In particular, for any transitive relation $\sqsubset$, $Y$ is $\sqsubset$-final iff every single element $x$ has an upper bound $y\sqsupset x$.  In \cite{Keimel2016}, $\sqsubset$-final subsets are called `cofinal', while in \cite[Proposition III-4.3]{GierzHofmannKeimelLawsonMisloveScott2003} and \cite[Proposition 5.13]{Goubault2013} they are called `rounded', at least in the ideal case.  Note arbitrary subsets are $\mathbf{d}$-final when $\leq^\mathbf{d}$ is reflexive.  In particular, arbitrary subsets are $\sqsubseteq$-final when $\sqsubseteq$ is a preorder.

As with nets, let us adopt the convention that sets written on the left/right of a function denote suprema/infima, so
\begin{align*}
Z\mathbf{d}x&=\sup_{z\in Z}z\mathbf{d}x.\\
x\mathbf{d}Z&=\inf_{z\in Z}x\mathbf{d}z.
\end{align*}
Again we extend this to unary functions, i.e.
\begin{align*}
Z\mathbf{d}&=\sup_{z\in Z}z\mathbf{d}.\\
\mathbf{d}Z&=\inf_{z\in Z}\mathbf{d}z.
\end{align*}
For example, applying these conventions twice, for any $Y,Z\subseteq X$ we have
\begin{align*}
(Y\mathbf{d})Z&=(\sup_{y\in Y}y\mathbf{d})Z=\inf_{z\in Z}\sup_{y\in Y}y\mathbf{d}z.\\
Y(\mathbf{d}Z)&=Y(\inf_{z\in Z}\mathbf{d}z)=\sup_{y\in Y}\inf_{z\in Z}y\mathbf{d}z.
\end{align*}
So the definition of $\mathbf{d}$-directedness can thus be restated as follows
\[Y\text{ is \emph{$\mathbf{d}$-directed}}\qquad\Leftrightarrow\qquad\forall F\in\mathcal{F}(Y)\ (F\mathbf{d})Y=0.\]
In fact, for $\mathbf{d}$-directed $Y$, it does not matter where we put the parentheses.

\begin{prp}
If $\mathbf{d}$ is a distance and $Y$ is $\mathbf{d}$-final then
\begin{align}
\label{FdY}\forall F\in\mathcal{F}(X)\ (F\mathbf{d})Y=F(\mathbf{d}Y)\qquad&\Leftrightarrow\qquad Y\text{ is $\mathbf{d}$-directed}.\\
\label{YdYd}\overline{\mathbf{d}}Y=\mathbf{d}Y\qquad&\text{and}\qquad Y\underline{\mathbf{d}}=Y\mathbf{d}.
\end{align}
\end{prp}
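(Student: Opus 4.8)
The plan is to treat the two assertions separately, since both reduce to interchanging an inner $\inf_{y\in Y}$ with an outer operation, with the triangle inequality and $\mathbf{d}$-finality supplying the non-trivial inequalities. It is worth recording first that one half of \eqref{FdY} is automatic: writing $(F\mathbf{d})Y=\inf_{y\in Y}\sup_{v\in F}v\mathbf{d}y$ and $F(\mathbf{d}Y)=\sup_{v\in F}\inf_{y\in Y}v\mathbf{d}y$, the max-min inequality always gives $F(\mathbf{d}Y)\leq(F\mathbf{d})Y$, so in \eqref{FdY} it is only ever the reverse inequality that is at stake.

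For the $\Leftarrow$ direction of \eqref{FdY}, I would fix $F\in\mathcal{F}(X)$, set $r=F(\mathbf{d}Y)$ (assuming $r<\infty$, else there is nothing to prove), and for each $\epsilon>0$ choose, for every $v\in F$, a point $y_v\in Y$ with $v\mathbf{d}y_v<r+\epsilon$; this is possible since $\inf_{y\in Y}v\mathbf{d}y=(\mathbf{d}Y)(v)\leq r$. The finite set $G=\{y_v:v\in F\}$ then lies in $\mathcal{F}(Y)$, so $\mathbf{d}$-directedness gives $(G\mathbf{d})Y=0$, providing for each $\delta>0$ a single $y^*\in Y$ with $y_v\mathbf{d}y^*<\delta$ for all $v\in F$. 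The triangle inequality $\mathbf{d}\leq\mathbf{d}\circ\mathbf{d}$ now yields $v\mathbf{d}y^*\leq v\mathbf{d}y_v+y_v\mathbf{d}y^*<r+\epsilon+\delta$, whence $(F\mathbf{d})Y\leq\sup_{v\in F}v\mathbf{d}y^*\leq r+\epsilon+\delta$, and letting $\epsilon,\delta\to0$ closes the gap. For the $\Rightarrow$ direction I would specialise the assumed equality to $F\in\mathcal{F}(Y)\subseteq\mathcal{F}(X)$: since $Y$ is $\mathbf{d}$-final, $(\mathbf{d}Y)(v)=0$ for every $v\in Y$, so $F(\mathbf{d}Y)=0$, and the equality then forces $(F\mathbf{d})Y=0$, which is exactly $\mathbf{d}$-directedness.

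For \eqref{YdYd} I would establish $\overline{\mathbf{d}}Y=\mathbf{d}Y$ first. Since $\mathbf{d}$ is a distance, \eqref{dis} gives $\overline{\mathbf{d}}\leq\mathbf{d}$ and hence $\overline{\mathbf{d}}Y\leq\mathbf{d}Y$ at once. For the reverse I would invoke the identity $\mathbf{d}=\overline{\mathbf{d}}\circ\mathbf{d}$ from \autoref{hemiprop}, which gives $x\mathbf{d}y\leq x\overline{\mathbf{d}}y'+y'\mathbf{d}y$ for every $y'$; fixing $y'\in Y$ and taking the infimum over $y\in Y$ yields $(\mathbf{d}Y)(x)\leq x\overline{\mathbf{d}}y'+(\mathbf{d}Y)(y')$, and $\mathbf{d}$-finality ($(\mathbf{d}Y)(y')=0$) collapses the last term, so that $\inf_{y'\in Y}$ gives $\mathbf{d}Y\leq\overline{\mathbf{d}}Y$. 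The companion equality $Y\underline{\mathbf{d}}=Y\mathbf{d}$ is the parallel argument, using $\mathbf{d}=\mathbf{d}\circ\underline{\mathbf{d}}$ in place of $\mathbf{d}=\overline{\mathbf{d}}\circ\mathbf{d}$ together with the same use of finality; note it is not a literal $\mathrm{op}$-dual, since $\mathbf{d}$-finality is not itself $\mathrm{op}$-symmetric.

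I expect the main obstacle to be the $\Leftarrow$ direction of \eqref{FdY}: $\mathbf{d}$-directedness only controls finite subsets of $Y$, whereas the claimed equality quantifies over all finite $F\subseteq X$. The device of replacing each $v\in F$ by a near-infimizer $y_v\in Y$ — thereby converting the arbitrary $F\in\mathcal{F}(X)$ into a $G\in\mathcal{F}(Y)$ to which directedness applies — and then absorbing both errors through a single triangle-inequality step is the heart of the matter (and this is the only place the distance axiom is used). I would also keep an eye on the $[0,\infty]$-valued conventions ($\sup\emptyset=0$, $\inf\emptyset=\infty$) to confirm that the degenerate cases $r=\infty$, $F=\emptyset$ and $Y=\emptyset$ behave correctly, but these are routine once the core estimate is in place; it is also worth observing that finality is needed only for the $\Rightarrow$ half of \eqref{FdY} and for \eqref{YdYd}.
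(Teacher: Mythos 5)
Your proposal is correct and follows essentially the same route as the paper's proof: the max--min inequality for the easy half of \eqref{FdY}, near-infimizers $y_v\in Y$ converting $F\in\mathcal{F}(X)$ into a finite subset of $Y$ to which directedness applies (with one triangle-inequality step absorbing both errors), specialization to $F\in\mathcal{F}(Y)$ plus finality for the converse, and for \eqref{YdYd} the inequalities $\overline{\mathbf{d}},\underline{\mathbf{d}}\leq\mathbf{d}$ from \eqref{dis} combined with $\mathbf{d}\leq\overline{\mathbf{d}}\circ\mathbf{d}$ and $\mathbf{d}\leq\mathbf{d}\circ\underline{\mathbf{d}}$ routed through points of $Y$ and collapsed by finality. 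Your side remarks (that finality is unused in the $\Leftarrow$ half of \eqref{FdY}, and that the two halves of \eqref{YdYd} are not literal $\mathrm{op}$-duals since finality is not $\mathrm{op}$-symmetric) are accurate refinements of what the paper does implicitly.
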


\begin{proof}\
\begin{itemize}
\item[\eqref{FdY}]  If $Y$ is $\mathbf{d}$-final and, for all $F\in\mathcal{F}(X)$, $(F\mathbf{d})Y=F(\mathbf{d}Y)$ then in particular, for all $F\in\mathcal{F}(Y)$, we have $(F\mathbf{d})Y=F(\mathbf{d}Y)=0$, i.e. $Y$ is $\mathbf{d}$-directed.

For each $x\in F$, $x\mathbf{d}Y\leq(F\mathbf{d})Y$ so $F(\mathbf{d}Y)\leq(F\mathbf{d})Y$.  Conversely, say $Y$ is $\mathbf{d}$-directed and take $\epsilon>0$.  For each $x\in F$, we have $x'\in Y$ with $x\mathbf{d}x'\leq x\mathbf{d}Y+\epsilon\leq F(\mathbf{d}Y)+\epsilon$.  Then we can take $y\in Y$ with $F'\mathbf{d}y<\epsilon$, where $F'=\{x':x\in F\}$.  If $\mathbf{d}$ is a distance then $F\mathbf{d}y\leq F(\mathbf{d}Y)+2\epsilon$.  As $\epsilon>0$ was arbitrary, $(F\mathbf{d})Y\leq F(\mathbf{d}Y)$.

\item[\eqref{YdYd}]  If $\mathbf{d}$ is a distance then $Y\underline{\mathbf{d}}\leq Y\mathbf{d}$, by \eqref{dis}.  Conversely, note first that $\inf\limits_{r\in R,s\in S}(r+s)\leq\inf R+\sup S$, for all $R,S\subseteq[0,\infty]$, so
\[y(\mathbf{d}\circ\mathbf{\underline{d}})z\leq\inf_{w\in Y}(y\mathbf{d}w+w\underline{\mathbf{d}}z)\leq y\mathbf{d}Y+Y\underline{\mathbf{d}}z.\]
So if $Y$ is also $\mathbf{d}$-final then
\[Y\mathbf{d}z\leq Y(\mathbf{d}\circ\mathbf{\underline{d}})z=\sup_{y\in Y}y(\mathbf{d}\circ\mathbf{\underline{d}})z\leq\sup_{y\in Y}(y\mathbf{d}Y+Y\underline{\mathbf{d}}z)=Y\underline{\mathbf{d}}z.\]
Likewise $\overline{\mathbf{d}}Y\leq\mathbf{d}Y$, by \eqref{dis}, and conversely
\[z\mathbf{d}Y\leq z(\mathbf{\overline{d}}\circ\mathbf{d})Y\leq\inf_{x,y\in Y}(z\mathbf{\overline{d}}x+x\mathbf{d}y)=\inf_{x\in Y}(z\overline{\mathbf{d}}x+x\mathbf{d}Y)=z\overline{\mathbf{d}}Y.\qedhere\]
\end{itemize}
\end{proof}

Recall the standard topological notion of separability, namely that $X$ is \emph{$\mathcal{T}$-separable}, for some topology $\mathcal{T}$ on $X$, if $X$ contains a countable $\mathcal{T}$-dense subset $Y$, i.e. if every non-empty $O\in\mathcal{T}$ contains some $y\in Y$.

\begin{prp}\label{finsep}
If $\mathbf{d}$ is a distance then
\[X\text{ is \emph{$\mathbf{d}$-final and $\mathbf{d}^\bullet$-separable}}\quad\Leftrightarrow\quad X(\mathbf{d}Z)=0\text{ for some countable }Z.\]
\end{prp}

\begin{proof}
Assume $Z$ is $\mathbf{d}^\bullet$-dense in $X$.  If $X$ is $\mathbf{d}$-final then, for all $x\in X$ and $\epsilon>0$, $x^\bullet_\epsilon$ is non-empty and hence contains some $z\in Z$, i.e. $X(\mathbf{d}Z)=0$.  If $X$ is $\mathbf{d}^\bullet$-separable then we can choose $Z$ to be countable, proving $\Rightarrow$.

Conversely, if $X(\mathbf{d}Z)=0$ then certainly $X(\mathbf{d}X)=0$, i.e. $X$ is $\mathbf{d}$-final.  And if $O=(x_1)^\bullet_{\epsilon_1}\cap\cdots\cap(x_n)^\bullet_{\epsilon_n}$ is non-empty, for some $x_1,\cdots,x_n\in X$ and $\epsilon_1,\cdots,\epsilon_n>0$, then we can take $x\in O$ and $\epsilon>0$ such that $x_k\mathbf{d}x+\epsilon<\epsilon_k$, for all $k\leq n$.  As $\mathbf{d}$ is a distance, this means $x^\bullet_\epsilon\subseteq O$.  As $X(\mathbf{d}Z)=0$, we have some $z\in Z$ with $z\in x^\bullet_\epsilon\subseteq O$, so $Z$ is indeed dense in $X$.
\end{proof}

It will be useful to define what it means for a subset to be below a net and vice versa.  Specifically, for any $(x_\lambda)\subseteq X$ and $Y\subseteq X$, let
\begin{align*}
(x_\lambda)\leq^\mathbf{d}Y\qquad&\Leftrightarrow\qquad x_\lambda\mathbf{d}Y\rightarrow0.\\
Y\leq^\mathbf{d}(x_\lambda)\qquad&\Leftrightarrow\qquad\,y\mathbf{d}x_\lambda\rightarrow0,\text{ for all }y\in Y.
\end{align*}

\begin{prp}\label{Yxlam}
For any $(x_\lambda)\subseteq X$ and $Y\subseteq X$,
\begin{align}
\label{dYdxlam} Y\leq^\mathbf{d}(x_\lambda)\qquad&\Rightarrow\qquad Y\mathbf{d}\leq(x_\lambda)\underline{\mathbf{d}}\quad\text{and}\quad\overline{\mathbf{d}}Y\geq\mathbf{d}(x_\lambda).\\
\label{Ydxlamd} Y\geq^\mathbf{d}(x_\lambda)\qquad&\Rightarrow\qquad Y\underline{\mathbf{d}}\geq(x_\lambda)\mathbf{d}\quad\text{and}\quad\mathbf{d}Y\leq\overline{\mathbf{d}}(x_\lambda).
\end{align}
\end{prp}

\begin{proof}\
\begin{itemize}
\item[\eqref{dYdxlam}]  As $\mathbf{d}=\mathbf{d}\circ\underline{\mathbf{d}}=\overline{\mathbf{d}}\circ\mathbf{d}$, by \autoref{hemiprop}, $Y\leq^\mathbf{d}(x_\lambda)$ yields
\begin{gather*}
Y\mathbf{d}=\sup_{y\in Y}y\mathbf{d}\leq\sup_{y\in Y}\liminf_\lambda(y\mathbf{d}x_\lambda+x_\lambda\underline{\mathbf{d}})\leq(x_\lambda)\underline{\mathbf{d}}.\\
\mathbf{d}(x_\lambda)=\liminf_\lambda\mathbf{d}x_\lambda\leq\inf_{y\in Y}\liminf_\lambda(\overline{\mathbf{d}}y+y\mathbf{d}x_\lambda)=\overline{\mathbf{d}}Y.
\end{gather*}
\item[\eqref{Ydxlamd}]  Again as $\mathbf{d}=\mathbf{d}\circ\underline{\mathbf{d}}=\overline{\mathbf{d}}\circ\mathbf{d}$, by \autoref{hemiprop}, $Y\geq^\mathbf{d}(x_\lambda)$ yields
\begin{gather*}
(x_\lambda)\mathbf{d}=\limsup_\lambda x_\lambda\mathbf{d}\leq\limsup_\lambda(x_\lambda\mathbf{d}Y+Y\underline{\mathbf{d}})=Y\underline{\mathbf{d}}.\\
\mathbf{d}Y=\inf_{y\in Y}\mathbf{d}Y\leq\liminf_\lambda\inf_{y\in Y}(\overline{\mathbf{d}}x_\lambda+x_\lambda\mathbf{d}y)=\overline{\mathbf{d}}(x_\lambda).\qedhere
\end{gather*}
\end{itemize}
\end{proof}
Note that if $(y_\gamma)$ is a subnet of $(x_\lambda)$ then
\begin{align*}
(x_\lambda)\leq^\mathbf{d}Y\qquad&\Rightarrow\qquad(y_\gamma)\leq^\mathbf{d}Y.\\
Y\leq^\mathbf{d}(x_\lambda)\qquad&\Rightarrow\qquad Y\leq^\mathbf{d}(y_\gamma).
\end{align*}
The converses also hold for pre-Cauchy nets.

\begin{prp}\label{subnet<Y}
If $(y_\gamma)$ is a subnet of $(x_\lambda)$ then
\begin{align}
\label{ygamma<Y}(x_\lambda)\text{ is $\overline{\mathbf{d}}$-pre-Cauchy and }(y_\gamma)\leq^\mathbf{d}Y\qquad&\Rightarrow\qquad(x_\lambda)\leq^\mathbf{d}Y.\\
\label{Y<ygamma}(x_\lambda)\text{ is $\underline{\mathbf{d}}$-pre-Cauchy and }Y\leq^\mathbf{d}(y_\gamma)\qquad&\Rightarrow\qquad Y\leq^\mathbf{d}(x_\lambda).
\end{align}
\end{prp}

\begin{proof}\
\begin{itemize}
\item[\eqref{ygamma<Y}]  Assume $(x_\lambda)$ is $\overline{\mathbf{d}}$-pre-Cauchy and $(y_\gamma)\leq^\mathbf{d}Y$.  Then
\begin{align*}
x_\lambda\mathbf{d}Y&\leq\limsup_\gamma(x_\lambda\overline{\mathbf{d}}y_\gamma+y_\gamma\mathbf{d}Y)\\
&=\limsup_\gamma(x_\lambda\overline{\mathbf{d}}y_\gamma)\qquad\text{as }(y_\gamma)\leq^\mathbf{d}Y\\
&\leq\limsup_\delta(x_\lambda\overline{\mathbf{d}}x_\delta)\qquad\text{as $(y_\gamma)$ is a subnet}\\
&\rightarrow0\qquad\text{as $(x_\lambda)$ is $\overline{\mathbf{d}}$-pre-Cauchy}.
\end{align*}
Thus $(x_\lambda)\leq^\mathbf{d}Y$.

\item[\eqref{Y<ygamma}]  If $(x_\lambda)$ is $\underline{\mathbf{d}}$-pre-Cauchy then $y\mathbf{d}x_\lambda$ has a limit, for any $y$, by \autoref{Clim} \eqref{dover}.  So if $y\mathbf{d}y_\gamma=0$, for some subnet $(y_\gamma)$, this limit must be $0$.  Applied to all $y\in Y$, we see that $Y\leq^\mathbf{d}(y_\gamma)$ implies $Y\leq^\mathbf{d}(x_\lambda)$.\qedhere
\end{itemize}
\end{proof}

Defining $Y\leq^\mathbf{d}x$ to mean $y\leq^\mathbf{d}x$, for all $y\in Y$, we also see that
\begin{equation}\label{x<Yd}
Y\leq^\mathbf{d}(x_\lambda)\quad\text{and}\quad x_\lambda\arrowc x\qquad\Rightarrow\qquad Y\leq^\mathbf{d}x.
\end{equation}
Indeed if $y\in Y\leq^\mathbf{d}(x_\lambda)$ and $x_\lambda\arrowc x$ then $y\mathbf{d}x\leq y\mathbf{d}(x_\lambda)=0$, by \eqref{acdef}.

Different versions of quantitative domain theoretic concepts are connected via results about $\mathbf{d}$-directed subsets having equivalent $\mathbf{d}$-pre-Cauchy nets (and vice versa, a topic we will return to in \autoref{Completeness}).  Specifically, let
\[Y\equiv^\mathbf{d}(x_\lambda)\qquad\Leftrightarrow\qquad Y\leq^\mathbf{d}(x_\lambda)\leq^\mathbf{d}Y.\]

\begin{prp}\label{directedCauchy}
For any $Y\subseteq X$,
\begin{align}
\label{Yddir}\exists\text{ $\mathbf{d}$-Cauchy }(x_\lambda)\equiv^\mathbf{d}Y\qquad&\Leftarrow\qquad Y\text{ is $\mathbf{d}$-directed}.\\
\intertext{If $\mathbf{d}$ is a distance then}
\label{x<YdpC}(x_\lambda)\equiv^\mathbf{d}Y\qquad&\Rightarrow\qquad(x_\lambda)\text{ is $\mathbf{d}$-pre-Cauchy}.\\
\label{x<Yddir}\exists(x_\lambda)\equiv^\mathbf{d}Y\qquad&\Leftrightarrow\qquad Y\text{ is $\mathbf{d}$-directed}.
\intertext{If $\mathbf{d}$ is a distance and $X$ is $\overline{\mathbf{d}}^\bullet_\bullet$-separable then}
\label{sepdir}\exists(x_n)_{n\in\mathbb{N}}\equiv^\mathbf{d}Y\qquad&\Leftrightarrow\qquad Y\text{ is $\mathbf{d}$-directed}.
\end{align}
\end{prp}

\begin{proof}\
\begin{itemize}
\item[\eqref{Yddir}] If $Y$ is $\mathbf{d}$-directed then, for $F\in\mathcal{F}(Y)$ and $\epsilon>0$, take $y_{F,\epsilon}\in Y$ with $F\mathbf{d}y_{F,\epsilon}<\epsilon$.  Ordering $\mathcal{F}(Y)\times(0,\infty)$ by $\subseteq\times\geq$, we get $(y_{F,\epsilon})\subseteq Y\leq^\mathbf{d}(y_{F,\epsilon})$.  In particular, $(y_{F,\epsilon})$ is $\mathbf{d}$-pre-Cauchy.  By \autoref{Clim} \eqref{preCauchysub}, we can replace $(y_{F,\epsilon})$ with a $\mathbf{d}$-Cauchy subnet.  Lastly, note $(y_{F,\epsilon})\subseteq Y$ implies $(y_{F,\epsilon})(\mathbf{d}Y)\leq Y(\mathbf{d}Y)=0$, as $Y$ is $\mathbf{d}$-directed and hence $\mathbf{d}$-final, i.e. $(y_{F,\epsilon})\leq^\mathbf{d}Y$.

\item[\eqref{x<YdpC}]  If $(x_\lambda)\equiv^\mathbf{d}Y$ then, as $\mathbf{d}$ is a distance,
\begin{align*}
\limsup_\gamma\limsup_\delta x_\gamma\mathbf{d}x_\delta&\leq\limsup_\gamma\inf_{y\in Y}\limsup_\delta(x_\gamma\mathbf{d}y+y\mathbf{d}x_\delta)\\
&=\limsup_\gamma\inf_{y\in Y}(x_\gamma\mathbf{d}y+\limsup_\delta y\mathbf{d}x_\delta)\\
&=\limsup_\gamma x_\gamma\mathbf{d}Y\quad\text{as }Y\leq^\mathbf{d}(x_\lambda)\\
&=0\quad\text{as }(x_\lambda)\leq^\mathbf{d}Y.
\end{align*}

\item[\eqref{x<Yddir}]  If $(x_\lambda)\equiv^\mathbf{d}Y$ then, for any $F\in\mathcal{F}(Y)$,
\begin{align*}
(F\mathbf{d})Y&=\inf_{y\in Y}\sup_{x\in F}x\mathbf{d}y\\
&\leq\liminf_\lambda\inf_{y\in Y}\sup_{x\in F}(x\mathbf{d}x_\lambda+x_\lambda\mathbf{d}y)\\
&=\liminf_\lambda(F\mathbf{d}x_\lambda+x_\lambda\mathbf{d}Y)\\
&\leq(F\mathbf{d})(x_\lambda)+(x_\lambda)(\mathbf{d}Y)\\
&=0,
\end{align*}
as $(F\mathbf{d})(x_\lambda)=0$ because $Y\leq^\mathbf{d}(x_\lambda)$ and $(x_\lambda)(\mathbf{d}Y)=0$ because $(x_\lambda)\leq^\mathbf{d}Y$.  This shows $Y$ is $\mathbf{d}$-directed.  The converse is \eqref{Yddir}.

\item[\eqref{sepdir}]  Assume $\mathbf{d}$ is a distance, $X$ is $\overline{\mathbf{d}}^\bullet_\bullet$-separable and $Y$ is $\mathbf{d}$-directed.  As $\overline{\mathbf{d}}$ is hemimetric, $\overline{\mathbf{d}}^\bullet_\bullet=\overline{\mathbf{d}}^{\vee\bullet}$, by \cite[Proposition 6.1.19]{Goubault2013}.  Also $X$ is trivially $\overline{\mathbf{d}}^\vee$-final, so we have countable $Z\subseteq X$ with $X(\overline{\mathbf{d}}^\vee Z)=0$, by \autoref{finsep}.  Let $(z_n)_{n\in\mathbb{N}}$ enumerate $Z$ (note we do not consider $0$ to be an element of $\mathbb{N}$).  For each $n\in\mathbb{N}$, we can take $y_1,\cdots,y_n\in Y$ with $z_k\mathbf{d}y_k<z_k\mathbf{d}Y+1/n$, for all $k\leq n$.  Applying \autoref{ddirdef} to $F=\{y_1,\cdots,y_n\}$, we obtain $x_n\in Y$ with $F\mathbf{d}x_n<1/n$.  As $\mathbf{d}$ is a distance, this implies that $z_k\mathbf{d}x_n<z_k\mathbf{d}Y+2/n$, for all $k\leq n$.  For any $y\in Y$ and $\epsilon>0$, we have $N\in\mathbb{N}$ with $y(\overline{\mathbf{d}}^\vee)z_N<\epsilon$ and hence $z_N\mathbf{d}Y\leq z_N\overline{\mathbf{d}}y+y\mathbf{d}Y<\epsilon$, as $Y$ is $\mathbf{d}$-final.  Thus, for any $n\geq N$,
\[y\mathbf{d}x_n\leq y\overline{\mathbf{d}}z_N+z_N\mathbf{d}x_n<\epsilon+z_N\mathbf{d}Y+2/n\leq 2\epsilon+2/n.\]
As $\epsilon>0$ was arbitrary, $y\mathbf{d}x_n\rightarrow0$, so $(x_n)\subseteq Y\leq^\mathbf{d}(x_n)$.  This completes the proof of $\Leftarrow$, while $\Rightarrow$ follows from \eqref{x<Yddir}.\qedhere
\end{itemize}
\end{proof}

Mostly we use $\mathbf{d}$-directed subsets, but they can be replaced by $\mathbf{d}$-ideals.

\begin{dfn}\label{idealdef}
We call $I\subseteq X$ a \emph{$\mathbf{d}$-ideal} if, for all $F\in\mathcal{F}(X)$,
\[F\subseteq I\quad\Leftrightarrow\quad(F\mathbf{d})I=0.\]
\end{dfn}

Note that for the $\Leftarrow$ part it suffices to consider singleton $F$, i.e.
\begin{equation}\label{singletonleft}
x\in I\quad\Leftarrow\quad x\mathbf{d}I=0.
\end{equation}
For if $(F\mathbf{d})I=0$ then certainly $x\mathbf{d}I=0$, for all $x\in F$, so \eqref{singletonleft} yields $x\in I$, for all $x\in F$, and hence $F\subseteq I$.

\begin{prp}\label{dballclosureprp}
For distance $\mathbf{d}$, the $\overline{\mathbf{d}}^\bullet$-closure of $\mathbf{d}$-final $Y\subseteq X$ is
\begin{equation}\label{dballclosure}
\overline{Y}^{\overline{\bullet}}=\{x\in X:x\mathbf{d}Y=0\}.
\end{equation}
If $Y$ is $\mathbf{d}$-directed then $\overline{Y}^{\overline{\bullet}}$ is the smallest $\mathbf{d}$-ideal containing $Y$.
\end{prp}

\begin{proof}
Assume $\mathbf{d}$ is a distance and $x\mathbf{d}Y=0$.  Then whenever we have $c_1,\cdots,c_n\in X$ and $r_1,\cdots,r_n\in(0,\infty)$ with $x\in(c_1)_{r_1}^\bullet\cap\ldots\cap(c_n)_{r_n}^\bullet$, we can always find $y\in Y$ with $x\mathbf{d}y<(r_1-c_1\mathbf{d}x)\wedge\ldots\wedge(r_1-c_1\mathbf{d}x)$, as $x\mathbf{d}Y=0$.  It follows that $y\in(c_1)_{r_1}^\bullet\cap\ldots\cap(c_n)_{r_n}^\bullet$, as $\mathbf{d}$ is a distance.  Thus $x\in\overline{Y}^\bullet(=$ the $\mathbf{d}^\bullet$-closure of $Y)$.  Conversely, if $\leq^\mathbf{d}$ is reflexive and $x\mathbf{d}Y>\epsilon>0$ then $x^\bullet_\epsilon\cap Y=\emptyset$ while $x\in x^\bullet_\epsilon$, i.e. $x\notin\overline{Y}^\bullet$.  Thus if $\mathbf{d}$ is a hemimetric,
\[\overline{Y}^\bullet=\{x\in X:x\mathbf{d}Y=0\}.\]
If $\mathbf{d}$ is a distance and $Y$ is $\mathbf{d}$-final then \eqref{YdYd} and the above argument applied to the hemimetric $\overline{\mathbf{d}}$ shows the $\overline{\mathbf{d}}$-closure $\overline{Y}^{\overline{\bullet}}$ is given by \eqref{dballclosure}:
\[\overline{Y}^{\overline{\bullet}}=\{x\in X:x\overline{\mathbf{d}}Y=0\}=\{x\in X:x\mathbf{d}Y=0\}.\]
It follows that any $\mathbf{d}$-ideal $I$ containing $Y$ contains $\overline{Y}^{\overline{\bullet}}$, for if $0=x\mathbf{d}Y\geq x\mathbf{d}I$ then $x\in I$, by \eqref{singletonleft}.  But if $Y$ is $\mathbf{d}$-directed then, by \eqref{FdY},
\[F\subseteq\overline{Y}^{\overline{\bullet}}\quad\Leftrightarrow\quad F(\mathbf{d}Y)=(F\mathbf{d})Y=0\quad\Leftrightarrow\quad(F\mathbf{d})\overline{Y}^{\overline{\bullet}}=0.\]
For the last $\Leftrightarrow$, note that $(F\mathbf{d})\overline{Y}^{\overline{\bullet}}\leq(F\mathbf{d})Y$, as $Y\subseteq\overline{Y}^{\overline{\bullet}}$, and conversely
\[(F\mathbf{d})Y=\inf_{y\in Y}F\mathbf{d}y\leq\inf_{y\in Y,z\in\overline{Y}^{\overline{\bullet}}}(F\mathbf{d}z+z\mathbf{d}y)=\inf_{z\in\overline{Y}^{\overline{\bullet}}}(F\mathbf{d}z+z\mathbf{d}Y)=(F\mathbf{d})\overline{Y}^{\overline{\bullet}},\]
by \eqref{dballclosure}.  Thus $\overline{Y}^{\overline{\bullet}}$ itself is a $\mathbf{d}$-ideal.
\end{proof}

\begin{prp}\label{idealdiretedclosed}
If $I$ is $\mathbf{d}$-ideal then $I$ is $\mathbf{d}$-directed and $\overline{\mathbf{d}}^\bullet$-closed.  If $\mathbf{d}$ is a distance, any $\mathbf{d}$-directed $\overline{\mathbf{d}}^\bullet$-closed $I\subseteq X$ is a $\mathbf{d}$-ideal.
\end{prp}

\begin{proof}
If $I$ is $\mathbf{d}$-ideal then certainly $I$ is $\mathbf{d}$-directed.  In particular, $I$ is $\mathbf{d}$-final so $\mathbf{d}I\leq\overline{\mathbf{d}}I$ follows as in the proof of \eqref{YdYd}:
\[z\mathbf{d}I\leq z(\mathbf{\overline{d}}\circ\mathbf{d})I\leq\inf_{x,y\in I}(z\mathbf{\overline{d}}x+x\mathbf{d}y)=\inf_{x\in I}(z\overline{\mathbf{d}}x+x\mathbf{d}I)=z\overline{\mathbf{d}}I.\]
So if $x$ is in the $\overline{\mathbf{d}}^\bullet$-closure of $I$ then $x\mathbf{d}I\leq x\overline{\mathbf{d}}I=0$, by \eqref{dballclosure} (with $\overline{\mathbf{d}}$ replacing $\mathbf{d}$).  Thus $x\in I$, by the definition of $\mathbf{d}$-ideal, i.e. $I$ is $\overline{\mathbf{d}}^\bullet$-closed.

Conversely, assume $\mathbf{d}$ is a distance and $I$ is $\mathbf{d}$-directed and $\overline{\mathbf{d}}^\bullet$-closed.  In particular, the $\Rightarrow$ part of \autoref{idealdef} holds, as $I$ is $\mathbf{d}$-directed.  Also any $x$ with $x\mathbf{d}I=0$ is in $I$, by \eqref{dballclosure}, as $\mathbf{d}$ is a distance and $I$ is $\overline{\mathbf{d}}^\bullet$-closed and $\mathbf{d}$-final (even $\mathbf{d}$-directed).  This implies that the $\Leftarrow$ part of \autoref{idealdef} holds too, as noted in \eqref{singletonleft}.
\end{proof}

\section{\bf Upper Bounds}\label{UpperBounds}

Next we examine `$\mathbf{d}$-minimal' upper bounds of $\mathbf{d}$-directed subsets.

\begin{dfn}
Define \emph{$\mathbf{d}$-suprema} and \emph{$\mathbf{d}$-maxima} of $Y\subseteq X$ by
\begin{align}
\label{supdef}x=\text{$\mathbf{d}$-$\sup Y\ $}\qquad&\Leftrightarrow\qquad Y\leq^\mathbf{d}x\quad\text{and}\quad Y\mathbf{d}\geq x\mathbf{d}.\\
\label{maxdef}x=\text{$\mathbf{d}$-$\max Y$}\qquad&\Leftrightarrow\qquad Y\leq^\mathbf{d}x\quad\text{and}\quad\mathbf{d}Y\leq\mathbf{d}x.
\end{align}
\end{dfn}

Note $\mathbf{d}$-suprema and $\mathbf{d}$-maxima are not necessarily unique, so $=$ here is not really equality.  Put another way, we are officially taking $\mathbf{d}$-$\sup$ and $\mathbf{d}$-$\max$ as relations, not functions, and adding the $=$ symbol simply for consistency with standard supremum/maximum notation.  We consider $\mathbf{d}$-suprema and $\mathbf{d}$-maxima analogous to $\mathbf{d}^\circ_\circ$-limits and $\mathbf{d}^\bullet_\circ$-limits respectively, as indicated by the following analog of \autoref{dlimits}.

\begin{prp}\label{supmax=}
If $\mathbf{d}$ is a distance then, for any $Y\subseteq X$,
\begin{align}
\label{supxx}x=\text{$\mathbf{d}$-$\sup Y$}\qquad&\Leftrightarrow\qquad Y\mathbf{d}=x\mathbf{d}\quad\text{and}\quad x\leq^\mathbf{d}x.\\
\label{max=>sup}\text{$x=\mathbf{d}$-$\max Y$}\qquad&\Rightarrow\qquad\text{$x=\underline{\mathbf{d}}$-$\sup Y$}.\\
\intertext{If $\mathbf{d}$ is a distance and $Y\subseteq X$ is $\mathbf{d}$-final then}
\label{max1dir}\text{$x=\mathbf{d}$-$\max Y$}\qquad&\Leftrightarrow\qquad\mathbf{d}Y=\mathbf{d}x.\\
\label{Emax}\text{$x=\mathbf{d}$-$\max Y$}\qquad&\Leftarrow\qquad\text{$x=\underline{\mathbf{d}}$-$\sup Y\quad$and}\quad\text{$\exists\,y=\mathbf{d}$-$\max Y$}.
\end{align}
\end{prp}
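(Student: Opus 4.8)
The plan is to reduce all four parts to two pointwise identities and then let them cascade. Throughout, $\mathbf{d}$ is a distance, so the triangle inequality gives $z\mathbf{d}w\leq z\mathbf{d}u+u\mathbf{d}w$, and $\underline{\mathbf{d}}$ is a reflexive hemimetric by \autoref{hemiprop}. The two workhorses I would isolate first are: the purely extended-real identity (the subset analog of \eqref{Climeq})
\[Y\underline{\mathbf{d}}z=\sup_{y\in Y}\sup_{w\in X}(w\mathbf{d}z-w\mathbf{d}y)_+=\sup_{w\in X}(w\mathbf{d}z-w\mathbf{d}Y)_+=X(\mathbf{d}z-\mathbf{d}Y)_+,\]
obtained by swapping the two suprema and using that $(\cdot)_+$ commutes with suprema, so that $-\inf_{y}w\mathbf{d}y$ becomes $\sup_{y}(-w\mathbf{d}y)$; and the observation that $Y\leq^\mathbf{d}x$ forces $\mathbf{d}x\leq\mathbf{d}Y$, since $z\mathbf{d}x\leq z\mathbf{d}y+y\mathbf{d}x=z\mathbf{d}y$ for every $y\in Y$ and hence $z\mathbf{d}x\leq z\mathbf{d}Y$.

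For \eqref{supxx} I argue directly. If $x=\mathbf{d}$-$\sup Y$ then $Y\mathbf{d}x=0$, and evaluating $Y\mathbf{d}\geq x\mathbf{d}$ at $x$ gives $x\mathbf{d}x\leq Y\mathbf{d}x=0$, i.e. $x\leq^\mathbf{d}x$; the triangle inequality with $y\mathbf{d}x=0$ then yields $y\mathbf{d}z\leq x\mathbf{d}z$, so $Y\mathbf{d}\leq x\mathbf{d}$ and thus $Y\mathbf{d}=x\mathbf{d}$. Conversely $Y\mathbf{d}=x\mathbf{d}$ together with $x\leq^\mathbf{d}x$ gives both defining conditions (evaluating at $x$ for $Y\mathbf{d}x=x\mathbf{d}x=0$). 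For \eqref{max1dir}, the forward direction uses only the second workhorse: $Y\leq^\mathbf{d}x$ gives $\mathbf{d}x\leq\mathbf{d}Y$, which with the defining $\mathbf{d}Y\leq\mathbf{d}x$ yields $\mathbf{d}Y=\mathbf{d}x$ (no finality needed). The converse is exactly where $\mathbf{d}$-finality enters: evaluating $\mathbf{d}Y=\mathbf{d}x$ at a point $y\in Y$ gives $y\mathbf{d}x=y\mathbf{d}Y$, and $\mathbf{d}$-finality says $y\mathbf{d}Y=0$, so $Y\leq^\mathbf{d}x$; with $\mathbf{d}Y\leq\mathbf{d}x$ this is $x=\mathbf{d}$-$\max Y$.

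Part \eqref{max=>sup} then drops out: from $x=\mathbf{d}$-$\max Y$ we already have $\mathbf{d}Y=\mathbf{d}x$ (forward direction of \eqref{max1dir}), and the first workhorse converts this into $Y\underline{\mathbf{d}}=X(\mathbf{d}\cdot-\mathbf{d}Y)_+=X(\mathbf{d}\cdot-\mathbf{d}x)_+=x\underline{\mathbf{d}}$, whence \eqref{supxx} applied to the reflexive $\underline{\mathbf{d}}$ gives $x=\underline{\mathbf{d}}$-$\sup Y$. For \eqref{Emax} I would apply \eqref{max=>sup} to the hypothesized maximum $y$ so that $x$ and $y$ are both $\underline{\mathbf{d}}$-suprema of $Y$; by \eqref{supxx} for $\underline{\mathbf{d}}$ this means $x\underline{\mathbf{d}}=Y\underline{\mathbf{d}}=y\underline{\mathbf{d}}$. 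Evaluating this equality at $z=x$ and $z=y$ and using reflexivity ($x\underline{\mathbf{d}}x=y\underline{\mathbf{d}}y=0$) gives $y\underline{\mathbf{d}}x=0=x\underline{\mathbf{d}}y$, i.e.
\[X(\mathbf{d}x-\mathbf{d}y)_+=0=X(\mathbf{d}y-\mathbf{d}x)_+,\]
which is precisely $\mathbf{d}x=\mathbf{d}y$. Since $\mathbf{d}Y=\mathbf{d}y$ by \eqref{max1dir} applied to $y$, we conclude $\mathbf{d}Y=\mathbf{d}x$, and a final application of \eqref{max1dir} (now genuinely using $\mathbf{d}$-finality) gives $x=\mathbf{d}$-$\max Y$.

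The one genuinely delicate point is the first workhorse identity in the presence of the $[0,\infty]$ conventions ($\infty-\infty=0$ and the positive part), where one must check that interchanging $\sup_y$ with $\sup_w$ and rewriting $-\inf_{y}w\mathbf{d}y$ as $\sup_{y}(-w\mathbf{d}y)$ remains valid; the degenerate cases $w\mathbf{d}x=\infty$ contribute $0$ and so cause no trouble. Everything else is bookkeeping once the two equivalences $x=\mathbf{d}$-$\max Y\Leftrightarrow\mathbf{d}Y=\mathbf{d}x$ and $x=\underline{\mathbf{d}}$-$\sup Y\Leftrightarrow Y\underline{\mathbf{d}}=x\underline{\mathbf{d}}$ are in hand.
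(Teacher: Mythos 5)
Your proposal is correct, and for \eqref{supxx} and \eqref{max1dir} it essentially coincides with the paper's proof (the paper likewise derives the forward implication $Y\leq^\mathbf{d}x\Rightarrow\mathbf{d}x\leq\mathbf{d}Y+Y\mathbf{d}x=\mathbf{d}Y$ without finality, so your remark that finality enters only in the converse is accurate). Your first workhorse, $Y\underline{\mathbf{d}}w=X(\mathbf{d}w-\mathbf{d}Y)_+$, is exactly the computation the paper performs inline in its proof of \eqref{max=>sup}; the difference there is only bookkeeping: the paper works with the one-sided inequality $\mathbf{d}Y\leq\mathbf{d}x$ to get $Y\underline{\mathbf{d}}\geq x\underline{\mathbf{d}}$ and obtains $Y\leq^{\underline{\mathbf{d}}}x$ from $\underline{\mathbf{d}}\leq\mathbf{d}$, while you upgrade to the equality $\mathbf{d}Y=\mathbf{d}x$ and invoke \eqref{supxx} for the hemimetric $\underline{\mathbf{d}}$. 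The genuine divergence is \eqref{Emax}: the paper argues by a direct inequality chase, $x\underline{\mathbf{d}}y\leq Y\underline{\mathbf{d}}y\leq Y\mathbf{d}y=0$, then $\mathbf{d}Y=\mathbf{d}y\leq\mathbf{d}x+x\underline{\mathbf{d}}y=\mathbf{d}x$ and $Y\mathbf{d}x\leq Y(\mathbf{d}Y)+Y\underline{\mathbf{d}}x=0$; you instead apply \eqref{max=>sup} to $y$, use uniqueness of $\underline{\mathbf{d}}$-suprema up to $\underline{\mathbf{d}}$-equivalence to get $x\underline{\mathbf{d}}y=0=y\underline{\mathbf{d}}x$, hence $\mathbf{d}x=\mathbf{d}y=\mathbf{d}Y$, and close with the converse of \eqref{max1dir}. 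Both are sound; your modular route makes visible that all $\mathbf{d}$-maxima and $\underline{\mathbf{d}}$-suprema of a fixed $\mathbf{d}$-final $Y$ share the same lower cone $\mathbf{d}x$ and isolates the use of finality in a single step, whereas the paper's computation is shorter and never needs the full equality $\mathbf{d}x=\mathbf{d}y$. One cosmetic caveat: your claim that the degenerate cases ``contribute $0$'' is not quite right \textendash\, when $w\mathbf{d}z=\infty$ but $w\mathbf{d}Y<\infty$, both sides of the workhorse identity are $\infty$, not $0$ \textendash\, but the identity does hold under the paper's conventions, since $t\mapsto(w\mathbf{d}z-t)_+$ is only discontinuous at $t=\infty$ when $w\mathbf{d}z=\infty$, and then $w\mathbf{d}Y=\infty$ forces every $w\mathbf{d}y=\infty$, so both sides vanish together; the paper uses this step without comment.
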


\begin{proof}\
\begin{itemize}
\item[\eqref{supxx}]  If $Y\mathbf{d}=x\mathbf{d}$ and $x\leq^\mathbf{d}x$ then $Y\mathbf{d}x=x\mathbf{d}x=0$, i.e. $Y\leq^\mathbf{d}x$ so $x=\mathbf{d}$-$\sup Y$.  If $Y\leq^\mathbf{d}x$ and $x\mathbf{d}\leq Y\mathbf{d}$ then $x\mathbf{d}x\leq Y\mathbf{d}x=0$ and, as $\mathbf{d}$ is a distance, $Y\mathbf{d}\leq Y\mathbf{d}x+x\mathbf{d}=x\mathbf{d}$, i.e. $x\leq^\mathbf{d}x$ and $x\mathbf{d}=Y\mathbf{d}$.

\item[\eqref{max=>sup}]  If $\mathbf{d}Y\leq\mathbf{d}x$ then $Y\underline{\mathbf{d}}\geq x\underline{\mathbf{d}}$ as
\begin{align*}
Y\underline{\mathbf{d}}w&=\sup_{y\in Y,z\in X}(z\mathbf{d}w-z\mathbf{d}y)_+\\
&=\sup_{z\in X}(z\mathbf{d}w-\inf_{y\in Y}z\mathbf{d}y)_+\\
&=\sup_{z\in X}(z\mathbf{d}w-z\mathbf{d}Y)_+\\
&\geq\sup_{z\in X}(z\mathbf{d}w-z\mathbf{d}x)_+\\
&=x\underline{\mathbf{d}}w.
\end{align*}
Also $\underline{\mathbf{d}}\leq\mathbf{d}$, as $\mathbf{d}$ is a distance, so $Y\leq^\mathbf{d}x$ implies $Y\leq^{\underline{\mathbf{d}}}x$.

\item[\eqref{max1dir}]  If $\mathbf{d}Y=\mathbf{d}x$ then, as $Y$ is $\mathbf{d}$-final, $0=y\mathbf{d}Y=y\mathbf{d}x$, for all $y\in Y$, i.e. $Y\leq^\mathbf{d}x$ so $x=\mathbf{d}$-$\max Y$.  Conversely, as $\mathbf{d}$ is a distance, $Y\leq^\mathbf{d}x$ implies $\mathbf{d}x\leq\mathbf{d}Y+Y\mathbf{d}x=\mathbf{d}Y$.

\item[\eqref{Emax}]  If $x=\underline{\mathbf{d}}$-$\sup Y$ and $y=\mathbf{d}$-$\max Y$ then $x\underline{\mathbf{d}}y\leq Y\underline{\mathbf{d}}y\leq Y\mathbf{d}y=0$, as $\mathbf{d}$ is a distance.  So $\mathbf{d}Y=\mathbf{d}y\leq\mathbf{d}x+x\underline{\mathbf{d}}y=\mathbf{d}x$.  As $Y$ is $\mathbf{d}$-final and $Y\leq^{\underline{\mathbf{d}}}x$, $Y\mathbf{d}x\leq Y(\mathbf{d}Y)+Y\underline{\mathbf{d}}x=0$, i.e. $Y\leq^\mathbf{d}x$ too so $x=\mathbf{d}$-$\max Y$.\qedhere
\end{itemize}
\end{proof}

For any $\sqsubset\ \subseteq X\times X$, we see that
\begin{align*}
x=\text{$\sqsubset$-$\sup Y\ $}\qquad&\Leftrightarrow\qquad Y\subseteq(\sqsubset x)\quad\text{and}\quad\bigcap_{y\in Y}(y\sqsubset)\subseteq(x\sqsubset).\\
x=\text{$\sqsubset$-$\max Y$}\qquad&\Leftrightarrow\qquad Y\subseteq(\sqsubset x)\quad\text{and}\quad\bigcup_{y\in Y}(\sqsubset y)\supseteq(\sqsubset x).
\end{align*}
Thus if $\preceq$ is a partial order then $\preceq$-suprema and $\preceq$-maxima are suprema and maxima in the usual sense with respect to $\preceq$.  Indeed, if $\sqsubset$ is antisymmetric and $x\sqsubset x=\ \sqsubset$-$\max Y$ then, for some $y\in Y$, we have $x\sqsubset y\sqsubset x$ and hence $x=y$.  Maxima are more interesting for non-reflexive relations, like the way-below relation $\ll$ from domain theory or even just the strict ordering $<$ on $\mathbb{R}$.  Then maxima can be intuitively more like suprema, e.g. for any $Y\subseteq\mathbb{R}$,
\[x=\text{$<$-$\max Y$}\qquad\Leftrightarrow\qquad x=\text{$\leq$-$\sup Y$ and }x\notin Y.\]

We can also relate $\mathbf{d}$-suprema and $\mathbf{d}$-maxima to $\leq^\mathbf{d}$-suprema and $<^\mathbf{d}$-maxima, at least under certain interpolations assumptions.  One of these involves $\mathcal{P}\mathbf{d}\in[0,\infty]^{\mathcal{P}(X)\times X}$ (not to be confused with $\mathbf{d}\mathcal{P}$) defined by
\[Y(\mathcal{P}\mathbf{d})x=Y\mathbf{d}x=\sup_{y\in Y}y\mathbf{d}x.\]
So $Y\leq^{\mathcal{P}\mathbf{d}}x$ means $Y\mathbf{d}x=0$, i.e. $Y\leq^\mathbf{d}x$.

\begin{prp}\label{supmaxrelations}
For any $Y\subseteq X$,
\begin{alignat}{4}
\label{dsup=>sup}x&=\text{$\mathbf{d}$-$\sup Y$}\quad&&\Rightarrow\quad x=\text{$\leq^\mathbf{d}$-$\sup Y$}.&&\\
\label{dsup<=sup}x&=\text{$\mathbf{d}$-$\sup Y$}\quad&&\Leftarrow\quad x=\text{$\leq^\mathbf{d}$-$\sup Y$}&\qquad&\text{if }&\mathbin{\leq^{\mathcal{P}\mathbf{d}}}\circ\underline{\mathbf{d}}&\,\leq\,\mathcal{P}\mathbf{d}.\\
\label{dmax<=max}x&=\text{$\mathbf{d}$-$\max Y$}\quad&&\Leftarrow\quad x=\text{$<^\mathbf{d}$-$\max Y$}&\qquad&\text{if }&\mathbf{\overline{d}}\circ\mathbin{<^\mathbf{d}}&\,\leq\,\mathbf{d}.\\
\intertext{If $\mathbf{d}$ is a distance and $Y$ is $<^\mathbf{d}$-final then}
\label{dmax=>max}x&=\text{$\mathbf{d}$-$\max Y$}\quad&&\Rightarrow\quad x=\text{$<^\mathbf{d}$-$\max Y$}&\qquad&\text{if }&\mathbin{<^{\overline{\mathbf{d}}}}\circ\mathbin{\leq^\mathbf{d}}&\,\supseteq\,\mathbin{<^\mathbf{d}}.
\end{alignat}
\end{prp}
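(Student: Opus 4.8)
The plan is to unfold \eqref{supdef} and \eqref{maxdef} against the relational descriptions of $\sqsubset$-$\sup$ and $\sqsubset$-$\max$ recorded just before the statement, and then feed each of the four interpolation hypotheses into a short triangle-inequality estimate built from the factorizations $\mathbf{d}=\overline{\mathbf{d}}\circ\mathbf{d}=\mathbf{d}\circ\underline{\mathbf{d}}$ and the inequalities $\overline{\mathbf{d}},\underline{\mathbf{d}}\leq\mathbf{d}$ of \autoref{hemiprop}.

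For \eqref{dsup=>sup} I would argue directly: $x=\mathbf{d}$-$\sup Y$ gives $Y\leq^\mathbf{d}x$ (the first half of $x=\leq^\mathbf{d}$-$\sup Y$) together with $Y\mathbf{d}\geq x\mathbf{d}$, and the latter immediately yields $\bigcap_{y\in Y}(y\leq^\mathbf{d})\subseteq(x\leq^\mathbf{d})$ since $Y\mathbf{d}z=0$ forces $x\mathbf{d}z\leq Y\mathbf{d}z=0$. For the converse \eqref{dsup<=sup}, only the inequality $x\mathbf{d}\leq Y\mathbf{d}$ needs proof. Fixing a point $y$ and using $\mathbf{d}=\mathbf{d}\circ\underline{\mathbf{d}}$, every $w$ with $x\leq^\mathbf{d}w$ satisfies $x\mathbf{d}y\leq x\mathbf{d}w+w\underline{\mathbf{d}}y=w\underline{\mathbf{d}}y$; since $x=\leq^\mathbf{d}$-$\sup Y$ forces $\{w:Y\leq^\mathbf{d}w\}\subseteq\{w:x\leq^\mathbf{d}w\}$, I take the infimum over the smaller set and invoke the hypothesis $\leq^{\mathcal{P}\mathbf{d}}\circ\underline{\mathbf{d}}\leq\mathcal{P}\mathbf{d}$ evaluated at $(Y,y)$ to conclude $x\mathbf{d}y\leq\inf_{w:\,Y\leq^\mathbf{d}w}w\underline{\mathbf{d}}y\leq Y\mathbf{d}y$.

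The max statements are dual in spirit. For \eqref{dmax<=max} (with $\mathbf{d}$ a distance), $x=<^\mathbf{d}$-$\max Y$ gives $Y\subseteq(<^\mathbf{d}x)\subseteq(\leq^\mathbf{d}x)$, hence $Y\leq^\mathbf{d}x$, so only $\mathbf{d}Y\leq\mathbf{d}x$ remains. Fixing $z$ and applying the hypothesis $\overline{\mathbf{d}}\circ<^\mathbf{d}\leq\mathbf{d}$ at $(z,x)$ bounds $\inf_{v<^\mathbf{d}x}z\overline{\mathbf{d}}v\leq z\mathbf{d}x$; for each such $v$ the covering half $\bigcup_{y\in Y}(<^\mathbf{d}y)\supseteq(<^\mathbf{d}x)$ supplies $y\in Y$ with $v\leq^\mathbf{d}y$, whence $\mathbf{d}=\overline{\mathbf{d}}\circ\mathbf{d}$ gives $z\mathbf{d}Y\leq z\mathbf{d}y\leq z\overline{\mathbf{d}}v+v\mathbf{d}y=z\overline{\mathbf{d}}v$, and infimising over $v$ delivers $z\mathbf{d}Y\leq z\mathbf{d}x$, i.e.\ $\mathbf{d}Y\leq\mathbf{d}x$.

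The main obstacle is \eqref{dmax=>max}, where $<^\mathbf{d}$-finality of $Y$ does the decisive work. For $Y\subseteq(<^\mathbf{d}x)$, finality gives $y'\in Y$ with $y<^\mathbf{d}y'$ for each $y\in Y$, and since $y'\leq^\mathbf{d}x$ I would first record the auxiliary inclusion $<^\mathbf{d}\circ\leq^\mathbf{d}\subseteq<^\mathbf{d}$ (a one-line consequence of $\underline{\mathbf{d}}\leq\mathbf{d}$ from \eqref{dis} and the triangle inequality for $\underline{\mathbf{d}}$) to get $y<^\mathbf{d}x$. The covering inclusion is the crux. Given $z<^\mathbf{d}x$, the hypothesis $<^{\overline{\mathbf{d}}}\circ\leq^\mathbf{d}\supseteq<^\mathbf{d}$ produces $w$ with $z<^{\overline{\mathbf{d}}}w\leq^\mathbf{d}x$; then $\mathbf{d}Y\leq\mathbf{d}x$ forces $\inf_{y\in Y}w\mathbf{d}y=0$, so I can pick $y\in Y$ with $w\mathbf{d}y$, and hence $w\overline{\mathbf{d}}y$, below the threshold $\epsilon$ witnessing $z<^{\overline{\mathbf{d}}}w$, giving $z\leq^{\overline{\mathbf{d}}}y$ and therefore $z\mathbf{d}\leq y\mathbf{d}$. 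The temptation is then to claim $z<^\mathbf{d}y$ directly, but this fails: the naive estimate only makes $z\mathbf{d}v$ small, not zero, on a $\underline{\mathbf{d}}$-ball about $y$. The fix is to apply finality once more to replace $y$ by $y'\in Y$ with $y<^\mathbf{d}y'$; then the ball $(y'<^{\underline{\mathbf{d}}}_{\epsilon'})$ witnessing $y<^\mathbf{d}y'$ lies inside $(y\leq^\mathbf{d})\subseteq(z\leq^\mathbf{d})$, which is exactly $z<^\mathbf{d}y'$ with $y'\in Y$. I expect packaging this ``close in $\overline{\mathbf{d}}$, then push forward by finality'' maneuver cleanly — rather than any single estimate — to be the real difficulty.
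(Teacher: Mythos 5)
Your proposal is correct and follows essentially the same route as the paper: the same unfolding of \eqref{supdef} and \eqref{maxdef} against the relational descriptions of $\sqsubset$-suprema and $\sqsubset$-maxima, the same use of $\mathbf{d}=\mathbf{d}\circ\underline{\mathbf{d}}=\overline{\mathbf{d}}\circ\mathbf{d}$ together with each interpolation hypothesis, and in \eqref{dmax=>max} the same ``get $z\leq^{\overline{\mathbf{d}}}y$, then push forward by $<^\mathbf{d}$-finality'' maneuver. The only differences are cosmetic: you argue \eqref{dsup<=sup} and \eqref{dmax<=max} directly where the paper argues by contradiction, and in \eqref{dmax=>max} your single application of finality (via $(y'<^{\underline{\mathbf{d}}}_{\epsilon'})\subseteq(y\leq^\mathbf{d})\subseteq(z\leq^\mathbf{d})$) slightly streamlines the paper's chain $y\leq^\mathbf{d}y'<^\mathbf{d}y''$, which invokes finality twice.
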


\begin{proof}\
\begin{itemize}
\item[\eqref{dsup=>sup}]  Multiplying $x\mathbf{d}\leq Y\mathbf{d}$ by $\infty$ yields $(x\leq^\mathbf{d})\supseteq(Y\leq^\mathbf{d})$.

\item[\eqref{dsup<=sup}]  Assume $x=\mathbin{\leq^\mathbf{d}}$-$\sup Y\neq\mathbf{d}$-$\sup Y$ so $Y\mathbf{d}z<x\mathbf{d}z$, for some $z\in X$.  As $(\mathbin{\leq^{\mathcal{P}\mathbf{d}}}\circ\underline{\mathbf{d}})\leq\mathcal{P}\mathbf{d}$, we have $w\in X$ such that $w\underline{\mathbf{d}}z<x\mathbf{d}z$ and $Y\leq^\mathbf{d}w$ and hence $x\leq^\mathbf{d}w$.  Then $x\mathbf{d}z\leq x\mathbf{d}w+w\mathbf{\underline{d}}z<x\mathbf{d}z$, a contradiction.

\item[\eqref{dmax<=max}]  Assume $x=\mathbin{<^\mathbf{d}}$-$\max Y\neq\mathbf{d}$-$\max Y$ so $z\mathbf{d}x<z\mathbf{d}Y$, for some $z\in X$.  As $(\mathbf{\overline{d}}\circ\mathbin{<^\mathbf{d}})\leq\mathbf{d}$, we have $w<^\mathbf{d}x$ with $z\mathbf{\overline{d}}w<z\mathbf{d}Y$.  This means that $w\mathbf{d}Y\geq z\mathbf{d}Y-z\mathbf{\overline{d}}w>0$ so, for all $y\in Y$, $w\nleq^\mathbf{d}y$ and hence $w\not<^\mathbf{d}y$, contradicting $x=\mathbin{<^\mathbf{d}}$-$\max Y$.

\item[\eqref{dmax=>max}]
Assume $x=\mathbf{d}$-$\max Y$.  As $Y$ is $<^\mathbf{d}$-final, for any $y\in Y$, we have $z\in Y$ with $y<^\mathbf{d}z\leq^\mathbf{d}x$ and hence $y<^\mathbf{d}x$, by \eqref{<<=}, i.e. $Y<^\mathbf{d}x$.  Now take $z\in X$ with $z<^\mathbf{d}x$.  We need to show that $z<^\mathbf{d}y$, for some $y\in Y$.  As $\mathbin{<^{\overline{\mathbf{d}}}}\circ\mathbin{\leq^\mathbf{d}}\,\supseteq\,\mathbin{<^\mathbf{d}}$, we can take $w\in X$ with $z<^{\overline{\mathbf{d}}}w\leq^\mathbf{d}x$, so $(w<^\mathbf{\overline{d}}_\epsilon)\subseteq(z\leq^{\overline{\mathbf{d}}})$, for some $\epsilon>0$.  As $w\leq^\mathbf{d}x=\mathbf{d}$-$\max Y$, we have $y\in Y$ such that $w\mathbf{\overline{d}}y\leq w\mathbf{d}y<\epsilon$ and hence $z\leq^{\overline{\mathbf{d}}}y$.  As $Y$ is $<^\mathbf{d}$-final, we have $y'\in Y$ with $y<^\mathbf{d}y'$ so $z<^\mathbf{d}y'$, by \autoref{<d}. \qedhere
\end{itemize}
\end{proof}

\section{\bf Completeness}\label{Completeness}

Next we consider generalizations of metric and directed completeness.

\begin{dfn}
For any topology $\mathcal{T}$ on $X$ and relation $\mathcal{R}\subseteq X\times\mathcal{P}(X)$,
\begin{align*}
X\text{ is \emph{$\mathbf{d}$-$\mathcal{T}\!$-complete}}\qquad&\Leftrightarrow\qquad\forall\text{$\mathbf{d}$-Cauchy }(x_\lambda)\subseteq X\ \exists x\in X(x_\lambda\xrightarrow{\mathcal{T}}x).\\
X\emph{ is $\mathbf{d}$-$\mathcal{R}$-complete}\qquad&\Leftrightarrow\qquad\forall\,\text{$\mathbf{d}$-directed }Y\subseteq X\ \exists x\mathcal{R}Y.
\end{align*}
\end{dfn}

When $\mathbf{d}$ is clear, we simply refer to $\mathcal{T}$-completeness and $\mathcal{R}$-completeness.  The cases of primary interest are $\mathcal{T}=\mathbf{d}^\circ_\circ$, $\mathbf{d}^\bullet_\circ$ and $\mathcal{R}=\mathbf{d}$-$\sup$, $\mathbf{d}$-$\max$.

When $\mathbf{d}$ is a distance and $\mathcal{T}=\mathbf{d}^\bullet_\circ$ or $\mathbf{d}^\circ_\circ$, we can replace $\mathbf{d}$-Cauchy with $\mathbf{d}$-pre-Cauchy, by \autoref{Clim} \eqref{preCauchysub} and \autoref{dlimits}.  In the hemimetric case, these are usually called Smyth and Yoneda completeness \textendash\, see \cite[Definitions 7.2.1 and 7.4.1]{Goubault2013} \textendash\, as \autoref{convchar} and \eqref{arrowc} then show that $\mathbf{d}_\bullet$-limits and $\mathbf{d}_\circ$-limits of $\mathbf{d}$-Cauchy $(x_\lambda)$ coincide.
\vspace{-10pt}
\begin{figure}[H]
\caption{Hemimetric Case}
\vspace{-10pt}
\begin{align*}
\text{Smyth complete}\quad&\Leftrightarrow\quad\mathbf{d}^\bullet_\bullet\text{-complete}\quad\Leftrightarrow\quad\mathbf{d}^\bullet_\circ\text{-complete}\\
\Rightarrow\quad\text{Yoneda complete}\quad&\Leftrightarrow\quad\mathbf{d}^\circ_\bullet\text{-complete}\quad\Leftrightarrow\quad\mathbf{d}^\circ_\circ\text{-complete}.
\end{align*}
\end{figure}
\vspace{-10pt}
\noindent
If $\mathbf{d}$ is a metric then these are all equivalent to the usual notion of metric completeness \textendash\, see \cite[Lemma 7.4.3]{Goubault2013}.

On the other hand, for any poset $(X,\sqsubseteq)$
\[\text{directed complete}\quad\Leftrightarrow\quad\text{$\sqsubseteq$-$\sup$-complete}\quad\Leftrightarrow\quad\sqsubseteq^\circ_\circ\!\text{-complete}\]
(where $\sqsubseteq^\circ_\circ$ is topology generated by $\sqsubseteq$-holes $(x\not\sqsubseteq)$ and $(\not\sqsubseteq x)$).
If $\sqsubseteq$ is the lower preorder of some transitive $\sqsubset$ on $X$ then, moreover,
\[\text{directed complete}\quad\Leftarrow\quad\text{$\sqsubset$-$\max$-complete}\quad\Leftrightarrow\quad\sqsubset^\bullet_\circ\!\text{-complete}.\]
(where $\sqsubset^\bullet_\circ$ is topology generated by upper $\sqsubset$-balls $(x\sqsubset)$ and lower $\sqsubset$-holes $(x\not\sqsubset)$).  However if $\mathbf{d}$ is a metric, every $\mathbf{d}$-directed subset contains at most $1$ element, making $X$ trivially $\mathbf{d}$-$\sup$-complete and $\mathbf{d}$-$\max$-complete.  So unlike the topological notions of completeness, the relational notions do not generalize metric completeness.  Indeed, the topological notions are stronger (even for non-distance $\mathbf{d}$), as we now show.

\begin{prp}\label{Directed=>Cauchy}
\begin{align}
\label{supcomp}X\text{ is $\mathbf{d}^\circ_\circ$-complete}\quad&\Rightarrow\quad X\text{ is $\mathbf{d}$-$\sup$-complete}.\\
\label{maxcomp}X\text{ is $\mathbf{d}^\bullet_\circ$-complete}\quad&\Rightarrow\quad X\text{ is $\mathbf{d}$-$\max$-complete}.
\end{align}
\end{prp}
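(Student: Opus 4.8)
The plan is to exploit the metric/order duality of Figure~\ref{Dualities}: convert a $\mathbf{d}$-directed subset into a $\mathbf{d}$-Cauchy net, feed that net to the relevant completeness hypothesis to obtain a topological limit $x$, and then verify that $x$ is the desired $\mathbf{d}$-supremum (resp. $\mathbf{d}$-maximum). Since the statement is asserted for an arbitrary $\mathbf{d}$, not merely a distance, I would take care to produce the net \emph{inside} $Y$, so that the triangle inequality is never invoked. Both \eqref{supcomp} and \eqref{maxcomp} then share one scaffolding, differing only in which of the two convergences making up $\mathbf{d}^\circ_\circ$ and $\mathbf{d}^\bullet_\circ$ does the work.

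First I would fix a $\mathbf{d}$-directed $Y\subseteq X$ and take the approximating net $(x_\lambda)$ from \eqref{x<Yddir}; its construction actually places it in $Y$, and it satisfies $(x_\lambda)\leq^\mathbf{d}Y\leq^\mathbf{d}(x_\lambda)$. By \eqref{x<YdpC} it is $\mathbf{d}$-pre-Cauchy, so \autoref{Clim} \eqref{preCauchysub} yields a $\mathbf{d}$-Cauchy subnet $(y_\gamma)$, which still lies in $Y$ and, as a subnet, still satisfies $Y\leq^\mathbf{d}(y_\gamma)$. Now the completeness hypothesis applies: in case \eqref{supcomp}, $\mathbf{d}^\circ_\circ$-completeness produces $x$ with $y_\gamma\carrowc x$; in case \eqref{maxcomp}, $\mathbf{d}^\bullet_\circ$-completeness produces $x$ with $y_\gamma\barrowc x$.

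It then remains to check \eqref{supdef} and \eqref{maxdef}. The shared half is $Y\leq^\mathbf{d}x$: both limits contain the lower-hole convergence $y_\gamma\arrowc x$, so combined with $Y\leq^\mathbf{d}(y_\gamma)$ this is precisely \eqref{x<Yd}. The second half is where membership $y_\gamma\in Y$ pays off. For \eqref{supcomp}, the upper-hole part $y_\gamma\carrow x$ gives $x\mathbf{d}c\leq\liminf_\gamma y_\gamma\mathbf{d}c$ for every $c\in X$, while $y_\gamma\in Y$ gives $y_\gamma\mathbf{d}c\leq Y\mathbf{d}c$, whence $x\mathbf{d}\leq Y\mathbf{d}$ and so $x=\mathbf{d}$-$\sup Y$. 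For \eqref{maxcomp}, the upper-ball part $y_\gamma\barrow x$ gives $\limsup_\gamma c\mathbf{d}y_\gamma\leq c\mathbf{d}x$, while $y_\gamma\in Y$ gives $\mathbf{d}Y\leq\mathbf{d}y_\gamma$ pointwise, whence $\mathbf{d}Y\leq\mathbf{d}x$ and so $x=\mathbf{d}$-$\max Y$.

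The one genuinely load-bearing idea is keeping the net within $Y$. Starting instead from an arbitrary net with $(x_\lambda)\leq^\mathbf{d}Y\leq^\mathbf{d}(x_\lambda)$, the minimality half would seem to demand routing a comparison through $Y$ by the triangle inequality, which we are not allowed; restricting to $(y_\gamma)\subseteq Y$ makes it fall straight out of the monotonicity of $\sup$ and $\inf$. The only remaining points, both routine, are that the witness of \eqref{x<Yddir} can indeed be chosen in $Y$, and that passing to a subnet preserves both membership in $Y$ and the one-sided relation $Y\leq^\mathbf{d}(y_\gamma)$ feeding \eqref{x<Yd}.
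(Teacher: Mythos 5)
Your proposal is correct and is essentially the paper's own proof: the paper likewise takes the approximating net produced \emph{inside} $Y$ by \eqref{x<Yddir} (its phrase $Y\leq^\mathbf{d}(x_\lambda)\subseteq Y$ records exactly the membership you call load-bearing), passes to a $\mathbf{d}$-Cauchy subnet via \eqref{x<YdpC} and \autoref{Clim} \eqref{preCauchysub}, gets $Y\leq^\mathbf{d}x$ from \eqref{x<Yd}, and concludes with $x\mathbf{d}\leq(x_\lambda)\mathbf{d}\leq Y\mathbf{d}$ for \eqref{supcomp}, respectively $\mathbf{d}x\geq\mathbf{d}(x_\lambda)\geq\mathbf{d}Y$ for \eqref{maxcomp}, just as you do. Your closing observations (that the witness of \eqref{x<Yddir} lies in $Y$ and that subnets preserve both membership and $Y\leq^\mathbf{d}(y_\gamma)$) are accurate and match the paper's implicit use of the same facts.
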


\begin{proof}
Take $\mathbf{d}$-directed $Y\subseteq X$, so we have $\mathbf{d}$-Cauchy $(x_\lambda)\equiv^\mathbf{d}Y$, by \eqref{Yddir}.  Note we can take $(x_\lambda)\subseteq Y$ by taking $Y$ as the ambient space $X$ in \eqref{Yddir}.
\begin{itemize}
\item[\eqref{supcomp}]  If $X$ is $\mathbf{d}^\circ_\circ$-complete, we have $x\in X$ with $x_\lambda\carrowc x$.  As $x_\lambda\arrowc x$, \eqref{x<Yd} yields $Y\leq^\mathbf{d}x$.  As $x_\lambda\carrow x$, \eqref{cadef} and $(x_\lambda)\subseteq Y$ yield
\[x\mathbf{d}\leq\liminf_\lambda x_\lambda\mathbf{d}\leq\sup_{y\in Y}y\mathbf{d}=Y\mathbf{d}\]

\item[\eqref{maxcomp}]  If $X$ is $\mathbf{d}^\bullet_\circ$-complete, we have $x\in X$ with $x_\lambda\barrowc x$.  As $x_\lambda\arrowc x$, \eqref{x<Yd} yields $Y\leq^\mathbf{d}x$.  As $x_\lambda\barrow x$, \eqref{badef} and $(x_\lambda)\subseteq Y$ yield
\[\mathbf{d}x\geq\limsup_\lambda\mathbf{d}x_\lambda\geq\inf_{y\in Y}\mathbf{d}y=\mathbf{d}Y.\qedhere\]
\end{itemize}
\end{proof}

Conversely, we can derive the topological from the relational notions under various interpolation conditions (whose naturality/applicability will be indicated by some closely related conditions as well as examples like $C_0(X)_+$).  This was done for $\mathbf{d}^\circ_\circ$ and $\mathbf{d}$-$\sup$ in \cite{Bice2017} and here we aim to do the same for $\mathbf{d}^\bullet_\circ$ and $\mathbf{d}$-$\max$.  First we use these conditions to turn $\mathbf{d}$-pre-Cauchy nets into equivalent subsets and sequences, collecting their corollaries for completeness at the end.

Unlike much of the rest of the paper, these results have no real analogs in either metric or order theory.  Indeed, if $\mathbf{d}$ is a transitive relation $\sqsubset$ then $\sqsubset^\bullet_\circ$-completeness and $\sqsubset$-$\max$-completeness are automatically equivalent.  In this case, any $\sqsubset$-pre-Cauchy net can be turned into an equivalent $\sqsubset$-directed subset by using \autoref{Clim} \eqref{preCauchysub} to obtain a $\sqsubset$-increasing subnet (which becomes a $\sqsubset$-directed subset when we forget the indexing set).  On the other hand, as mentioned above, $\mathbf{d}$-$\max$-completeness holds trivially for any metric $\mathbf{d}$ and will thus be no help at all in verifying $\mathbf{d}^\bullet_\circ$-completeness, i.e. metric completeness.  Consequently, the results below will become either trivial or inapplicable in these classical cases.


Our first result is a converse of \eqref{x<YdpC} based on \cite[Theorem 1]{Bice2017}.  It relies on the interpolation condition $\underline{\mathbf{d}}\circ\mathbin{\leq^{\mathbf{d}\mathcal{P}}}\,\precapprox\,\mathbf{d}\mathcal{P}$ which, in the hemimetric case, weakens the middle condition considered in \autoref{minimaProp}.  This condition applies to spaces of formal balls, as we discuss in our future work, and the space $C_0(X)_+$, where again $f\mathbf{d}g=\sup_{x\in X}(f(x)-g(x))_+$.  Indeed, \eqref{minima} applies to $C_0(X)_+$, by the comments after \autoref{<dunder}, so $\underline{\mathbf{d}}\circ\mathbin{\leq^{\mathbf{d}\mathcal{P}}}\precapprox\mathbf{d}\mathcal{P}$ also applies, by \autoref{minimaProp}.  However, note that $\underline{\mathbf{d}}\circ\mathbin{\leq^{\mathbf{d}\mathcal{P}}}\,\precapprox\,\mathbf{d}\mathcal{P}$ does not apply to any metric space with at least two points.

\begin{thm}\label{Cauchytodirected}
If $\mathbf{d}$ is a distance and $\underline{\mathbf{d}}\circ\mathbin{\leq^{\mathbf{d}\mathcal{P}}}\,\precapprox\,\mathbf{d}\mathcal{P}$ then
\[(x_\lambda)\text{ is $\mathbf{d}$-pre-Cauchy}\qquad\Leftrightarrow\qquad\exists\text{ $<^\mathbf{d}$-directed }Y\equiv^\mathbf{d}(x_\lambda).\]
\end{thm}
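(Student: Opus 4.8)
The plan is to dispose of $\Leftarrow$ in one line and to spend all the effort constructing the interpolating set $Y$ for $\Rightarrow$. For $\Leftarrow$, if such a $Y$ exists then by hypothesis $(x_\lambda)\leq^\mathbf{d}Y\leq^\mathbf{d}(x_\lambda)$, which is exactly the premise of \autoref{directedCauchy}\eqref{x<YdpC}; this already gives that $(x_\lambda)$ is $\mathbf{d}$-pre-Cauchy (only the triangle inequality for $\mathbf{d}$ is used, and the $<^\mathbf{d}$-directedness of $Y$ plays no role).

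For $\Rightarrow$ I would first rewrite the hypothesis in the uniform form displayed just before the statement: for each $\delta>0$ there is $\epsilon>0$ so that every $x\in X$ admits an interpolant $y$ with (a) $y\leq^\mathbf{d}z$ whenever $x\mathbf{d}z<\epsilon$, and (b) $x\underline{\mathbf{d}}y<\delta$. Applying this to $\delta_n=1/n$ yields radii $\epsilon_n>0$ and a choice $y_{n,x}$ of interpolants for every $x\in X$. Using $\mathbf{d}$-pre-Cauchiness, for each $n$ I pick $\alpha_n$ with $\limsup_\delta x_\gamma\mathbf{d}x_\delta<\epsilon_n/2$ for all $\gamma\succ\alpha_n$ (the factor of two is deliberate slack), and then set
\[Y=\{\,y_{n,x_\gamma}:n\in\mathbb{N},\ \gamma\succ\alpha_n\,\}.\]
The two sandwich inclusions then fall out of the two interpolant properties separately. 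For $Y\leq^\mathbf{d}(x_\lambda)$, fix $y=y_{n,x_\gamma}\in Y$; since $\gamma\succ\alpha_n$ there is a tail of indices $\delta$ with $x_\gamma\mathbf{d}x_\delta<\epsilon_n$, whence (a) gives $y\mathbf{d}x_\delta=0$, so $y\mathbf{d}x_\lambda\rightarrow0$. For $(x_\lambda)\leq^\mathbf{d}Y$, fix $\eta>0$, take $n$ with $\delta_n<\eta/2$, and for large $\lambda$ use pre-Cauchiness to choose a \emph{future} index $\gamma\succ\lambda$ (hence $\gamma\succ\alpha_n$) with $x_\lambda\mathbf{d}x_\gamma<\eta/2$; reading (b) at $x_\gamma$ off against $z=x_\lambda$ gives $x_\lambda\mathbf{d}y_{n,x_\gamma}\leq x_\lambda\mathbf{d}x_\gamma+\delta_n<\eta$. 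Taking the interpolant at a future index is exactly what keeps the Cauchy estimate in the controlled earlier-to-later direction.

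The main obstacle is $<^\mathbf{d}$-directedness of $Y$, where both interpolant properties and the definition $a<^\mathbf{d}b\Leftrightarrow\exists\epsilon'>0\ (b<^{\underline{\mathbf{d}}}_{\epsilon'})\subseteq(a\leq^\mathbf{d})$ must be combined. Given finite $F=\{y_{n_i,x_{\gamma_i}}\}_{i\leq k}\subseteq Y$, I would produce one upper interpolant $b=y_{m,x_\mu}$ as follows: set $\epsilon^*=\min_i\epsilon_{n_i}$, choose $m$ with $\delta_m<\epsilon^*/4$, and use pre-Cauchiness to find $\mu\succ\alpha_m$ with $\mu\succ\gamma_i$ and $x_{\gamma_i}\mathbf{d}x_\mu<\epsilon_{n_i}/2$ for all $i$ at once. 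It then suffices to verify that $b\underline{\mathbf{d}}z<\delta_m$ forces $y_{n_i,x_{\gamma_i}}\leq^\mathbf{d}z$, taking $\epsilon'=\delta_m$: reading $b\underline{\mathbf{d}}z<\delta_m$ at $w=x_{\gamma_i}$ gives $x_{\gamma_i}\mathbf{d}z\leq x_{\gamma_i}\mathbf{d}b+\delta_m$, while (b) of $b$ read off at $z=x_{\gamma_i}$ gives $x_{\gamma_i}\mathbf{d}b\leq x_{\gamma_i}\mathbf{d}x_\mu+\delta_m$, so $x_{\gamma_i}\mathbf{d}z<\epsilon_{n_i}/2+2\delta_m<\epsilon_{n_i}$, and (a) of $y_{n_i,x_{\gamma_i}}$ finishes it. Hence $F<^\mathbf{d}b$ with $b\in Y$.

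The points I would be most careful about are bookkeeping: keeping the two radii of an interpolant separated (in the directedness step only $\delta_m$, not $\epsilon_m$, of the upper interpolant $b$ is used, while only property (a) of each $y_{n_i,x_{\gamma_i}}$ is used), building the factor-of-two slack into the choice of $\alpha_n$ so the several triangle estimates close, and throughout exploiting that $\mathbf{d}$-pre-Cauchiness controls $x_\gamma\mathbf{d}x_\delta$ only in the earlier-to-later direction. The construction parallels \parencite[Theorem 1]{Bice2017}, with $\underline{\mathbf{d}}$ and $<^\mathbf{d}$ replacing the hemimetric and way-below relation there, and \autoref{Clim}\eqref{preCauchysub} available should one prefer to pass first to a $\mathbf{d}$-Cauchy subnet.
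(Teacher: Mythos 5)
Your proposal is correct, and the core mechanism is the same as the paper's: uniformize the hypothesis $\underline{\mathbf{d}}\circ\mathbin{\leq^{\mathbf{d}\mathcal{P}}}\precapprox\mathbf{d}\mathcal{P}$ into interpolants below tail-balls of the net, dispose of $\Leftarrow$ via \autoref{directedCauchy}\eqref{x<YdpC}, and verify $<^\mathbf{d}$ through exactly the two-triangle ball inclusion $(b<^{\underline{\mathbf{d}}}_{\epsilon'})\subseteq(x^\bullet_{\epsilon})\subseteq(a\leq^\mathbf{d})$ that the paper uses. Where you genuinely diverge is in the bookkeeping, and your version is arguably cleaner: the paper first passes to a $\mathbf{d}$-Cauchy subnet via \autoref{Clim}\eqref{preCauchysub} and then runs a recursion over $\mathcal{F}(\Lambda)$ with a single interlocking scale $r_n\downarrow0$ satisfying $\tfrac{\underline{\mathbf{d}}\circ\mathbin{\leq^{\mathbf{d}\mathcal{P}}}}{\mathbf{d}\mathcal{P}}(2r_{n+1})<r_n$, which makes the interpolants $y_F$ automatically satisfy $y_F<^\mathbf{d}y_G$ for $F\subsetneqq G$, so that $Y$ comes out as the range of a $<^\mathbf{d}$-monotone net and directedness is free; you instead work directly with the $\mathbf{d}$-pre-Cauchy net, index interpolants by the two parameters $(n,\gamma)$ with $\gamma\succ\alpha_n$, and pay for the flat indexing by verifying directedness with one fresh interpolation at a finer level $m$ (with $\delta_m<\epsilon^*/4$) and a later index $\mu$ — a verification your estimates do close correctly, including the point that only $\delta_m$ of the upper interpolant and only property (a) of the lower ones are used. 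What each buys: your route shows the subnet passage and the recursive scale are dispensable for this theorem (the $\limsup$ tails of pre-Cauchiness suffice throughout), making the argument more self-contained; the paper's recursion buys a chain structure on $Y$ and a template that is reused almost verbatim in \autoref{CDS}, where the finer control of the nested scales $r^m_n$ really is needed.
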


\begin{proof}
As $\lim\limits_{r\rightarrow0}\frac{\underline{\mathbf{d}}\circ\mathbin{\leq^{\mathbf{d}\mathcal{P}}}}{\mathbf{d}\mathcal{P}}(r)=0$, we can define $r_n\downarrow0$, i.e. a strictly decreasing sequence $(r_n)$ with $r_n\rightarrow0$, such that
\[\tfrac{\underline{\mathbf{d}}\circ\mathbin{\leq^{\mathbf{d}\mathcal{P}}}}{\mathbf{d}\mathcal{P}}(2r_{n+1})<r_n.\]

Take $\mathbf{d}$-pre-Cauchy $(x_\lambda)\subseteq X$.  If necessary, we can replace $(x_\lambda)$ with a $\mathbf{d}$-Cauchy subnet, by \autoref{Clim} \eqref{preCauchysub}, and the conclusion of the theorem will be preserved, by \autoref{subnet<Y} (noting that, as $\mathbf{d}$ is a distance, any $\mathbf{d}$-pre-Cauchy net is both $\overline{\mathbf{d}}$-pre-Cauchy and $\underline{\mathbf{d}}$-pre-Cauchy, by \eqref{dis}).  Define $f:\mathcal{F}(\Lambda)\setminus\{\emptyset\}\rightarrow\Lambda$ (where $\mathcal{F}(\Lambda)$ denotes the finite subsets of $\Lambda$) recursively as follows.  Let $f(\{\lambda\})=\lambda$ and, given $F\in\mathcal{F}(\Lambda)$ with $|F|>1$, take $f(F)\succ f(E)$, for all $E\subsetneqq F$, such that
\[\sup_{f(F)\prec\lambda}x_{f(F)}\mathbf{d}x_\lambda<r_{|F|}.\]

Now $\begin{aligned}[t]
x_{f(F)}(\underline{\mathbf{d}}\circ\mathbin{\leq^{\mathbf{d}\mathcal{P}}})(x_{f(F)})^\bullet_{2r_{|F|}}&\leq\tfrac{\underline{\mathbf{d}}\circ\mathbin{\leq^{\mathbf{d}\mathcal{P}}}}{\mathbf{d}\mathcal{P}}(x_{f(F)}\mathbf{d}\mathcal{P}(x_{f(F)})^\bullet_{2r_{|F|}})\\
&\leq\tfrac{\underline{\mathbf{d}}\circ\mathbin{\leq^{\mathbf{d}\mathcal{P}}}}{\mathbf{d}\mathcal{P}}(2r_{|F|})<r_{|F|-1}.
\end{aligned}$\\
Thus we have $y_F\leq^{\mathbf{d}\mathcal{P}}(x_{f(F)})^\bullet_{2r_{|F|}}$ satisfying $x_{f(F)}\underline{\mathbf{d}}y_F<r_{|F|-1}$.  We claim that the net $(y_F)$ obtained in this way is $<^\mathbf{d}$-increasing.  Indeed, if $F\subsetneqq G$ then we can take positive $\epsilon<r_{|G|-1}-x_{f(G)}\underline{\mathbf{d}}y_G$.  If $y\in X$ satisfies $y_G\underline{\mathbf{d}}y<\epsilon$ then
\[x_{f(F)}\mathbf{d}y\leq x_{f(F)}\mathbf{d}x_{f(G)}+x_{f(G)}\underline{\mathbf{d}}y_G+y_G\underline{\mathbf{d}}y<r_{|F|}+r_{|G|-1}-\epsilon+\epsilon\leq2r_{|F|}.\]
So $(y_G<^{\underline{\mathbf{d}}}_\epsilon)\subseteq(x_{f(F)})^\bullet_{2r_{|F|}}\subseteq(y_F\leq^\mathbf{d})$, i.e. $y_F<^\mathbf{d}y_G$, proving the claim.  Thus $Y=\{y_F:F\in\mathcal{F}(\Lambda)\}$ is $<^\mathbf{d}$-directed.  Also $F\subsetneqq G$ implies
\[x_{f(F)}\mathbf{d}y_G\leq x_{f(F)}\mathbf{d}x_{f(G)}+x_{f(G)}\underline{\mathbf{d}}y_G<2r_{|F|}\rightarrow0\]
so $(x_\lambda)\leq^\mathbf{d}Y$.  And for $\lambda\succ f(F)$, $x_\lambda\in(x_{f(F)})^\bullet_{r_{|F|}}\subseteq(x_{f(F)})^\bullet_{2r_{|F|}}$ so $y_F\leq^\mathbf{d}x_\lambda$ and hence $Y\leq^\mathbf{d}(x_\lambda)$.
\end{proof}

Next we consider a different condition on balls leading to an interpolation condition involving the function $\mathcal{F}\mathbf{d}$ defined on $\mathcal{F}(X)\times X$ by
\[F(\mathcal{F}\mathbf{d})y=\sup_{x\in F}x\mathbf{d}y.\]
So $\mathcal{F}\mathbf{d}$ is just the restriction to finite subsets of $\mathcal{P}\mathbf{d}$ from \eqref{dsup<=sup}.

\begin{prp}
Every open lower $\mathbf{d}$-ball is $\leq^{\mathbf{d}}$-directed if and only if
\[\mathbin{\leq^{\mathcal{F}\mathbf{d}}}\circ\mathbf{d}\ \leq\ \mathcal{F}\mathbf{d}.\]
\end{prp}

\begin{proof}
Assume every ball $x_\bullet^r$ is $\leq^\mathbf{d}$-directed.  Then, for all finite $F\subseteq X$, $x\in X$ and $r>F\mathbf{d}x$, i.e. $F\subseteq x_\bullet^r$, we have $y\in x_\bullet^r$ with $F\leq^\mathbf{d}y$ so $\mathbin{\leq^{\mathcal{F}\mathbf{d}}}\circ\mathbf{d}\leq\mathcal{F}\mathbf{d}$.  Conversely, if $\mathbin{\leq^{\mathcal{F}\mathbf{d}}}\circ\mathbf{d}\leq\mathcal{F}\mathbf{d}$ and we have finite $F\subseteq x_\bullet^r$ then $F\mathbf{d}x<r$ so we have $y$ with $F\leq^\mathbf{d}y$ and $y\mathbf{d}x<r$, i.e. $y\in x_\bullet^r$.
\end{proof}

Again $C_0(X)_+$ with $f\mathbf{d}g=\sup_{x\in X}(f(x)-g(x))_+$ satisfies this condition, while any metric space with at least two points does not.  We weaken the condition slightly and add a completeness assumption in the following, based on \cite[Theorem 2]{Bice2017}.  Note that the result is immediate when $\mathbf{d}$ is a transitive relation $<$, as $<$-$\max$-completeness then implies that we can set each $y_n$ to be the $<$-maximum of any $<$-increasing subnet of $(x_\lambda)$.

\begin{thm}\label{CDS}
If $\mathbf{d}$ is a distance, $\mathbin{\leq^{\mathcal{F}\mathbf{d}}}\circ\mathbf{\overline{d}}\,\leq\,\mathcal{F}\mathbf{d}$ and, moreover, $X$ is $\leq^\mathbf{d}$-$(\mathbf{d}$-$\max)$-complete then
\[(x_\lambda)\text{ is $\mathbf{d}$-pre-Cauchy}\qquad\Rightarrow\qquad\exists\text{ $\underline{\mathbf{d}}^\vee\!$-Cauchy $(y_n)$ with }\mathbf{d}(x_\lambda)=\mathbf{d}(y_n).\]
\end{thm}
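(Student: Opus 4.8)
The plan is to replace the net by its tails, convert each tail into a $\le^\mathbf{d}$-directed subset with the same lower $\mathbf{d}$-profile using the interpolation hypothesis, take $\mathbf{d}$-maxima via completeness, and read off a $\underline{\mathbf{d}}^\vee$-Cauchy sequence. First I would record what Cauchyness alone gives. Since $\underline{\mathbf{d}}\le\mathbf{d}$, a $\mathbf{d}$-Cauchy net is $\underline{\mathbf{d}}$-pre-Cauchy, so by \autoref{Clim} \eqref{dover} the lower functions $\mathbf{d}x_\lambda$ converge pointwise to $L:=\mathbf{d}(x_\lambda)$. Choosing indices $\alpha_1\prec\alpha_2\prec\cdots$ with $x_\gamma\mathbf{d}x_\delta<2^{-n}$ whenever $\alpha_n\prec\gamma\prec\delta$, and writing $T_n=\{x_\mu:\mu\succ\alpha_n\}$, a short triangle-inequality computation shows $L-2^{-n}\le\mathbf{d}T_n\le L$ pointwise, so $\mathbf{d}T_n\to L$ uniformly at rate $2^{-n}$. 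This uniform convergence is the engine behind the conclusion, and it uses only $\mathbf{d}$-Cauchyness, not the two standing hypotheses.

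The hypotheses are needed only to realise each $\mathbf{d}T_n$ as $\mathbf{d}y_n$ for an actual point $y_n$. For this I would, for each $n$, build a $\le^\mathbf{d}$-directed $Y_n$ that is mutually $\le^\mathbf{d}$-cofinal with $T_n$ up to the scale $2^{-n}$. The interpolation hypothesis $\leq^{\mathcal{F}\mathbf{d}}\circ\overline{\mathbf{d}}\le\mathcal{F}\mathbf{d}$ supplies, for each finite $F\subseteq T_n$ and each late net point $x_\mu$, an upper bound $z\ge^\mathbf{d}F$ with $z\overline{\mathbf{d}}x_\mu$ small. Two facts make such $z$ the right building blocks. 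On one hand $z\ge^\mathbf{d}F$ gives $x\le^\mathbf{d}z$ for $x\in F$, so $T_n\le^\mathbf{d}Y_n$. On the other hand, since $\mathbf{d}=\overline{\mathbf{d}}\circ\mathbf{d}$ by \autoref{hemiprop}, we get $z\mathbf{d}x_\nu\le z\overline{\mathbf{d}}x_\mu+x_\mu\mathbf{d}x_\nu$, whose right-hand side is small for late $\nu$ by forward-Cauchyness; thus $z\mathbf{d}T_n$ is small, i.e. $Y_n\le^\mathbf{d}T_n$. Mutual $\le^\mathbf{d}$-cofinality then pins $\mathbf{d}Y_n$ to $\mathbf{d}T_n$ up to $O(2^{-n})$, because $y\le^\mathbf{d}x$ forces $\mathbf{d}x\le\mathbf{d}y$; hence $\mathbf{d}Y_n\to L$ uniformly.

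With $Y_n$ in hand, $\le^\mathbf{d}$-directedness makes $Y_n$ $\mathbf{d}$-final, so $\le^\mathbf{d}$-$\mathbf{d}$-$\max$-completeness yields $y_n=\mathbf{d}$-$\max Y_n$ and \autoref{supmax=} \eqref{max1dir} gives $\mathbf{d}y_n=\mathbf{d}Y_n$. Thus $\mathbf{d}y_n\to L$ uniformly. Since $\underline{\mathbf{d}}^\vee$ depends only on the lower functions and $y_m\underline{\mathbf{d}}y_n=\sup_w(w\mathbf{d}y_n-w\mathbf{d}y_m)_+\to0$ as $m,n\to\infty$ (both $\mathbf{d}y_n$ lying uniformly close to $L$), the sequence $(y_n)$ is $\underline{\mathbf{d}}^\vee$-pre-Cauchy, hence $\underline{\mathbf{d}}^\vee$-Cauchy by \autoref{symCauchy}; and $\mathbf{d}(y_n)=\liminf_n\mathbf{d}y_n=L=\mathbf{d}(x_\lambda)$, as required.

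The main obstacle is the construction of $Y_n$ in the second step: a common interpolated upper bound of two elements of $Y_n$ enlarges the $\overline{\mathbf{d}}$-radius, so a class defined by a single fixed tolerance is not closed under the directedness operation and the error accumulates. I expect this to require a recursive construction indexed by $\mathcal{F}(\Lambda)$ with geometrically decreasing radii, exactly as in the proof of \autoref{Cauchytodirected}, so that the accumulated $\overline{\mathbf{d}}$-error stays summable and both $\le^\mathbf{d}$-directedness and the two-sided cofinality survive in the limit. The remaining steps are then routine triangle-inequality estimates.
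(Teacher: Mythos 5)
Your plan is essentially the paper's proof: the paper likewise runs a recursion over $\mathcal{F}(\Lambda)$, using $\leq^{\mathcal{F}\mathbf{d}}\circ\overline{\mathbf{d}}\leq\mathcal{F}\mathbf{d}$ to interpolate $\leq^\mathbf{d}$-upper bounds above finite sets of previously constructed points with $\overline{\mathbf{d}}$-error controlled against sufficiently late net points, producing for each $n$ a $\leq^\mathbf{d}$-increasing family $(y^n_F)$ with $y^n_F\mathbf{d}x_{f(G)}<r_n$ and $x_{f(F)}\leq^\mathbf{d}y^n_G$; then $\leq^\mathbf{d}$-$\mathbf{d}$-$\max$-completeness yields $y^n=\mathbf{d}$-$\max\,y^n_F$, and the two-sided pinning of $\mathbf{d}y^n$ to $\mathbf{d}(x_\lambda)$ within $r_n$ gives both conclusions. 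Your derivation of $\underline{\mathbf{d}}^\vee$-Cauchyness from the uniform bounds on the lower functions is the same estimate the paper extracts via \eqref{max=>sup} (so $y^n=\underline{\mathbf{d}}$-$\sup y^n_F$) together with $y^m_F\mathbf{d}y^n<r_m$, and your uses of \eqref{max1dir} and \autoref{symCauchy} are sound.

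The one point to correct is in your final paragraph: the bookkeeping cannot be ``exactly as in \autoref{Cauchytodirected}.'' There the hypothesis is a $\precapprox$-condition, which genuinely contracts scales and permits radii $r_n\downarrow0$ with $\tfrac{\underline{\mathbf{d}}\circ\leq^{\mathbf{d}\mathcal{P}}}{\mathbf{d}\mathcal{P}}(2r_{n+1})<r_n$. Here the hypothesis is merely non-expansive (indeed the paper only uses the weaker condition \eqref{weakcond}), so within a single level the accumulated $\overline{\mathbf{d}}$-radius can never be driven down: each interpolation returns an error as large as its input, plus fresh slack from the net. Consequently the radii in the paper's recursion \emph{increase} toward a fixed ceiling, $r_n^m\uparrow r_n$ with $\tfrac{\leq^{\mathcal{F}\mathbf{d}}\circ\overline{\mathbf{d}}}{\mathcal{F}\mathbf{d}}(r_n^m)<r_n^{m+1}$, while the geometrically small quantities are the fresh slacks $\epsilon_n^m$ fed in via the lateness requirement $\sup_{f(F)\prec\lambda}x_{f(F)}\mathbf{d}x_\lambda<\min_{1\leq n<|F|}\epsilon_n^{|F|-n}$; the decrease to $0$ happens only across the outer index $n$. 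Your phrase ``accumulated error stays summable'' identifies the right mechanism, and under the stated hypothesis a plain summable-slack scheme does realize your $Y_n$, so this is a misattribution rather than a fatal gap; but the $\precapprox$-style shrinking recursion you point to would not go through as written.
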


\begin{proof}
The basic idea of the proof will be to replace a given $\mathbf{d}$-pre-Cauchy net by one indexed by $\mathcal{F}(\Lambda)$ and then further replace this by a $\leq^\mathbf{d}$-increasing net.  The resulting limit will still be off the mark by a small amount, so we actually have to consider countably many tails of $\mathcal{F}(\Lambda)$ and replace each of the corresponding subnets by $\leq^\mathbf{d}$-increasing nets.

First note $\mathbin{\leq^{\mathcal{F}\mathbf{d}}}\circ\mathbf{\overline{d}}\leq\mathcal{F}\mathbf{d}$ is equivalent to saying that
\[f=\tfrac{\mathbin{\leq^{\mathcal{F}\mathbf{d}}}\circ\mathbf{\overline{d}}}{\mathcal{F}\mathbf{d}}\in[0,\infty]^{[0,\infty]}\]
is below the identity function on $[0,\infty]$.  This, in turn, is equivalent to saying that the $f$-image of $[0,r)$ is contained in $[0,r)$, for all $r\in(0,\infty)$.  In fact, it suffices that there are arbitrarily small such $r$.
So we assume we have $r_n\downarrow0$ with $f[0,r_n)\subseteq[0,r_n)$, for all $n\in\mathbb{N}$.  Then, for each $n$, we have positive $r^m_n\uparrow r_n$ (i.e. $\lim_mr^m_n=r_n$) with $f(r^m_n)<r^{m+1}_n$, for all $m\in\mathbb{N}$.  Taking $f(r_n^0)=0$ below, set
\[\epsilon_n^m=\tfrac{1}{2}(r_n^m-f(r_n^{m-1})).\]

Again take $\mathbf{d}$-pre-Cauchy $(x_\lambda)\subseteq X$.  Again, if necessary, we can replace $(x_\lambda)$ with a $\mathbf{d}$-Cauchy net, by \autoref{Clim} \eqref{preCauchysub}, and the conclusion of the theorem will be preserved, by \autoref{Clim} \eqref{dover} (noting that, as $\mathbf{d}$ is a distance, any $\mathbf{d}$-pre-Cauchy net $(x_\lambda)$ is $\underline{\mathbf{d}}$-pre-Cauchy, by \eqref{dis}, so $\mathbf{d}x_\lambda$ converges hence any subnet also converges to the same limit).  Define $f:\mathcal{F}(\Lambda)\rightarrow\Lambda$ recursively so $f(\{\lambda\})=\lambda$, for all $\lambda\in\Lambda$, $f(E)\prec f(F)$, for all $F\in\mathcal{F}(\Lambda)$ with $|F|>1$ and all $E\subsetneqq F$, and
\[\sup_{f(F)\prec\lambda}x_{f(F)}\mathbf{d}x_\lambda<\min_{1\leq n<|F|}\epsilon_n^{|F|-n}.\]

For any $n\in\mathbb{N}$, let $\Lambda_n=\{F\in\mathcal{F}(\Lambda):|F|>n\}$ and define $(y^n_F)_{F\in\Lambda_n}$ recursively as follows.  For $|F|=n+1$, let $y^n_F=x_{f(F)}$ so if $F\subsetneqq G$ then
\[y^n_F\mathbf{d}x_{f(G)}<\epsilon_n^1<r_n^1.\]
For $|G|=n+2$, let $Y=\{y^n_F:F\subsetneqq G\text{ and }|F|=n+1\}$.  Now
\[Y(\mathbin{\leq^{\mathcal{F}\mathbf{d}}}\circ\mathbf{\overline{d}})x_{f(G)}\leq\tfrac{\mathbin{\leq^{\mathcal{F}\mathbf{d}}}\circ\mathbf{\overline{d}}}{\mathcal{F}\mathbf{d}}(Y\mathbf{d}x_{f(G)})\leq f(r_n^1)\]
so we can take $y^n_G$ with
\[Y\leq^\mathbf{d}y^n_G\quad\text{ and }\quad y^n_G\overline{\mathbf{d}}x_{f(G)}<f(r_n^1)+\epsilon^2_n.\]
As $x_{f(G)}\mathbf{d}x_{f(H)}<\epsilon^2_n$, whenever $G\subsetneqq H$ and $|G|=n+2$,
\[y^n_G\mathbf{d}x_{f(H)}\leq y^n_G\overline{\mathbf{d}}x_{f(G)}+x_{f(G)}\mathbf{d}x_{f(H)}<f(r_n^1)+2\epsilon^2_n=r_n^2.\]
Note that we also have
\[x_{f(G)}\mathbf{d}x_{f(H)}<\epsilon_n^2<r_n^2.\]
Thus if $|H|=n+3$ and $Z=\bigcup\{\{y^n_G,x_{f(G)}\}:G\subsetneqq H\text{ and }|G|=n+2\}$,
\[Z(\mathbin{\leq^{\mathcal{F}\mathbf{d}}}\circ\mathbf{\overline{d}})x_{f(H)}\leq\tfrac{\mathbin{\leq^{\mathcal{F}\mathbf{d}}}\circ\mathbf{\overline{d}}}{\mathcal{F}\mathbf{d}}(Z\mathbf{d}x_{f(H)})\leq f(r_n^2).\]
Thus we can take $y^n_H$ with $Z\leq^\mathbf{d}y^n_H$ and
\[y^n_H\overline{\mathbf{d}}x_{f(H)}<f(r_n^2)+\epsilon^3_n.\]
Continuing in this way we obtain $\leq^\mathbf{d}$-increasing $(y^n_F)$ with $y^n_F\mathbf{d}x_{f(G)}<r_n$ and $x_{f(F)}\leq^\mathbf{d}y^n_G$, for all $F\in\Lambda_n$ and $F\subsetneqq G$.

As $X$ is $\leq^\mathbf{d}$-$\mathbf{d}$-$\max$-complete, we can take $y^n=\mathbf{d}$-$\max_Fy^n_F$.  Thus $x_{f(F)}\leq^\mathbf{d}y^n$, for all $F\in\Lambda_n$.  As $(x_\lambda)$ is $\mathbf{d}$-Cauchy, \autoref{Clim} \eqref{dunder} implies that $x_\lambda\mathbf{d}$ converges.  As $(x_{f(F)})_{F\in\Lambda_n}$ is a subnet of $(x_\lambda)$, for each $n\in\mathbb{N}$ we have
\[\lim_\lambda x_\lambda\mathbf{d}y^n=\lim_{F\in\Lambda_n}x_F\mathbf{d}y^n=0.\]
Thus
\[\mathbf{d}(y^n)\leq\liminf_n\liminf_\lambda(\mathbf{d}x_\lambda+x_\lambda\mathbf{d}y^n)=\mathbf{d}(x_\lambda).\]
 $\mathbf{d}(y^n)\leq\mathbf{d}(x_\lambda)$.  Also, for any $m,n\in\mathbb{N}$ and all sufficiently large $H$, $G$ and $F$, specifically $H\supsetneqq G\supsetneqq F\in\Lambda_{m\vee n}$,
\[y^m_F\mathbf{d}y^n\leq y^m_F\mathbf{d}y^n_H\leq y^m_F\mathbf{d}x_{f(G)}<r_m.\]
By \eqref{max=>sup}, $y^m=\underline{\mathbf{d}}$-$\sup_F y^m_F$ so $y^m\underline{\mathbf{d}}y^n\leq\lim_Fy^m_F\underline{\mathbf{d}}y^n\leq r_m$ and hence $y^m\underline{\mathbf{d}}^\vee y^n\leq r_{m\wedge n}$.  As $r_n\rightarrow0$, this shows that $(y^n)$ is $\underline{\mathbf{d}}^\vee$-Cauchy.  Also, as $(x_\lambda)$ is $\mathbf{d}$-Cauchy, $\mathbf{d}x_\lambda$ converges, by \autoref{Clim} \eqref{dover}, so
\begin{align*}
\mathbf{d}(x_\lambda)&=\lim_\lambda\mathbf{d}x_\lambda\\
&=\lim_G\mathbf{d}x_{f(G)}\\
&\leq\liminf_n\liminf_F\liminf_G(\mathbf{d}y^n_F+y^n_F\mathbf{d}x_{f(G)})\\
&\leq\liminf_n(\mathbf{d}y^n+r_n)\qquad\text{by \eqref{maxdef}}\\
&=\mathbf{d}(y^n).\qedhere
\end{align*}
\end{proof}

Another natural interpolation condition involves the symmetrization:
\[\mathbf{d}^\vee\circ\mathbin{\leq^\mathbf{d}}\leq\mathbf{d}.\]
Yet again this is satisfied by $C_0(X)_+$ where $f\mathbf{d}g=\sup_{x\in X}(f(x)-g(x))_+$.  More interestingly, even in non-commutative C*-algebras we have the weaker uniform interpolation condition $\mathbf{d}^\vee\circ\mathbin{\leq^\mathbf{d}}\precapprox\mathbf{d}$ on the positive unit ball, where $a\mathbf{d}b=\|(a-b)_+\|$ (see \cite[Theorem 2.6]{BiceVignati2018}).  For the result itself, based on \cite[Theorem 3]{Bice2017}, it again suffices to consider an even weaker condition, this time involving a modification of $\mathbf{e}\ \circ\leq^{\mathbf{d}}$ defined by
\[\mathbf{e}\circ\Phi^\mathbf{d}=\sup_{n\in\mathbb{N}}(\mathbf{e}\circ n\mathbf{d})=\sup\limits_{\epsilon>0}(\mathbf{e}\circ\mathbin{<^\mathbf{d}_\epsilon}).\]
In particular, note $(\mathbf{e}\circ\Phi^\mathbf{d})\leq(\mathbf{e}\circ\mathbin{\leq^\mathbf{d}})$.  Also note that when $\mathbf{d}$ is a metric, $Y$ below must be a singleton set $\{x\}$ with $x_\lambda\rightarrow x$.  In this case, the result is really just saying that limits coincide for uniformly equivalent metrics.

\begin{thm}\label{ded}
If $\mathbf{d}$ and $\mathbf{e}$ are distances, $X$ is $\mathbf{e}_\circ$-complete, $\mathbf{e}\circ\Phi^{\overline{\mathbf{d}}}\precapprox\mathbf{d}$ and $\underline{\mathbf{d}},\overline{\mathbf{d}}^\mathrm{op}\precapprox\mathbf{e}$ then
\[(x_\lambda)\text{ is $\mathbf{d}$-pre-Cauchy}\qquad\Leftrightarrow\qquad\exists\text{ (necessarily $\mathbf{d}$-directed) }Y\equiv^\mathbf{d}(x_\lambda).\]
\end{thm}

\begin{proof}
Given $\mathbf{d}$-pre-Cauchy $(x_\lambda)$, we may again take a subnet indexed by $\mathcal{F}(\Lambda)$ if necessary and assume we have nets $(s_\lambda),(t_\lambda)\subseteq(0,\infty)$ such that
\begin{align}
\label{slambda}\sup_{\lambda\prec\delta}x_\lambda\mathbf{d}x_\delta&<s_\lambda\rightarrow0.\\
\label{tlambda}\tfrac{\mathbf{e}\circ\Phi^{\overline{\mathbf{d}}}}{\mathbf{d}}(s_\lambda)&<t_\lambda\rightarrow0.
\end{align}
For each $\lambda$, we define $\gamma_\lambda^n$ and $x_\lambda^n$ recursively so that
\begin{align*}
x_\lambda^n\overline{\mathbf{d}}x_{\gamma_\lambda^n}+\sup_{\gamma_\lambda^n\prec\delta}x_{\gamma_\lambda^n}\mathbf{d}x_\delta<s_{\gamma_\lambda^n}&<2^{1-n}t_\lambda.\\
\tfrac{\mathbf{e}\circ\Phi^{\overline{\mathbf{d}}}}{\mathbf{d}}(s_{\gamma_\lambda^n})&<2^{1-n}t_\lambda.\\
x^n_\lambda\mathbf{e}x^{n+1}_\lambda&<2^{1-n}t_\lambda.
\end{align*}
First set $\gamma_\lambda^1=\lambda$ and $x_\lambda^1=x_\lambda$.  For $n\in\mathbb{N}$, take $\gamma_\lambda^{n+1}\succ\gamma_\lambda^n$ with $\tfrac{\mathbf{e}\circ\Phi^{\overline{\mathbf{d}}}}{\mathbf{d}}(s_{\gamma_\lambda^{n+1}}),s_{\gamma_\lambda^{n+1}}<2^{-n}t_\lambda$.  As $x_\lambda^n\mathbf{d}x_{\gamma_\lambda^{n+1}}\leq x_\lambda^n\overline{\mathbf{d}}x_{\gamma_\lambda^n}+x_{\gamma_\lambda^n}\mathbf{d}x_{\gamma_\lambda^{n+1}}<s_{\gamma_\lambda^n}$,
\[x_\lambda^n(\mathbf{e}\circ\Phi^{\overline{\mathbf{d}}})x_{\gamma_\lambda^{n+1}}\leq\tfrac{\mathbf{e}\circ\Phi^{\overline{\mathbf{d}}}}{\mathbf{d}}(x_\lambda^n\mathbf{d}x_{\gamma_\lambda^{n+1}})\leq\tfrac{\mathbf{e}\circ\Phi^{\overline{\mathbf{d}}}}{\mathbf{d}}(s_{\gamma_\lambda^n})<2^{1-n}t_\lambda,\]
so we can take $x^{n+1}_\lambda$ with $x^n_\lambda\mathbf{e}x^{n+1}_\lambda<2^{1-n}t_\lambda$ and
\[x_\lambda^{n+1}\overline{\mathbf{d}}x_{\gamma_\lambda^{n+1}}<s_{\gamma_\lambda^{n+1}}-\sup\limits_{\gamma_\lambda^{n+1}\prec\delta}x_{\gamma_\lambda^{n+1}}\mathbf{d}x_\delta.\]
Note the right side above is positive by \eqref{slambda} (with $\gamma_\lambda^{n+1}$ in place of $\lambda$).  Thus the recursion may continue.

For each $\lambda$, $x^n_\lambda\mathbf{e}x^{n+1}_\lambda<2^{1-n}t_\lambda$ so $(x_\lambda^n)_{n\in\mathbb{N}}$ is $\mathbf{e}$-Cauchy.  As $X$ is $\mathbf{e}_\circ$-complete, we have $y_\lambda\in X$ with $\lim_nx_\lambda^n\mathbf{e}y_\lambda=0$, by \eqref{arrowc}, and hence $\lim_ny_\lambda\overline{\mathbf{d}}x_\lambda^n=0$, as $\overline{\mathbf{d}}^\mathrm{op}\precapprox\mathbf{e}$.  Now
\begin{align*}
\limsup_\delta y_\lambda\mathbf{d}x_\delta&\leq\liminf_n\limsup_\delta(y_\lambda\overline{\mathbf{d}}x_\lambda^n+x_\lambda^n\overline{\mathbf{d}}x_{\gamma_\lambda^n}+x_{\gamma_\lambda^n}\mathbf{d}x_\delta)\\
&\leq\liminf_n(y_\lambda\overline{\mathbf{d}}x_\lambda^n+s_{\gamma_\lambda^n})\\
&\leq\liminf_n(y_\lambda\overline{\mathbf{d}}x_\lambda^n+2^{1-n}t_\lambda)\\
&=0.
\end{align*}
So $Y\leq^\mathbf{d}(x_\lambda)$ for $Y=\{y_\lambda:\lambda\in\Lambda\}$.  As $x_\lambda=x^1_\lambda$ and $x^n_\lambda\mathbf{e}x^{n+1}_\lambda<2^{1-n}t_\lambda$, $x_\lambda\mathbf{e}y_\lambda\leq2t_\lambda\rightarrow0$.  Thus $x_\lambda\underline{\mathbf{d}}y_\lambda\rightarrow0$, as $\underline{\mathbf{d}}\precapprox\mathbf{e}$.  Now
\begin{align*}
x_\lambda\mathbf{d}Y&=\inf_{y\in Y}x_\lambda\mathbf{d}y\\
&\leq\limsup_\delta x_\lambda\mathbf{d}y_\delta\\
&\leq\limsup_\delta(x_\lambda\mathbf{d}x_\delta+x_\delta\underline{\mathbf{d}}y_\delta)\\
&=\limsup_\delta(x_\lambda\mathbf{d}x_\delta)\qquad\text{as }x_\delta\underline{\mathbf{d}}y_\delta\rightarrow0\\
&\rightarrow0\qquad\text{as $(x_\lambda)$ is $\mathbf{d}$-pre-Cauchy}.
\end{align*}
Thus $(x_\lambda)\leq^\mathbf{d}Y$ and hence $Y$ is $\mathbf{d}$-directed, by \eqref{x<Yddir}.
\end{proof}

Replacing $\Phi^{\overline{\mathbf{d}}}$ with $\Phi^\mathbf{d}$, we get $\leq^\mathbf{d}$-directed subsets from $\mathbf{d}$-pre-Cauchy sequences (rather than $\mathbf{d}$-directed subsets from $\mathbf{d}$-pre-Cauchy nets).  In fact, as the subset $Y$ is countable, it could even be replaced with a cofinal increasing sequence.  Indeed, this is how $Y$ is constructed in the proof, which is based on the argument given in \cite[Theorem 4.5]{Bice2017}.

\begin{thm}\label{ded2}
If $\mathbf{d}$ and $\mathbf{e}$ are distances, $X$ is $\mathbf{e}_\circ$-complete, $\mathbf{e}\circ\Phi^\mathbf{d}\precapprox\mathbf{d}$ and $\underline{\mathbf{d}},\overline{\mathbf{d}}^\mathrm{op}\precapprox\mathbf{e}$ then $\mathbf{e}\circ\Phi^\mathbf{d}=\mathbf{e}\circ\mathbin{\leq^\mathbf{d}}$ and
\[(x_n)_{n\in\mathbb{N}}\text{ is $\mathbf{d}$-pre-Cauchy}\qquad\Leftrightarrow\qquad\exists\text{ (countable) $\leq^\mathbf{d}$-directed }Y\equiv^\mathbf{d}(x_n).\]
\end{thm}

\begin{proof}
First we prove $\mathbf{e}\circ\Phi^\mathbf{d}=\mathbf{e}\circ\mathbin{\leq^\mathbf{d}}$.  For any $x,y\in X$ and $\epsilon>0$, take $\epsilon_n\downarrow0$ with $\frac{\mathbf{e}\circ\Phi^\mathbf{d}}{\mathbf{d}}(\epsilon_n)<2^{-n}\epsilon$, for all $n\in\mathbb{N}$.  Now take $z_1\in X$ with $x\mathbf{e}z_1<x(\mathbf{e}\circ\Phi^\mathbf{d})y+\epsilon$ and $z_1\mathbf{d}y<\epsilon_1$.  Thus
\[z_1(\mathbf{e}\circ\Phi^\mathbf{d})y\leq\tfrac{\mathbf{e}\circ\Phi^\mathbf{d}}{\mathbf{d}}(z_1\mathbf{d}y)\leq\tfrac{\mathbf{e}\circ\Phi^\mathbf{d}}{\mathbf{d}}(\epsilon_1)<\tfrac{1}{2}\epsilon\]
and we can take $z_2\in X$ such that $z_1\mathbf{e}z_2<\frac{1}{2}\epsilon$ and $z_2\mathbf{d}y<\epsilon_2$.  Continuing in this way we obtain a sequence $(z_n)\subseteq X$ such that, for all $n\in\mathbb{N}$,
\[z_n\mathbf{e}z_{n+1}\leq2^{-n}\epsilon\qquad\text{and}\qquad z_n\mathbf{d}y<\epsilon_n\rightarrow0.\]
As $\mathbf{e}$ is a distance and $X$ is $\mathbf{e}_\circ$-complete, \eqref{arrowc} yields $z_n\mathbf{e}z\rightarrow0$, for some $z\in X$, so
\[x\mathbf{e}z\leq x\mathbf{e}z_1+z_1\mathbf{e}z\leq x(\mathbf{e}\circ\Phi^\mathbf{d})y+2\epsilon.\]
Also $z\mathbf{d}y\leq z\overline{\mathbf{d}}z_n+z_n\mathbf{d}y\leq z\overline{\mathbf{d}}z_n+\epsilon_n\rightarrow0$, as $\overline{\mathbf{d}}^\mathrm{op}\precapprox\mathbf{e}$ and $z_n\mathbf{e}z\rightarrow0$, so $z\leq^\mathbf{d}y$. As $\epsilon>0$ was arbitrary, $(\mathbf{e}\circ\mathbin{\leq^\mathbf{d}})\leq(\mathbf{e}\circ\Phi^\mathbf{d})$.  The reverse inequality is immediate.

Now take $(s^m_n),(t^m_n)\subseteq(0,\infty)$ such that, for all $m,n\in\mathbb{N}$,
\[s^m_n<2^{-m-n},\quad\tfrac{\underline{\mathbf{d}}}{\mathbf{e}}(s^m_{n+1})<t^m_n\quad\text{and}\quad\tfrac{\mathbf{e}\circ\mathbin{\leq^\mathbf{d}}}{\mathbf{d}}(t^m_n)<s^{m+1}_n\]
(define $(s^m_1)_{m\in\mathbb{N}}$ first then $(t^m_1)_{m\in\mathbb{N}}$, $(s^m_2)_{m\in\mathbb{N}}$ etc., also note that the top of $\tfrac{\underline{\mathbf{d}}}{\mathbf{e}}$ above is $\underline{\mathbf{d}}$, not $\mathbf{d}$).  Take a subsequence of the given $\mathbf{d}$-pre-Cauchy $(x_n)$ with $x_n\mathbf{d}x_{n+1}<t^1_n$, for all $n$, and define $y^m_n$ with $y^m_n\mathbf{d}y^m_{n+1}<t^m_n$, for all $m$ and $n$, recursively as follows.  First let $y^1_n=x_n$, for all $n$.  Assume $y^m_n$ is defined for all $n$ and fixed $m$.  For each $n$, we can take $y^{m+1}_n\leq^\mathbf{d}y^m_{n+1}$ with $y^m_n\mathbf{e}y^{m+1}_n<s^{m+1}_n$ as
\[y^m_n(\mathbf{e}\circ\mathbin{\leq^\mathbf{d}})y^m_{n+1}\leq\tfrac{\mathbf{e}\circ\mathbin{\leq^\mathbf{d}}}{\mathbf{d}}(y^m_n\mathbf{d}y^m_{n+1})\leq\tfrac{\mathbf{e}\circ\mathbin{\leq^\mathbf{d}}}{\mathbf{d}}(t^m_n)<s^{m+1}_n.\]
Then the recursion may continue because
\begin{align*}
y^{m+1}_n\mathbf{d}y^{m+1}_{n+1}&\leq y^{m+1}_n\mathbf{d}y^m_{n+1}+y^m_{n+1}\underline{\mathbf{d}}y^{m+1}_{n+1}=y^m_{n+1}\underline{\mathbf{d}}y^{m+1}_{n+1}\\
&\leq\tfrac{\underline{\mathbf{d}}}{\mathbf{e}}(y^m_{n+1}\mathbf{e}y^{m+1}_{n+1})\leq\tfrac{\underline{\mathbf{d}}}{\mathbf{e}}(s^{m+1}_{n+1})<t^{m+1}_n
\end{align*}

For all $m,n\in\mathbb{N}$, $y^m_n\mathbf{e}y^{m+1}_n<s_n^{m+1}<2^{-m-1-n}<2^{-m-n}$ so, as $X$ is $\mathbf{e}_\circ$-complete, we have $y_n\in X$ with $\lim_my^m_n\mathbf{e}y_n=0$.  As $\underline{\mathbf{d}},\overline{\mathbf{d}}^\mathrm{op}\precapprox\mathbf{e}$ and $y^{m+1}_n\leq^\mathbf{d}y^m_{n+1}$,
\[y_n\mathbf{d}y_{n+1}\leq\liminf_m(y_n\overline{\mathbf{d}}y^{m+1}_n+y^{m+1}_n\mathbf{d}y^m_{n+1}+y^m_{n+1}\underline{\mathbf{d}}y_{n+1})=0,\]
i.e. $y_n\leq^\mathbf{d}y_{n+1}$ so $Y=\{y_n:n\in\mathbb{N}\}$ is $\leq^\mathbf{d}$-directed.  Also, again using the fact that $\mathbf{e}$ is a distance, we have
\[x_n\mathbf{e}y_n\leq\liminf_m(x_n\mathbf{e}y^m_n+y^m_n\mathbf{e}y_n)<{\textstyle\sum\limits_{m=2}^\infty}s^m_n<{\textstyle\sum\limits_{m=2}^\infty}2^{-m-n}<2^{-n}\rightarrow0.\]
This together with $\underline{\mathbf{d}}\precapprox\mathbf{e}$ and the fact that $(x_n)$ is $\mathbf{d}$-pre-Cauchy yields
\[x_n\mathbf{d}Y=\inf_mx_n\mathbf{d}y_m\leq\liminf_m(x_n\mathbf{d}x_m+x_m\underline{\mathbf{d}}y_m)=\liminf_m x_n\mathbf{d}x_m\rightarrow0,\]
so $(x_n)\leq^\mathbf{d}Y$.  Likewise, using $\overline{\mathbf{d}}^\mathrm{op}\precapprox\mathbf{e}$ instead and the fact $\mathbf{d}$ is a distance,
\begin{align*}
\limsup_my_n\mathbf{d}x_m&\leq\liminf_l\limsup_m(y_n\mathbf{d}y_l+y_l\mathbf{d}x_m)\\
&\leq\liminf_l\limsup_m(y_n\mathbf{d}y_l+y_l\overline{\mathbf{d}}x_l+x_l\mathbf{d}x_m)=0
\end{align*}
so $Y\leq^\mathbf{d}(x_n)$.
\end{proof}

As promised, we can now show that $\mathbf{d}^\bullet_\circ$-completeness follows from $\mathbf{d}$-$\max$-completeness (or even slightly weaker notions) under various additional interpolation, completeness and separability conditions.

\begin{cor}\label{Sc} $X$ is $\mathbf{d}^\bullet_\circ$-complete if $\mathbf{d}$ and $\mathbf{e}$ are distances satisfying any of the following ($\mathbf{d}$-$\mathcal{R}$-$\mathcal{T}$-complete means $\mathbf{d}$-$\mathcal{R}$-complete and $\mathcal{T}$-complete).
\begin{align}
\label{Sc1}\underline{\mathbf{d}}\circ\mathbin{\leq^{\mathbf{d}\mathcal{P}}}\,&\precapprox\,\mathbf{d}\mathcal{P}&\text{and}&&X\text{ is $<^\mathbf{d}$-$(\mathbf{d}$-$\max)$-complete.}\hspace{14pt}&\\
\label{Sc2}\mathbin{\leq^{\mathcal{F}\mathbf{d}}}\circ\mathbf{\overline{d}}\,&\leq\,\mathcal{F}\mathbf{d}&\text{and}&&X\text{ is $\leq^\mathbf{d}$-$(\mathbf{d}$-$\max)$-$\underline{\mathbf{d}}^\vee_\circ\!$-complete.}&\\
\label{Sc3}\mathbf{e}\circ\Phi^{\overline{\mathbf{d}}}\,&\precapprox\,\mathbf{d},&\underline{\mathbf{d}},\overline{\mathbf{d}}^\mathrm{op}&\precapprox\mathbf{e}&\&\ X\text{ is $\mathbf{d}$-$(\mathbf{d}$-$\max)$-$\mathbf{e}_\circ$-complete.}&\\
\label{Sc4}\mathbf{e}\circ\Phi^\mathbf{d}\,&\precapprox\,\mathbf{d},&\underline{\mathbf{d}},\overline{\mathbf{d}}^\mathrm{op}&\precapprox\mathbf{e},&X\text{ is $\leq^\mathbf{d}$-$(\mathbf{d}$-$\max)$-$\mathbf{e}_\circ$-complete}\hspace{4pt}&\\
\nonumber&&&&\text{and $\overline{\mathbf{d}}^\bullet_\bullet$-separable.}&
\end{align}
\end{cor}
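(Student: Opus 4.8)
The plan is to treat all four cases through a single reduction. Since every $\mathbf{d}$-Cauchy net $(x_\lambda)$ is $\mathbf{d}$-pre-Cauchy, hence $\underline{\mathbf{d}}$-pre-Cauchy (as $\mathbf{d}$ is a distance and $\underline{\mathbf{d}}\leq\mathbf{d}$), \autoref{dlimits} \eqref{bac} tells us that a point $x$ is a $\mathbf{d}^\bullet_\circ$-limit of $(x_\lambda)$, i.e. $x_\lambda\barrowc x$, exactly when $\mathbf{d}(x_\lambda)=\mathbf{d}x$. So in each case it suffices to produce such an $x$. I would first isolate one reusable computation: whenever $Y\leq^\mathbf{d}(x_\lambda)\leq^\mathbf{d}Y$, the triangle inequality forces $\mathbf{d}(x_\lambda)=\mathbf{d}Y$ — for each $z$, $z\mathbf{d}x_\lambda\leq z\mathbf{d}y+y\mathbf{d}x_\lambda$ with $y\mathbf{d}x_\lambda\to0$ gives $z\mathbf{d}(x_\lambda)\leq z\mathbf{d}Y$, while $z\mathbf{d}Y\leq z\mathbf{d}x_\lambda+x_\lambda\mathbf{d}Y$ with $x_\lambda\mathbf{d}Y\to0$ gives the reverse. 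Moreover any such sandwiching $Y$ is $\mathbf{d}$-final (take singletons in the directedness), so if $x=\mathbf{d}$-$\max Y$ then $\mathbf{d}Y=\mathbf{d}x$ by \eqref{max1dir}, whence $\mathbf{d}(x_\lambda)=\mathbf{d}x$ and $x_\lambda\barrowc x$. Thus the corollary reduces to exhibiting, in each case, a suitably directed sandwiching set together with a $\mathbf{d}$-max for it.

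Cases \eqref{Sc1} and \eqref{Sc3} are then immediate. For \eqref{Sc1}, \autoref{Cauchytodirected} turns $(x_\lambda)$ into a $<^\mathbf{d}$-directed $Y\leq^\mathbf{d}(x_\lambda)\leq^\mathbf{d}Y$, and $<^\mathbf{d}$-$\mathbf{d}$-$\max$-completeness supplies $x=\mathbf{d}$-$\max Y$; for \eqref{Sc3}, \autoref{ded} produces a $\mathbf{d}$-directed such $Y$ and $\mathbf{d}$-$\mathbf{d}$-$\max$-completeness supplies $x=\mathbf{d}$-$\max Y$. In both cases the reusable computation closes the argument.

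For \eqref{Sc2} I would instead invoke \autoref{CDS} (its hypotheses $\leq^\mathbf{d}$-$\mathbf{d}$-$\max$-completeness and $\mathbin{\leq^{\mathcal{F}\mathbf{d}}}\circ\overline{\mathbf{d}}\leq\mathcal{F}\mathbf{d}$ are among those listed) to replace $(x_\lambda)$ by a $\underline{\mathbf{d}}^\vee$-Cauchy sequence $(y_n)$ with $\mathbf{d}(y_n)=\mathbf{d}(x_\lambda)$. As $\underline{\mathbf{d}}^\vee$ is a symmetric distance, $\underline{\mathbf{d}}^\vee_\circ$-completeness together with \eqref{arrowc} yields $x$ with $y_n\underline{\mathbf{d}}^\vee x\to0$, i.e. both $y_n\underline{\mathbf{d}}x\to0$ and $x\underline{\mathbf{d}}y_n\to0$. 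Feeding these into $\mathbf{d}=\mathbf{d}\circ\underline{\mathbf{d}}$ (\autoref{hemiprop}) in turn gives $\mathbf{d}x\leq\mathbf{d}(y_n)$ and $\mathbf{d}(y_n)\leq\mathbf{d}x$, so $\mathbf{d}x=\mathbf{d}(y_n)=\mathbf{d}(x_\lambda)$; here $\overline{\mathbf{d}}\leq\underline{\mathbf{d}}$ serves to guarantee $\overline{\mathbf{d}}^\vee\leq\underline{\mathbf{d}}^\vee$, so that a single $\underline{\mathbf{d}}^\vee$-limit controls both reflexivisations at once.

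Case \eqref{Sc4} is where I expect the real work, since only $\leq^\mathbf{d}$-$\mathbf{d}$-$\max$-completeness is available, yet \autoref{ded2} delivers $\leq^\mathbf{d}$-directed sets only from \emph{sequences}. Because $\overline{\mathbf{d}}\leq\mathbf{d}$ gives $\Phi^{\overline{\mathbf{d}}}\subseteq\Phi^\mathbf{d}$, the stronger hypothesis $\mathbf{e}\circ\Phi^\mathbf{d}\precapprox\mathbf{d}$ implies $\mathbf{e}\circ\Phi^{\overline{\mathbf{d}}}\precapprox\mathbf{d}$, so \autoref{ded} applies and yields a $\mathbf{d}$-directed $Y\leq^\mathbf{d}(x_\lambda)\leq^\mathbf{d}Y$. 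Using $\overline{\mathbf{d}}^\bullet_\bullet$-separability, \eqref{sepdir} then extracts a sequence $(z_n)$ with $(z_n)\leq^\mathbf{d}Y\leq^\mathbf{d}(z_n)$, which is $\mathbf{d}$-pre-Cauchy by \eqref{x<YdpC}; \autoref{ded2} converts it into a $\leq^\mathbf{d}$-directed $W\leq^\mathbf{d}(z_n)\leq^\mathbf{d}W$, and $\leq^\mathbf{d}$-$\mathbf{d}$-$\max$-completeness gives $x=\mathbf{d}$-$\max W$. Applying the reusable computation along the chain $(x_\lambda),Y,(z_n),W$ yields $\mathbf{d}(x_\lambda)=\mathbf{d}Y=\mathbf{d}(z_n)=\mathbf{d}W=\mathbf{d}x$, hence $x_\lambda\barrowc x$. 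The main obstacle is keeping this four-stage reduction coherent: at each passage one must check that the two-sided $\leq^\mathbf{d}$-sandwiching is preserved, so that $\mathbf{d}(\cdot)$ is transported unchanged and the final $\mathbf{d}$-max of $W$ is genuinely a $\mathbf{d}^\bullet_\circ$-limit of the original net.
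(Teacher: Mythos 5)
Your proposal is correct and follows the paper's own proof essentially step for step: cases \eqref{Sc1}, \eqref{Sc3} and \eqref{Sc4} use exactly the paper's chain (\autoref{Cauchytodirected}, \autoref{ded}, and for \eqref{Sc4} additionally \eqref{sepdir} and \autoref{ded2}, including the implicit step $\mathbf{e}\circ\Phi^{\overline{\mathbf{d}}}\leq\mathbf{e}\circ\Phi^{\mathbf{d}}$ which you rightly make explicit) followed by the relevant $\max$-completeness, and your ``reusable computation'' is precisely the inline argument the paper performs to get $\mathbf{d}(x_\lambda)=\mathbf{d}Y=\mathbf{d}x$ and conclude $x_\lambda\barrowc x$ via \eqref{bac}. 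In \eqref{Sc2} your endgame is a minor (correct) variant: you obtain $\mathbf{d}x=\mathbf{d}(y_n)$ in one stroke from $\mathbf{d}=\mathbf{d}\circ\underline{\mathbf{d}}$ and $y_n\underline{\mathbf{d}}^\vee x\rightarrow0$, where the paper separately verifies $x_\lambda\arrowc x$ and $x_\lambda\barrow x$ using $\overline{\mathbf{d}}\leq\underline{\mathbf{d}}$; note that your own computation never actually uses $\overline{\mathbf{d}}\leq\underline{\mathbf{d}}$ (your side remark about $\overline{\mathbf{d}}^\vee\leq\underline{\mathbf{d}}^\vee$ is superfluous), which is harmless since that hypothesis is given.
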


\begin{proof}
Take $\mathbf{d}$-Cauchy $(x_\lambda)$.
\begin{itemize}
\item[\eqref{Sc1}] By \autoref{Cauchytodirected}, we have $<^\mathbf{d}$-directed $Y$ such that $Y\equiv^\mathbf{d}(x_\lambda)$ and hence $\mathbf{d}Y=\mathbf{d}(x_\lambda)$, by \eqref{dis} and \autoref{Yxlam}.  By $<^\mathbf{d}$-$(\mathbf{d}$-$\max)$-completeness and \eqref{max1dir}, we have $x\in X$ with $\mathbf{d}x=\mathbf{d}Y=\mathbf{d}(x_\lambda)$ so $x_\lambda\barrowc x$, by \eqref{bac}.  Thus $X$ is $\mathbf{d}^\bullet_\circ$-complete.

\item[\eqref{Sc2}] By \autoref{CDS}, we have $\underline{\mathbf{d}}^\vee$-Cauchy $(y_n)$ with $\mathbf{d}(x_\lambda)=\mathbf{d}(y_n)$.  By $\underline{\mathbf{d}}^\vee_\circ$-completeness and \eqref{arrowc}, we have $x\in X$ with $y_n\underline{\mathbf{d}}^\vee x\rightarrow0$ and hence $y_n\barrowc x$, by \eqref{abvee}.  Thus $\mathbf{d}x=\mathbf{d}(y_n)=\mathbf{d}(x_\lambda)$ and hence $x_\lambda\barrowc x$, by \eqref{bac}.  Thus $X$ is $\mathbf{d}^\bullet_\circ$-complete.

\item[\eqref{Sc3}] By \autoref{ded}, we have $\mathbf{d}$-directed $Y\equiv^\mathbf{d}(x_\lambda)$ and hence $\mathbf{d}Y=\mathbf{d}(x_\lambda)$, by \eqref{dis} and \autoref{Yxlam}.  By $\mathbf{d}$-$\max$-completeness and \eqref{max1dir}, we have $x\in X$ with $\mathbf{d}x=\mathbf{d}Y=\mathbf{d}(x_\lambda)$ so $x_\lambda\barrowc x$, by \eqref{baclim}.  Thus $X$ is $\mathbf{d}^\bullet_\circ$-complete.

\item[\eqref{Sc4}] By \autoref{ded}, we have $\mathbf{d}$-directed $Y\equiv^\mathbf{d}(x_\lambda)$.  By \eqref{sepdir}, we have $(x'_n)_{n\in\mathbb{N}}\equiv^\mathbf{d}Y$.  By \autoref{ded2}, we have $\leq^\mathbf{d}$-directed $Y'\equiv^\mathbf{d}(x'_n)$ and hence $\mathbf{d}Y'=\mathbf{d}(x_\lambda)$, by \eqref{dis} and \autoref{Yxlam}.  By $\leq^\mathbf{d}$-$(\mathbf{d}$-$\max)$-completeness, we have $x\in X$ with $\mathbf{d}x=\mathbf{d}Y'=\mathbf{d}(x_\lambda)$, i.e. $x_\lambda\barrowc x$.  Thus $X$ is $\mathbf{d}^\bullet_\circ$-complete.\qedhere
\end{itemize}
\end{proof}

\newpage

\bibliography{maths}{}
\bibliographystyle{alphaurl}

\end{document}